\documentclass[a4paper]{amsart}
\usepackage{amssymb}
\usepackage{amsmath}
\usepackage{amscd}
\usepackage[all]{xy}

\newtheorem{theorem}{Theorem}
\newtheorem{lemma}{Lemma}
\newtheorem{proposition}{Proposition}
\newtheorem{corollary}{Corollary}

\theoremstyle{definition}
\newtheorem{definition}{Definition}
\newtheorem{remark}{Remark}
\newtheorem{example}{Example}

\newcommand{\p}{{\mathbb P}}

\newcommand{\n}{{\mathbb N}}
\newcommand{\q}{{\mathbb Q}}
\renewcommand{\c}{{\mathbb C}}

\newcommand{\g}{{\mathbb G}}
\newcommand{\z}{{\mathbb Z}}

\renewcommand{\O}{{\mathcal O}}

\newcommand{\da}{{\dashrightarrow}}

\newcommand{\Sec}{\operatorname{Sec}}
\newcommand{\pic}{\operatorname{Pic}}
\newcommand{\length}{\operatorname{length}}
\newcommand{\rk}{\operatorname{rank}}
\newcommand{\sing}{\operatorname{Sing}}

\begin{document}
\title{Special birational transformations of projective spaces}
\author[Alberto Alzati \and Jos\'{e} Carlos Sierra]{Alberto Alzati* \and Jos\'{e} Carlos Sierra**}
\address{Dipartimento di Matematica, Universit\`a degli Studi di Milano\\
via Cesare Saldini 50, 20133 Milano, Italy}
\email{alberto.alzati@unimi.it}

\address{Instituto de Ciencias Matem\'aticas (ICMAT), Consejo Superior de Investigaciones Cient\'{\i}ficas (CSIC), Campus de
Cantoblanco, 28049 Madrid, Spain}
\email{jcsierra@icmat.es}

\thanks{* This work is within the framework of the national research project
``Geometria delle Variet\`a Algebriche" PRIN 2010 of MIUR}

\thanks {** Research supported by the ``Ram\'on y Cajal" contract
RYC-2009-04999, the project MTM2009-06964 of MICINN and the ICMAT ``Severo Ochoa" project SEV-2011-0087 of MINECO}

%\date{\today}
\subjclass[2010]{Primary 14E05; Secondary 14N05}
%\keywords{}
\begin{abstract}
Extending some results of Crauder and Katz, and Ein and Shepherd-Barron on special Cremona transformations, we study birational transformations of $\p^r$ onto a prime Fano manifold such that the base locus $X\subset\p^r$ is smooth irreducible and reduced. The main results are a complete classification when $X$ has either dimension $1$ and $2$, or codimension $2$. Partial results are also obtained when $X$ has dimension $3$.
\end{abstract}
\maketitle

\section{Introduction}

In this paper we study birational transformations $\Phi:\p^r\da Z$ with smooth irreducible and reduced base locus $X\subset\p^r$ from the complex projective space onto a prime Fano manifold $Z$, thus extending the study of the classical special Cremona transformations initiated in \cite{s-t1}, \cite{s-t2} and \cite{s-t3}, and more recently and systematically revisited in \cite{c-k} (see also \cite{katz} and \cite{h-k-s}), \cite{e-sb} and \cite{c-k2}. As there are no small contractions involved, these birational transformations are among the most elementary and special links of type II between Mori fibre spaces (in the sense of the Sarkisov program), so maybe they also deserve some attention from this point of view. The main results of the paper are the following:

\begin{theorem}
There are 8 (resp. 13) types of birational transformations of $\p^r$ with smooth irreducible and reduced base locus of dimension $1$ (resp. $2$) onto a prime Fano manifold.
\end{theorem}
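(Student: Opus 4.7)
The plan is to resolve $\Phi$ by blowing up the base locus: let $\sigma: Y \to \p^r$ denote the blowup along $X$, with exceptional divisor $E$, so that $\Phi = \tau \circ \sigma^{-1}$ for a birational morphism $\tau: Y \to Z$. Setting $H = \sigma^*\mathcal{O}_{\p^r}(1)$, the ``smooth and reduced base locus'' hypothesis forces $\Phi$ to be given by a linear system of the form $|aH - E|$ for some integer $a \geq 2$. Since $\rho(Y) = 2$ and $\rho(Z) = 1$, the cone of curves $\overline{NE}(Y)$ has exactly two extremal rays: $\sigma$ contracts one, and $\tau$ must contract the other. Hence $\tau$ is an elementary Mori contraction, and if $\ell$ generates $\pic(Z)$ then $\tau^*\ell$ is a positive multiple of $aH - E$.

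The first task is to derive numerical constraints on the tuple $(a, r, n, d, \ldots)$, where $n = \dim X$ and $d = \deg X$. Expanding $(aH-E)^r$ in terms of the Segre classes of $N_{X/\p^r}$ yields a formula for $\deg Z$ in these invariants. The canonical bundle formula $K_Y = -(r+1)H + (r-n-1)E$ together with the Fano condition $-K_Z$ ample gives a second relation (the canonical coefficient must lie in a prescribed range), while birationality of $\tau$ and the structure of its exceptional locus impose further equalities between intersection numbers. For fixed $n$, combining these with Castelnuovo-type bounds on the invariants of $X \subset \p^r$, one reduces the problem to a finite list of admissible numerical types.

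Once the finite list is in hand, it remains to (i) show that each admissible type is effectively realized by some birational $\Phi$ onto a prime Fano $Z$, and (ii) identify $Z$ up to isomorphism. For $n = 1$ the admissible $(a,r,d,g)$ are very few and each corresponds to a classical construction---rational or elliptic normal curves, linear or quadric projections, or inverses of projections from scrolls and Grassmannians---yielding the eight announced types, with $Z$ typically $\p^r$, a smooth quadric, a Grassmannian such as $G(1,4)$, or one of a few other prime Fano manifolds. For $n = 2$ the same strategy produces the thirteen types, involving Veronese surfaces, scrolls, Del Pezzo surfaces, and similar classical configurations. The hardest step is completeness: one must exclude every spurious numerical candidate either by \emph{ad hoc} geometric obstructions (via the second fundamental form, tangential projections, or dimension counts for secant varieties) or by exhibiting an explicit model, and the bulk of the technical work should lie in this verification.
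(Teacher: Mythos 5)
Your outline reproduces the paper's overall architecture (blow up $X$, do intersection theory on the blow-up, reduce to a finite numerical list, then classify $X$ and identify $Z$), but as written the middle step would not close, and the missing ingredients are precisely the ones that do the work. First, you never introduce the \emph{type of the inverse map}: besides $H_Z=aH-E$ one has $H=bH_Z-E_Z$ for an integer $b\geq 1$, where $E_Z$ is the preimage of the base locus of $\Psi=\Phi^{-1}$. Writing $K_W$ in both bases of $\pic(W)$ yields the two relations $i=(r+1)b-(r-n-1)(ab-1)$ and $r+1=ia-(r-m-1)(ab-1)$, which already pin down $(r,a,b,i,m)$ to a handful of cases for each $n$ \emph{before} any invariant of $X$ enters. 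Moreover, once $i$ is known the coindex $r+1-i$ of $Z$ is at most $3$ in almost all cases, and it is the classification of Del Pezzo and Mukai manifolds (Fujita, Mukai) that supplies the finitely many admissible values of $z=\deg Z$; it is this, not a Castelnuovo-type bound on $X$, that lets one solve the fundamental formulae for $d$ and $g$. Second, for $n=2$ the three relations obtained from $H_Z^r$, $H_Z^{r-1}\cdot H$ and $H_Z^{r-2}\cdot H^2$ involve $d$, $g$, $K^2$, $\chi$, $z$ and $e$, so they are not enough equations; the paper closes the system with the double point formula and Le Barz's multisecant formulae combined with the vanishings $N_{3,6}=N_{4,4}=N_{5,4}=0$, which hold because a line meeting $X$ in more than $a$ points must lie in $X$ when $X$ is cut out by forms of degree $a$ (with care for scrolls, where these formulae fail). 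Without these extra equations your finite list does not materialize.

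A further point your plan glosses over: several numerical candidates are realized by a perfectly good smooth $X$ and a genuine birational map $\Phi$, yet the image $Z$ turns out to be \emph{singular} (for instance $X$ a complete intersection $(2,2)$ in $\p^3$ with $Z$ a singular $(2,2)$ in $\p^6$, or a Del Pezzo surface in $\p^6$ with $Z$ a singular $(2,2)$ in $\p^8$). Discarding these requires actually computing $Z$ from a resolution of $\mathcal I_X$ (via liaison, explicit syzygies, or the vector-bundle constructions of the last section), not merely obstructing or exhibiting $X$. It is only after this sieve that the counts $8$ and $13$ come out.
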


This result is explicitly stated in Theorems \ref{thm:n=1} and \ref{thm:n=2}, and it extends the main results of \cite{c-k} (see \cite[Theorems 2.2 and 3.3]{c-k}). On the other hand, when the base locus has dimension $3$ the picture is much more difficult. Note that even the case of special Cremona transformations was left open for $r\in\{6,8\}$ (see \cite[Corollary 1]{c-k2}) and, to the best of the authors' knowledge, it still remains. In our setting, we are able to obtain the classification in many simple cases (see Theorem \ref{thm:n=3 easy}) and we also get the following more interesting result in Theorem \ref{thm:n=3 r=5}:

\begin{theorem}
There are 7 types of birational transformations of $\p^5$ with smooth irreducible and reduced base locus of dimension $3$ onto a prime Fano mani\-fold.
In particular, we construct a new Fano manifold of coindex $4$ and degree $21$.
\end{theorem}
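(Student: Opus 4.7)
The plan is to specialize the codimension-two framework developed earlier in the paper to the case $r=5$, $n=3$, obtaining a short list of numerical candidates, and then to identify or construct the prime Fano target $Z$ in each case. I would begin by resolving $\Phi$ via the blow-up $\tau:Y\to\p^5$ along $X$, with exceptional divisor $E$ and pulled-back hyperplane class $H$, so that $\Phi$ is induced by a base-point free linear system $|\sigma H-\mu E|$ defining a morphism $\varphi:Y\to Z$. Since $\rho(\p^5)=\rho(Z)=1$ and $\rho(Y)=2$, both $\tau$ and $\varphi$ are divisorial contractions; hence there is a unique irreducible divisor $D\subset Y$, distinct from $E$, contracted by $\varphi$, and the classes $\varphi_{*}H$, $\varphi_{*}E$ on $Z$ are numerically proportional. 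Combined with the intersection numbers on $Y$ computed via the Segre classes of $X\subset\p^5$, this yields a system of equations in $\sigma$, $\mu$, the degree $d$, the sectional genus $g$, and $\chi(\O_X)$.

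For threefolds of codimension two in $\p^5$, the Lefschetz theorem gives $\pic(X)\simeq\z$ outside scroll cases, and the double-point formula together with adjunction restricts the Hilbert polynomial severely. Feeding the standard Castelnuovo-type upper bounds for $g$ in terms of $d$ into the numerical relation imposed by $\rho(Z)=1$, and using $\mu\le\sigma$ together with $\dim Z=5$, cuts the possibilities for the quadruple $(d,g,\sigma,\mu)$ down to finitely many. For each surviving numerical type I would identify the candidate $X$, typically as a scroll, a complete intersection, or a known determinantal threefold, and then compute the invariants of $Z$: its degree from $(\sigma H-\mu E)^{5}$ on $Y$, and its Fano index by pushing $K_Y=\tau^{*}K_{\p^5}+E$ forward through $\varphi$ and comparing with the ample generator of $\pic(Z)$. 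For six of the seven entries the resulting $Z$ is a classical prime Fano fivefold, such as $\p^5$ itself, a smooth quadric, a del Pezzo fivefold, or a linear section of a Grassmannian, and $\Phi$ can be described explicitly.

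The heart of the theorem, and the main obstacle, is the seventh case, which must yield a prime Fano fivefold of coindex $4$ and degree $21$ not appearing in the existing classifications. Here I would argue constructively: from the numerical data I would pin down $X$ as an explicit smooth threefold of codimension two in $\p^5$, most plausibly via a Pfaffian or determinantal construction or as the degeneracy locus of a map between suitable vector bundles on $\p^5$. One then checks that $|\sigma H-\mu E|$ is base-point free and that the induced morphism $\varphi$ is injective away from a single divisor $D\subset Y$. Primality of $Z$ follows because only two independent divisor classes exist on $Y$ and exactly one is contracted by $\varphi$, so $\pic(Z)\simeq\z$. The degree $21$ is computed directly as $(\sigma H-\mu E)^{5}$, and the coindex $4$ from the proportionality between $-K_Y-D$ and $\sigma H-\mu E$ modulo the contracted class. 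The most delicate steps are verifying the smoothness of $Z$ and certifying that it is genuinely new, which is done by comparing the invariants $(\deg Z,\,\textrm{coindex})=(21,4)$ against the known lists of prime Fano fivefolds and checking that no existing entry matches.
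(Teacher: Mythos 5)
Your overall architecture agrees with the paper's: blow up $\p^5$ along $X$, exploit $\rho(W)=2$ and the divisorial contractions to get numerical relations among $a$, $b$, $d$, $g$, $\chi(\O_X)$ via Segre classes, reduce to a finite list, identify $X$ in each case, and construct the coindex-$4$, degree-$21$ fivefold explicitly as coming from a degeneracy locus (the paper uses $\Lambda^2(T_{\p^5}(-1))\oplus\O_{\p^5}(1)$ and a general $10$-codimensional linear section of the image of $\p(F)$ in $\p(A\oplus\c^6)$). However, the middle of your argument has a genuine gap: the double-point formula, adjunction, and Castelnuovo-type genus bounds do \emph{not} suffice to cut the invariants down to finitely many cases. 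The double-point formula for the hyperplane-section surface $S\subset\p^4$ is identically zero and yields only one relation; the decisive extra equations in the paper come from the Le Barz multisecant formulae, namely $N_{5,4}=0$ for type $(4,b)$ and $N_{6,4}=0$ for type $(5,b)$, which hold because $X$ is cut out by forms of degree $a$. These formulae are only valid after one rules out lines of self-intersection $0$ or $-1$ on $S$ (the paper's Lemma on $6$-secants, which classifies the threefolds in $\p^5$ whose hyperplane sections contain such lines). Moreover, for $(a,b)=(5,1)$ the target $Z$ has coindex $4$, so there is no classification of Fano manifolds to bound $z$; the paper instead uses $d\le 20$ (since $X\subset\Sec_5(X)$ and $\deg\Sec_5(X)=4$), the inequality $e<z$ coming from $Y$ being cut out by more than two hyperplanes, and the Gruson--Peskine genus bounds. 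None of these mechanisms appears in your plan, and without them the case analysis does not terminate.

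A second, smaller but still substantive gap is the identification of $X$ once the invariants are fixed. For the two cases that actually produce new geometry ($d=12$, $g=16$ and $d=13$, $g=19$), the classification of smooth threefolds in $\p^5$ is incomplete (uniqueness for these invariants was left open by Edelmann), so the paper has to derive the resolution of ${\mathcal I}_X$ from scratch via liaison in a c.i.\ $(5,5)$ (resp.\ $(4,5)$) down to curves in $\p^3$, followed by a Beilinson spectral sequence computation of the cohomology table of ${\mathcal I}_X(4)$; in the degree-$13$ case one must additionally show $h^0(K_X)=0$ by an adjunction-mapping argument. Saying the candidate is ``typically a scroll, a complete intersection, or a known determinantal threefold'' skips exactly this step. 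Finally, several surviving numerical types lead to a \emph{singular} $Z$ (e.g.\ the complete-intersection base loci, where $Z$ is a singular hypersurface or c.i.); your plan needs an explicit criterion — the paper uses the Jacobian matrix restricted to the linear space $Y\subset Z$ — to discard these, and the claim that the new fivefold can be certified ``new'' by comparison with existing lists is not available, since no classification of coindex-$4$ Fano manifolds exists.
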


As a consequence, extending one of the main results of \cite{e-sb} (see \cite[Theorem 3.2]{e-sb}), we obtain the following result in Corollary \ref{cor:r=n+2} that shows \emph{a posteriori} an unexpected link between this class of birational transformations and the Hartshorne-Rao modules of the base locus (see Remark \ref{rem:h-r}):

\begin{corollary}
There are 18 types of birational transformations of $\p^r$ with smooth irreducible and reduced base locus of codimension $2$ onto a prime Fano manifold.
\end{corollary}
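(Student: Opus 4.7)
Since $\cd X = 2$ forces $r = \dim X + 2$, set $n := \dim X$ and split the argument on $n$. For $n \in \{1,2,3\}$ the classification is essentially already contained in Theorems \ref{thm:n=1}, \ref{thm:n=2}, \ref{thm:n=3 easy} and \ref{thm:n=3 r=5}: I would simply tabulate the entries of those classifications for which $r = n+2$ (so only $r=3$, $r=4$, $r=5$ respectively) and collect them into a single list.

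For $n \geq 4$ I would first use Barth--Lefschetz to constrain $X$. Under this hypothesis $\pic X \simeq \z\cdot H_X$ and the low cohomology of $X$ agrees with that of $\p^{n+2}$, so the Hilbert polynomial of $X$ is determined by a handful of invariants (degree, sectional genus, first Chern numbers). The birational map $\Phi:\p^{n+2}\da Z$ is cut out by a sublinear system of $|\I_X(d)|$ for some $d$, and primality together with the Fano condition on $Z$ pins down $\pic Z \simeq \z$ and the hyperplane class of $Z$ in terms of $d$ and the degrees of the forms defining $\Phi^{-1}$. Comparing the double-point formula on $\p^{n+2}$ with the corresponding intersection numbers on $Z$ (as in \cite{e-sb}) produces a finite list of admissible numerical types.

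For each surviving numerical type one reconstructs $X$ via the Serre correspondence: codimension-$2$ subvarieties of $\p^{n+2}$ are in bijection with rank-$2$ reflexive sheaves, and the Hartshorne--Rao module $\bigoplus_k H^1(\p^{n+2},\I_X(k))$ determines the minimal free resolution of $\I_X$. The requirement that $|\I_X(d)|$ resolve to an isomorphism onto a Fano $Z$ forces this module to be concentrated in a narrow band of degrees (this is the content of Remark \ref{rem:h-r}), and the resulting resolutions identify $X$ with a complete intersection, a scroll, or one of a handful of exceptional examples; in each case the explicit description of $\Phi$ and the verification that $Z$ is Fano are straightforward.

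The main obstacle I anticipate is the elimination of sporadic numerical possibilities at the boundary of the Barth--Lefschetz range, which will likely require ad hoc Chern class computations on the exceptional divisor of the resolution of $\Phi$ together with Castelnuovo-type bounds on the sectional genus. Once these are ruled out and combined with the types already obtained for $n\leq 3$, a case-by-case count yields the claimed $18$ types.
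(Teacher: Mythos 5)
Your reduction for $n\leq 3$ is exactly what the paper does: intersect the lists of Theorems \ref{thm:n=1}, \ref{thm:n=2} and \ref{thm:n=3 r=5} with the condition $r=n+2$ and count. The problem is the case $n\geq 4$, where your plan both misses the decisive idea and leans on a circular input. The paper's argument there is short and quite different from yours: by Proposition \ref{prop:num}(iii), $r=n+2$ forces $a=m+2\leq r$, and Proposition \ref{prop:ff} gives $d<a^{r-n}=a^2\leq r^2$; this places $X$ in the range of the Holme--Schneider theorem on smooth codimension-$2$ subvarieties of $\p^r$, $r\geq 6$, of small degree, which forces $X$ to be a complete intersection $(a',a)$ with $a'<a$. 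Then $a'=\deg\Sec_a(X)=ab-1$ by Proposition \ref{prop:sec}(i), so $b=1$ and $a'=a-1$, and the smoothness criterion of Proposition \ref{prop:c.i} applied to the construction of Example \ref{ex:c.i} kills everything except $a=2$, i.e. the single extra type where $X\subset\p^{r-1}$ and $Z\subset\p^{r+1}$ are quadric hypersurfaces. Without the degree bound $d<a^2$ you have no way to dispose uniformly of infinitely many values of $n$: Barth--Lefschetz does give $\pic X=\z\cdot H_X$, hence $X$ subcanonical and a Serre-correspondence rank-$2$ bundle on $\p^{n+2}$, but classifying such bundles on $\p^r$ for $r\geq 6$ is precisely the open Hartshorne problem, and the double-point formula alone does not produce a finite list independent of $n$.

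Second, your appeal to Remark \ref{rem:h-r} is backwards. That remark records, \emph{a posteriori}, that the Hartshorne--Rao modules of the base loci appearing in the final classification are special; it is stated as a consequence of Corollary \ref{cor:r=n+2}, not as an available hypothesis, so you cannot use it to confine the module to a narrow band of degrees. If you want to salvage your route for $n\geq 4$, the missing ingredient is precisely the inequality $d<a^2\leq r^2$ together with a low-degree complete-intersection criterion in codimension $2$; once $X$ is known to be a complete intersection, the reconstruction step via reflexive sheaves becomes unnecessary and the count closes at $4+7+6+1=18$.
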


The proofs of our results are obtained in several steps. In the first step, that we call \emph{numerology}, we obtain the possible numerical invariants of a special birational transformation (see Propositions \ref{prop:num}, \ref{prop:num n=1}, \ref{prop:num n=2} and \ref{prop:num n=3}). In a second step, we make use of the normal bundle $N_{X/\p^r}$ to get the \emph{fundamental formulae} (Propositions \ref{prop:ff}, \ref{prop:ff n=1}, \ref{prop:ff n=2} and \ref{prop:ff n=3}), that combined with some multisecant formulae (see Section \ref{section:n=2}) allow us to obtain, after several computations and geometric arguments, the \emph{maximal list} where all the possible numerical invariants of the embedding $X\subset\p^r$ are collected (Propositions \ref{prop:n=1}, \ref{prop:n=2}, \ref{prop:(4,b)}, \ref{prop:(5,b)} and \ref{prop:(5,1)}). As already happened with the case of special Cremona transformations, this is more complicated for $n=3$. Note that, in our setting, the situation is worse because in some cases we cannot even apply to $Z$ the classification results of Fano manifolds of small coindex obtained by Kobayashi-Ochiai \cite{k-o}, Iskovskikh and Fujita \cite{fuj}, and Mukai \cite{muk} (see Remarks \ref{rem:fano} and \ref{rem:(5,b)}) but, at least, we complete the picture in the same range covered by the classical setting (cf. \cite[Corollary 1]{c-k2}). In a third step, we first screen out the cases (if any) for which there is no $X\subset\p^r$ corresponding to the numerical invariants of the maximal list, and for each one of the remaining cases we classify $X\subset\p^r$. In some cases, this follows from the classification of manifolds of small degree, eventually combined with a \emph{liaison} in codimension $2$, but in some other cases we need to use the Beilinson spectral sequence technique to obtain a resolution of the ideal sheaf from its cohomology (see Lemmas \ref{lemma:12} and \ref{lemma:13}). Once $X\subset\p^r$ and its ideal sheaf are determined, sometimes the resulting $Z$ turns out to be singular \emph{a posteriori} (see Lemmas \ref{lem:n=1 singular}, \ref{lem:n=2 singular}, \ref{lem:(4,b)}, \ref{lem:(5,b)} and \ref{lem:(5,1)}). We include these cases separately because, in any event, we always get a divisorial contraction from the blowing-up of $\p^r$ along $X$ onto $Z$, and hence we get some examples of special birational transformations onto a normal variety of Picard number one with only $\q$-factorial and terminal singularities that might be of some interest by themselves (cf. Remark \ref{rem:sing}). Finally, in the fourth step, we present the main classification results in Theorems \ref{thm:n=1}, \ref{thm:n=2} and \ref{thm:n=3 r=5}, where the smoothness of $Z$ follows from a more general construction of series of birational transformations introduced in Section \ref{section:ex}. We point out that even if we find a lot of special birational transformations of $\p^r$ along the way, they all belong to the short list of series of examples given in Section \ref{section:ex}. In particular, in all the cases $Z$ turns out to be either $\p^r$, or a hypersurface in $\p^{r+1}$, or a complete intersection of quadric hypersurfaces, or a linear section of a Grassmannian of lines, or a prime Fano fivefold of coindex $4$ and degree $21$ described in Example \ref{ex:degree21} that, as far as we know, appears to be new.

We would like to mention that a similar extension of the classification of the quadro-quadric special Cremona transformations \cite[Theorem 2.6]{e-sb}, which is the other main result of that paper, is given in \cite{alzati-sierra} by means of techniques which are not present in this note (namely, the study of rational curves on Fano manifolds in the framework of Mori Theory applied to the study of secant defective manifolds).

\section{Preliminaries and first results}

Let $f_0,\dots,f_{\alpha}\in \c[X_0,\dots,X_r]$ be homogeneous
polynomials of degree $a\geq 2$. Let $\Phi:\p^r\da\p^{\alpha}$ be
the corresponding rational map, and let $Z:=\Phi(\p^r)$. We assume
that $\Phi:\p^r\da Z$ is a birational map and that
$Z\subset\p^{\alpha}$ is a manifold with cyclic Picard group
generated by the hyperplane section $\O_Z(1)$ of
$Z\subset\p^{\alpha}$. In this setting $Z$ is a Fano manifold, as it
is covered by rational curves of degree $a$. Let $i:=i(Z)\in\n$ denote the
index of $Z$, that is, $-K_Z=i\cdot\O_Z(1)$. Let $\Psi:Z\da\p^r$ be the
inverse of $\Phi:\p^r\da Z$. Let $X\subset\p^r$ and $Y\subset Z$
denote the the base (also called fundamental) locus of $\Phi$ and $\Psi$, respectively. We
assume that $X\subset\p^r$ is a smooth irreducible and reduced
scheme. We point out that $X\subset\p^r$ is allowed to be degenerate, i.e. contained in a hyperplane of $\p^r$. According to the classical terminology of Cremona
transformations, we say that $\Phi:\p^r\da Z$ is a \emph{special
birational transformation of $\p^r$}. Let $n:=\dim(X)$ and
$m:=\dim(Y)$, so in particular $1\leq n\leq r-2$ and $0\leq m\leq r-2$. Let $W$ denote the blowing-up of $\p^r$ along $X$,
with projection maps $\sigma:W\to\p^r$ and $\tau:W\to Z$ such that
$\tau=\Phi\circ\sigma$. Let $H:=\sigma^*\O_{\p^r}(1)$ and
$H_Z:=\tau^*\O_Z(1)$. Let $E$ be the exceptional divisor of
$\sigma:W\to\p^r$, and let $E_Z:=\tau^{-1}(Y)$ (scheme
theoretically). We point out that $E_Z$ is irreducible (see
\cite[Proposition 1.3]{e-sb}). Furthermore, a straightforward
generalization of \cite[Proposition 2.1]{e-sb} yields:

\begin{proposition}\label{prop:K}
Notation as above:
\begin{enumerate}
\item[(i)] $E_Z$ is reduced;
\item[(ii)] $H_Z=aH-E$ and $H=bH_Z-E_Z$ for some integer $b\geq1$;
\item[(iii)] $\pic(W)=\z[H]\oplus\z[E]=\z[H_Z]\oplus\z[E_Z]$;
\item[(iv)] $K_W=-(r+1)H+(r-n-1)E=-iH_Z+(r-m-1)E_Z$.
\end{enumerate}
\end{proposition}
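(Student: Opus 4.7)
The plan is to adapt the proof of \cite[Proposition 2.1]{e-sb} to our setting, where the only novelty is that the target is a prime Fano manifold $Z$ rather than $\p^r$; the assumption $\pic(Z)=\z[\O_Z(1)]$ will supply exactly the arithmetic needed for the argument to carry through.

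First I would establish (ii). The rational map $\Phi$ is defined by a base-point free subsystem of $|aH-E|$ on $W$ whose associated morphism is $\tau$, so $H_Z=\tau^*\O_Z(1)=aH-E$. For the inverse, since $\pic(Z)=\z[\O_Z(1)]$, the map $\Psi:Z\da\p^r$ is defined by sections $g_0,\dots,g_r\in H^0(Z,\O_Z(b))$ for a unique $b\geq 1$. The identity $\sigma=\Psi\circ\tau$ (wherever defined) forces a factorization $\tau^*g_i=s_0^k\cdot\sigma^*X_i$, with $s_0$ a local equation for the irreducible divisor $E_Z^{\mathrm{red}}$ (see \cite[Proposition 1.3]{e-sb}) and $k\geq 1$ the common order of vanishing of the $\tau^*g_i$ along $E_Z^{\mathrm{red}}$. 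Reading this in $\pic(W)$ gives $H=bH_Z-kE_Z^{\mathrm{red}}$.

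Next I would combine these two relations with $\pic(W)=\z[H]\oplus\z[E]$ (standard for a blow-up of $\p^r$ along a smooth centre) to deduce (iii) and (i) simultaneously. Substituting $H_Z=aH-E$ into $H=bH_Z-kE_Z^{\mathrm{red}}$ gives $kE_Z^{\mathrm{red}}=(ab-1)H-bE$; integrality of $E_Z^{\mathrm{red}}$ together with $\gcd(ab-1,b)=1$ forces $k=1$, whence the change-of-basis matrix between $(H,E)$ and $(H_Z,E_Z^{\mathrm{red}})$ has determinant $-1$ and (iii) follows. With $k=1$, the factorization becomes $\tau^*g_i=s_0\cdot\sigma^*X_i$; as the $\sigma^*X_i$ have no common zero on $W$ (they are pulled back from a base-point free linear system on $\p^r$), the ideal sheaf $\I_{E_Z}$ they generate locally equals $(s_0)=\I_{E_Z^{\mathrm{red}}}$, proving (i) and completing the second equation of (ii).

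Finally, (iv) is a routine canonical bundle computation. On the $\sigma$-side, $K_W=\sigma^*K_{\p^r}+(r-n-1)E=-(r+1)H+(r-n-1)E$. On the $\tau$-side, one first argues from (i) and the smoothness of $Z$ along the generic point of $Y$ that $\tau$ realizes $W$ as the blow-up of $Z$ along $Y$ with its induced scheme structure; the analogous formula then yields $K_W=\tau^*K_Z+(r-m-1)E_Z=-iH_Z+(r-m-1)E_Z$. I expect the main subtlety to lie not in this last step but in justifying the reducedness of $E_Z$ and the identification of $\tau$ as a blow-up; both points are already handled in \cite{e-sb} under hypotheses that are satisfied here verbatim, so only minor bookkeeping around the prime Fano target $Z$ is required.
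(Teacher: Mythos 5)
Your overall strategy coincides with the paper's: the authors give no argument at all for this proposition, asserting only that it is ``a straightforward generalization of \cite[Proposition 2.1]{e-sb}'', and your write-up is precisely that generalization. Your treatment of (ii), (iii) and (i) is correct and complete: the identity $kE_Z^{\mathrm{red}}=(ab-1)H-bE$ together with $\gcd(ab-1,b)=1$ does force $k=1$, the resulting change-of-basis matrix is unimodular, and the factorization $\tau^*g_i=s_0\cdot\sigma^*X_i$ with the $\sigma^*X_i$ generating the unit ideal does show $\I_{E_Z}=(s_0)$. You also correctly isolate where the prime Fano hypothesis enters (the existence of the integer $b$ with $\Psi$ given by $H^0(\O_Z(b))$). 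The first equality in (iv) is the standard smooth blow-up formula for $\sigma$.

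The one genuine gap is your justification of the second equality in (iv), i.e.\ of the coefficient $r-m-1$ of $E_Z$. You propose to identify $\tau$ with the blow-up of $Z$ along $Y$ and invoke ``the analogous formula''; but that formula computes the discrepancy as $\cd_Z(Y)-1$ only when $Y$ is a \emph{smooth} centre, and nothing in the hypotheses guarantees that $Y$ is smooth, or even generically reduced as a scheme (indeed $Y$ is just the base scheme of the system defining $\Psi$, and in several of the paper's examples it is far from being a smooth subvariety in any a priori sense). For a birational morphism of smooth varieties one only gets the inequality $q\geq\cd_Z(\tau(E_Z))-1=r-m-1$ for the coefficient $q$ of $E_Z$ in $K_{W/Z}$; iterated or non-reduced centres can make $q$ strictly larger, so the blow-up identification, even if established, does not close the argument. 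The missing step is an upper bound $q\leq r-m-1$, which must come from the geometry of a general fibre $F$ of $E_Z\to Y$: writing $K_W=-iH_Z+qE_Z$, restricting to $F$ (where $H_Z|_F$ is trivial and $E_Z|_F=-H|_F$), and using adjunction together with the triviality of $N_{F/E_Z}$, one finds that $F$ is a Fano manifold of dimension $r-m-1$ of index $q+1$ with respect to $H|_F$, whence $q+1\leq r-m$ by Kobayashi--Ochiai; combined with the lower bound this gives $q=r-m-1$ (and, as a bonus, that $F$ is a linear $\p^{r-m-1}$, which is the content of Proposition \ref{prop:sec}(ii)). Since the coefficient $r-m-1$ is exactly what feeds Proposition \ref{prop:num}(ii)--(iv), this step cannot be waved through; it is handled in \cite{e-sb}, but not by the mechanism you describe.
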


Let $T:=\sigma^*(L)\subset W$ and $T_Z:=\tau^*(L_Z)\subset W$, where
$L\subset\p^r$ is a general line and $L_Z\subset Z$ is the curve obtained by intersecting $Z$ with $r-1$ general hyperplanes of $\p^{\alpha}$, respectively. Note that $a=\deg(\Phi(L))=T\cdot H_Z$ and that $bz=\deg(\Psi(L_Z))=T_Z\cdot H$, where $z:=\deg(Z)$.

\begin{definition}
In this setting, we say that $\Phi:\p^r\da Z\subset\p^{\alpha}$ is a \emph{special
birational transformation of $\p^r$ of type $(a,b)$}.
\end{definition}

\begin{remark}
Without loss of generality, we will assume throughout the paper that $f_0,\dots,f_{\alpha}$ is a basis of $H^0({\mathcal I}_X(a))$ or, equivalently, that $Z\subset\p^{\alpha}$ is non-degenerate and linearly normal.
\end{remark}

A line $L\subset\p^r$ is a $k$-secant line of $X$ if $L\not\subset
X$ and $\length(X\cap L)\geq k$. Let $\Sec_k(X)\subset\p^r$ denote
the closure of the union of all the $k$-secant lines of $X$. The
useful remark given in \cite[Proposition 2.3]{e-sb} also holds in our
setting:

\begin{proposition}\label{prop:sec}
Notation as above:
\begin{enumerate}
\item[(i)] $\sigma(E_Z)=\Sec_a(X)\subset\p^r$ is a hypersurface of degree $ab-1$;
\item[(ii)] Let $p\in\sigma(E_Z)$ be a general point. Then the union of all the
$a$-secant lines of $X$ through $p$ is an $(r-m-1)$-plane that
intersects $X$ in a hypersurface of degree $a$.
\end{enumerate}
\end{proposition}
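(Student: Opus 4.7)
The plan is to follow the argument of \cite[Proposition 2.3]{e-sb} with only cosmetic modifications, since the replacement of $\p^r$ by a prime Fano $Z$ does not affect the computations that take place on the blow-up $W$. First, I would determine the class of $E_Z$ in the basis $\{H, E\}$ of $\pic(W)$: substituting $H_Z = aH - E$ into $H = bH_Z - E_Z$ from Proposition \ref{prop:K}(ii) immediately gives $E_Z = (ab-1)H - bE$. Since $\sigma\colon W \to \p^r$ is an isomorphism outside $E$, and $E_Z$ is irreducible and, by the class computation, distinct from $E$, the map $\sigma|_{E_Z}$ is birational onto $\sigma(E_Z)$. Pushing the divisor $E_Z$ forward under $\sigma$ and using $\sigma_\ast(E) = 0$ (because $X$ has codimension $\geq 2$) yields that $\sigma(E_Z)\subset\p^r$ is a hypersurface of degree $ab-1$.

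Next, I would prove (ii) by studying a general fibre $F_q := \tau^{-1}(q)$ for $q \in Y$ general. This is an irreducible subscheme of $W$ of pure dimension $r - m - 1$, and its image $\Lambda := \sigma(F_q)$ contains a chosen general point $p \in \sigma(E_Z)$ with $\Phi(p) = q$. Because $\tau$ contracts $F_q$ to a point, $H_Z|_{F_q} \equiv 0$, and the two formulas of Proposition \ref{prop:K}(ii) then restrict to give $E|_{F_q} \equiv a\,H|_{F_q}$ and $H|_{F_q} \equiv -E_Z|_{F_q}$. The first relation, combined with the birationality of $\sigma|_{F_q}\colon F_q \to \Lambda$, forces $X \cap \Lambda$ to be cut out on $\Lambda$ by a hypersurface of degree $a$. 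To show that $\Lambda$ itself is a linear $(r-m-1)$-plane, I would compute the number of independent sections of $\O_{F_q}(H)$, or equivalently the number of hyperplanes of $\p^r$ containing $\Lambda$, using the relation $H|_{F_q} \equiv -E_Z|_{F_q}$ together with the structure of $E_Z$ as a $\p^{r-m-1}$-bundle over $Y$ away from its singular locus.

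The set-theoretic identification $\sigma(E_Z) = \Sec_a(X)$ then follows quickly. For any line $\ell \subset \p^r$ with $\ell \not\subset X$, the strict transform $\tilde\ell$ satisfies $\tilde\ell \cdot H_Z = \tilde\ell\cdot(aH - E) = a - \length(\ell \cap X)$, so $\ell$ is an $a$-secant of $X$ precisely when $\tilde\ell$ is $\tau$-contracted, i.e.\ contained in $E_Z$. Thus every $a$-secant is contained in $\sigma(E_Z)$, giving $\Sec_a(X)\subseteq \sigma(E_Z)$; and conversely, by part (ii), through a general point of $\sigma(E_Z)$ passes the $(r-m-1)$-plane $\Lambda$ filled with $a$-secants of $X$, giving $\sigma(E_Z)\subseteq\overline{\Sec_a(X)}=\Sec_a(X)$.

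The main obstacle is the linearity of $\Lambda$: the divisor-class identities on $F_q$ only control intersection numbers, while linearity is a statement about the projective embedding $\Lambda \hookrightarrow \p^r$. The key step is therefore verifying that the map $F_q \to \p^r$ is given by a complete linear system of dimension exactly $r-m$, for which one needs to control the restriction map $H^0(W,\O(H)) \to H^0(F_q, \O_{F_q}(H))$; this is where the input from the irreducibility of $E_Z$ and the structure of $\tau|_{E_Z}$ is essential and where the argument of \cite{e-sb} must be transplanted verbatim.
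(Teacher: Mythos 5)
Your treatment of part (i) is correct and complete: $E_Z=(ab-1)H-bE$ follows from Proposition \ref{prop:K}(ii), the pushforward argument gives the degree, and the inclusion $\Sec_a(X)\subseteq\sigma(E_Z)$ via $\tilde\ell\cdot H_Z=a-\length(\ell\cap X)\le 0$ and nefness of $H_Z$ is exactly the right mechanism. (For what it is worth, the paper offers no proof at all here --- it simply asserts that \cite[Proposition 2.3]{e-sb} carries over --- so any honest comparison is really with the argument of \cite{e-sb}.)

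The genuine gap is the linearity of $\Lambda=\sigma(\tau^{-1}(q))$, which is the entire content of part (ii), and your proposed route for it is circular. You want to count hyperplanes containing $\Lambda$ ``using the structure of $E_Z$ as a $\p^{r-m-1}$-bundle over $Y$'', but no such structure is available: $W$ is the blow-up of $\p^r$ along $X$, not of $Z$ along $Y$, so $\tau$ is merely a divisorial contraction whose exceptional divisor maps onto $Y$, and there is no a priori reason for its general fibre to be a projective space, let alone one embedded linearly by $|H|$ --- that is precisely what (ii) asserts. The numerical relations $H|_{F_q}\equiv -E_Z|_{F_q}$ and $E|_{F_q}\equiv aH|_{F_q}$ only control intersection numbers, as you yourself note, and without an independent computation of $(H|_{F_q})^{r-m-1}=1$ (or an argument that the line through two general points of $\Lambda$ is an $a$-secant, hence $\tau$-contracted, hence contained in $F_q$, so that $\Lambda$ coincides with its own secant variety and is therefore linear) the step does not close. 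Two further points ride on the same gap: the irreducibility of the general fibre $F_q$ is asserted but not justified (general fibres of a morphism from an irreducible variety need not be irreducible, and if $F_q$ were reducible the statement would produce a union of planes rather than a single one), and the claim that $X\cap\Lambda$ is a degree-$a$ hypersurface requires knowing $\Lambda\cong\p^{r-m-1}$ before a divisor-class identity on $F_q$ can be converted into a statement about the scheme $X\cap\Lambda$. Since you explicitly defer the decisive step to a verbatim transplant of \cite{e-sb} without reproducing it, the proposal as written is an accurate plan for parts of the statement but not a proof of it.
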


\begin{remark}\label{rem:sec}
We point out that moreover $X\subset\Sec_a(X)$. As $\sigma(E_Z)=\Sec_a(X)$, it is enough to show that the intersection in $W$ of $E_Z$ and every fibre of $E$ over $X$ is positive. For any $x\in X$, let $\xi$ be a curve contained in $\p^{r-n-1}\cong\sigma^{-1}(x)\subset E$. Then $H\cdot\xi=0$ and $H_Z\cdot\xi>0$. As $H=bH_Z-E_Z$, we get $E_Z\cdot\xi=bH_Z\cdot\xi>0$.
\end{remark}

Note that, in particular, Proposition \ref{prop:sec}(i) yields $r\leq 2n+2$, and equality holds only for $a=2$. Furthermore, there are some stronger restrictions on the numerical invariants $\{a,b,n,m,i\}$ of a birational transformation $\Phi:\p^r\da Z$. The following result will be very important in what follows (cf. \cite[Lemma 2.4]{e-sb}):

\begin{proposition}[Numerology]\label{prop:num}
Let $\Phi:\p^r\da Z$ be a special birational transformation of type
$(a,b)$. Then:
\begin{enumerate}
%\item[(i)] $T\cdot E_Z=ab-1$ and $T_Z\cdot E=(ab-1)z$
\item[(i)] $i=(r+1)b-(r-n-1)(ab-1)$
\item[(ii)] $r+1=ia-(r-m-1)(ab-1)$
\item[(iii)] $a=(m+2)/(r-n-1)$
\item[(iv)] $b=(n+2+i-r-1)/(r-m-1)$
\end{enumerate}
\end{proposition}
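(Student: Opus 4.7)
The proposition is essentially a bookkeeping argument based on Proposition \ref{prop:K}. The plan is to use the two different expressions for the canonical class $K_W$ and to match coefficients after a change of basis in $\pic(W)$.

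By Proposition \ref{prop:K}(iii), both $\{H,E\}$ and $\{H_Z,E_Z\}$ are $\z$-bases of $\pic(W)$, related by the linear equations of Proposition \ref{prop:K}(ii). Inverting them, one obtains
\[
E=aH-H_Z, \qquad E_Z=bH_Z-H,
\]
and substituting the first into the second gives $E_Z=(ab-1)H_Z-aE_Z\text{'s partner}$; more precisely, $E=(ab-1)H_Z-aE_Z$ after we rewrite $H$ in terms of $(H_Z,E_Z)$. The whole proof then consists in taking the two expressions
\[
K_W=-(r+1)H+(r-n-1)E=-iH_Z+(r-m-1)E_Z
\]
from Proposition \ref{prop:K}(iv) and comparing them in each basis.

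To obtain (i) and (iii), I would rewrite $K_W=-(r+1)H+(r-n-1)E$ in the basis $\{H_Z,E_Z\}$ using $H=bH_Z-E_Z$ and $E=(ab-1)H_Z-aE_Z$, collect terms, and equate to $-iH_Z+(r-m-1)E_Z$. The coefficient of $H_Z$ yields $i=(r+1)b-(r-n-1)(ab-1)$, which is (i), while the coefficient of $E_Z$ yields $(r+1)-a(r-n-1)=r-m-1$, i.e.\ $a=(m+2)/(r-n-1)$, which is (iii).

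Symmetrically, to obtain (ii) and (iv), I would rewrite $K_W=-iH_Z+(r-m-1)E_Z$ in the basis $\{H,E\}$ using $H_Z=aH-E$ and $E_Z=(ab-1)H-bE$, and equate the result to $-(r+1)H+(r-n-1)E$. The coefficient of $H$ gives $r+1=ia-(r-m-1)(ab-1)$, which is (ii); the coefficient of $E$ gives $i-b(r-m-1)=r-n-1$, i.e.\ $b=(n+2+i-r-1)/(r-m-1)$, which is (iv). There is no real obstacle here: (ii) and (iv) are algebraic rearrangements of (i) and (iii), and the content is entirely contained in Proposition \ref{prop:K}. Stating all four forms is simply convenient for the later numerical classification arguments.
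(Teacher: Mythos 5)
Your proof is correct, and it takes a slightly different route from the paper's. The paper extracts (i) and (ii) by intersecting the two expressions for $K_W$ from Proposition \ref{prop:K}(iv) with the curve classes $T=\sigma^*(L)$ and $T_Z=\tau^*(L_Z)$, using $iz=-K_W\cdot T_Z$ and $r+1=-K_W\cdot T$ together with the intersection numbers $T\cdot E_Z=ab-1$ and $T_Z\cdot E=(ab-1)z$; it then obtains (iii) and (iv) by eliminating $i$ between (i) and (ii). You instead exploit Proposition \ref{prop:K}(iii) directly: since $\{H,E\}$ and $\{H_Z,E_Z\}$ are both bases of $\pic(W)$ and the change of basis is determined by Proposition \ref{prop:K}(ii) (indeed $E=(ab-1)H_Z-aE_Z$ and $E_Z=(ab-1)H-bE$, as you compute), equating coefficients of the two expressions for $K_W$ in each basis yields all four identities at once, with no intersection theory and no elimination step. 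The two arguments are of course extracting the same information from Proposition \ref{prop:K}, but yours is self-contained at the level of $\pic(W)$ and makes it transparent that (iii) and (iv) are not consequences of (i) and (ii) by algebra alone but are the ``other'' coefficient equations; the paper's version has the mild advantage of recording along the way the intersection numbers $T\cdot E_Z=ab-1$ and $T_Z\cdot E=(ab-1)z$, which reappear in Proposition \ref{prop:sec} and in the fundamental formulae. The only blemish in your write-up is the garbled intermediate sentence before the correct formula $E=(ab-1)H_Z-aE_Z$; once that is cleaned up the argument is complete.
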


\begin{proof}
Since $E_Z=bH_Z-H$ and $E=aH-H_Z$, we deduce $T\cdot
E_Z=T\cdot(bH_Z-H)=ab-1$ and $T_Z\cdot E=T_Z\cdot(aH-H_Z)=abz-z$. We remark that $iz=-K_Z\cdot L_Z=-K_W\cdot T_Z$, and
hence $iz=(r+1)H\cdot T_Z-(r-n-1)E\cdot T_Z=(r+1)bz-(r-n-1)(ab-1)z$
by Proposition \ref{prop:K}. This proves (i). Similarly,
$r+1=-K_{\p^r}\cdot L=-K_W\cdot T=iH_Z\cdot T-(r-m-1)E_Z\cdot
T=ia-(r-m-1)(ab-1)$, proving (ii). Finally, (iii) and (iv) easily
follows by eliminating $i$ in (i) and (ii).
\end{proof}

In particular, we obtain better bounds than those of \cite[Lemma
3.1]{e-sb}:

\begin{corollary}
Let $\Phi:\p^r\da Z$ be a special birational transformation of type
$(a,b)$. Then:
\begin{enumerate}
\item[(i)] $a\leq \min\{m+2, r/(r-n-1)\}$
\item[(ii)] $b\leq \min\{n+2+i-r-1, (i-1)/(r-m-1)\}$
\end{enumerate}
\end{corollary}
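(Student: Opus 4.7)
The corollary reduces essentially to rereading parts (iii) and (iv) of Proposition \ref{prop:num} in conjunction with the basic dimension bounds $n\le r-2$ and $m\le r-2$ recorded at the beginning of the section (both base loci have codimension at least two in their respective ambient varieties). My plan is therefore simply to extract each of the four inequalities from one of those two numerological identities.

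For (i), I would start from the identity $a(r-n-1)=m+2$ of Proposition \ref{prop:num}(iii). The hypothesis $n\le r-2$ makes $r-n-1$ a positive integer, so dividing yields $a\le m+2$. On the other hand, the hypothesis $m\le r-2$ gives $m+2\le r$, and substituting into the same identity produces $a(r-n-1)\le r$, i.e. $a\le r/(r-n-1)$. The argument for (ii) is completely symmetric, based on the identity $b(r-m-1)=n+2+i-r-1$ of Proposition \ref{prop:num}(iv): the positivity $r-m-1\ge 1$ (from $m\le r-2$) gives $b\le n+2+i-r-1$, while $n\le r-2$ gives $n+2+i-r-1\le i-1$ and hence $b(r-m-1)\le i-1$, so that $b\le (i-1)/(r-m-1)$.

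There is no genuine obstacle here. The main — and only — thing to watch is which of the two codimension bounds plays which role in each of the four inequalities; beyond that bookkeeping, no further geometric input is needed and the proof fits in a handful of lines.
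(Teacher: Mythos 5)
Your proof is correct and is exactly the intended argument: the paper states the corollary without proof as an immediate consequence of Proposition \ref{prop:num}(iii) and (iv), and your derivation — using $r-n-1\geq 1$ and $r-m-1\geq 1$ from the codimension bounds $n\leq r-2$, $m\leq r-2$ to extract each of the four inequalities — is the straightforward reading the authors had in mind.
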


\begin{remark}\label{rem:fano}
We will use several times along the paper the classification of manifolds of small coindex $r+1-i\in\{0,1,2,3\}$. If $i=r+1$ then $Z=\p^r$, and if $i=r$ then $Z\subset\p^{r+1}$ is a quadric hypersurface by \cite{k-o}. If $i=r-1$ then $Z$ is a Del Pezzo manifold of degree $z\in\{3,4,5\}$ (see \cite{fuj}), and if $i=r-2$ then $Z$ is a Mukai manifold of degree $z\in\{4,6,8,10,12,14,16,18,22\}$ (see \cite{muk}).
 \end{remark}

Throughout the paper, and motivated by the results of \cite{c-k}, \cite{e-sb} and \cite{c-k2}, we will study the following extremal and next-to-extremal cases $n\in\{1,2,3\}$, $m\in\{0,1,2\}$, and $r=n+2$. We begin with a simple consequence of Proposition \ref{prop:num}.

\begin{theorem}\label{thm:m=0}
Let $\Phi:\p^r\da Z$ be a special birational transformation of type
$(a,b)$. If $m=0$ then $X\subset\p^{r-1}$ is a quadric hypersurface
and $Z\subset\p^{r+1}$ is a quadric hypersurface.
\end{theorem}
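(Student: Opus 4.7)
The plan is to extract all the numerical invariants directly from Proposition~\ref{prop:num}, identify $Z$ via Kobayashi--Ochiai, and then recover $X$ from the secant-line description in Proposition~\ref{prop:sec}.

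\emph{Step 1: numerology.} Setting $m=0$ in Proposition~\ref{prop:num}(iii) gives $a(r-n-1)=2$. Since $a\geq 2$ is an integer and $n\leq r-2$, the only solution is $r-n-1=1$ and $a=2$, so $n=r-2$ and $X$ has codimension $2$ in $\p^r$. Plugging these into Proposition~\ref{prop:num}(i) yields $i=(r+1)b-(2b-1)=(r-1)b+1$. As $\dim Z=r$ and $\pic(Z)=\z$, Kobayashi--Ochiai bounds the index by $i\leq r+1$; since $n\geq 1$ forces $r\geq 3$, the inequality $(r-1)b+1\leq r+1$ collapses to $b=1$, whence $i=r$.

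\emph{Step 2: classification of $Z$.} A smooth Fano $r$-fold of Picard number one and index $r$, embedded by the fundamental generator of its Picard group, must be a quadric hypersurface in $\p^{r+1}$ by Kobayashi--Ochiai (see Remark~\ref{rem:fano}). Hence $Z\subset\p^{r+1}$ is a smooth quadric.

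\emph{Step 3: classification of $X$.} By Proposition~\ref{prop:sec}(i), $\Sec_2(X)\subset\p^r$ is a hypersurface of degree $ab-1=1$, so it is a hyperplane $H_0\cong\p^{r-1}$. By Proposition~\ref{prop:sec}(ii), for a general point $p\in H_0$ the union $\Lambda$ of all $2$-secant lines of $X$ through $p$ is an $(r-1)$-plane meeting $X$ in a hypersurface of degree $2$. Every such secant line is contained in $\Sec_2(X)=H_0$, so $\Lambda\subseteq H_0$, and equality of dimensions gives $\Lambda=H_0$. Thus $X\cap H_0$ contains a subvariety of dimension $r-2=\dim X$; irreducibility of $X$ forces $X\subset H_0$, and then the scheme-theoretic equality $X=X\cap H_0$ identifies $X$ with a quadric hypersurface in $H_0\cong\p^{r-1}$.

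There is no serious obstacle in this argument: everything is driven by the divisibility constraint $(r-n-1)\mid 2$ coming from Proposition~\ref{prop:num}(iii) together with the sharp index bound from Kobayashi--Ochiai. The only mildly subtle point is the dimension comparison in Step~3, which forces $X$ to be degenerate a posteriori even though no such assumption is built into the hypotheses.
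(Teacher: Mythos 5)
Your proof is correct and follows essentially the same route as the paper: Proposition \ref{prop:num} forces $a=2$, $r=n+2$, $b=1$ and $i=r$, after which $Z$ is identified as a quadric via Kobayashi--Ochiai (as recorded in Remark \ref{rem:fano}) and $X$ as a quadric in the hyperplane $\Sec_2(X)$ via Proposition \ref{prop:sec}. Your Step~3 merely spells out in more detail the dimension count that the paper leaves implicit, and your appeal to Kobayashi--Ochiai replaces the paper's citation of Example \ref{ex:c.i} and Proposition \ref{prop:c.i}, which the paper uses mainly to confirm that the case actually occurs.
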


\begin{proof}
Since $m=0$, we get from Proposition \ref{prop:num}(iii) that $a=2$
and $r=n+2$. Therefore, Proposition \ref{prop:num}(iv) yields $b=(i-1)/(r-1)\leq r/(r-1)$. As $r\geq 3$, we deduce
$b=1$ and $i=r$. Finally, $\deg(\Sec_2(X))=1$ and hence $X\subset\p^{r-1}$ is a quadric
hypersurface by Proposition \ref{prop:sec}, so $Z\subset\p^{r+1}$ is a smooth quadric hypersurface (see Example \ref{ex:c.i} and Proposition \ref{prop:c.i}).
\end{proof}

Combined with Proposition \ref{prop:num}, the following result will be crucial in the sequel (cf. \cite[Formulae 0.3.]{c-k}):

\begin{proposition}[Fundamental formulae]\label{prop:ff}
Let $\Phi:\p^r\da Z$ be a special birational transformation of type
$(a,b)$, and let $d:=\deg(X)$. Then $d<a^{r-n}$ and
\begin{enumerate}
\item[(i)] $z=a^r-\binom{r}{n}a^nd-\sum^{n-1}_{i=0}\binom{r}{i}a^is_{n-i}$
\item[(ii)] $bz=a^{r-1}-\binom{r-1}{n-1}a^{n-1}d-\sum^{n-1}_{i=1}\binom{r-1}{i-1}a^{i-1}s_{n-i}$
\item[(iii)] $b^2z-e=a^{r-2}-\binom{r-2}{n-2}a^{n-2}d-\sum^{n-1}_{i=2}\binom{r-2}{i-2}a^{i-2}s_{n-i}$,
where $e:=Y\cdot\O_Z^{r-2}(1)$
\end{enumerate}
\end{proposition}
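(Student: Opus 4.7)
The plan is to compute the three intersection numbers $H_Z^{r-k}\cdot H^k$ on $W$ for $k=0,1,2$, identify them with the three left-hand sides of (i)--(iii), and then expand each right-hand side using the classical blowing-up formulae for $\sigma\colon W\to\p^r$.

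First I identify the left-hand sides. Since $\tau_*[W] = [Z]$, the projection formula gives $z = H_Z^r$ immediately. For (ii), substitute $H = bH_Z - E_Z$: then $H_Z^{r-1}\cdot H = bz - H_Z^{r-1}\cdot E_Z$, and $H_Z^{r-1}\cdot E_Z = \O_Z(1)^{r-1}\cdot\tau_*E_Z$ vanishes by dimension, since $\tau_*E_Z$ would be a class of dimension $r-1$ supported on $Y$, which has dimension $m\le r-2$. Expanding $H^2 = b^2H_Z^2 - 2bH_ZE_Z + E_Z^2$ and applying the same vanishing to the middle term, $H_Z^{r-2}\cdot H^2 = b^2z + H_Z^{r-2}\cdot E_Z^2$, so (iii) reduces to proving $H_Z^{r-2}\cdot E_Z^2 = -e$.

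This key identity is trivial when $m<r-2$, as both sides vanish by dimension. The main obstacle is the case $m = r-2$, where I would compute $\tau_*E_Z^2$ directly as a multiple of $[Y]$: by Proposition~\ref{prop:sec}(ii) a general fibre $F$ of $\tau|_{E_Z}\colon E_Z\to Y$ maps under $\sigma$ to an $a$-secant line of $X$, so $F\cdot H = 1$; also $F\cdot H_Z = 0$ because $F$ is contracted by $\tau$, and $H = bH_Z - E_Z$ then forces $F\cdot E_Z = -1$. It follows that $\tau_*E_Z^2 = -[Y]$ and hence $H_Z^{r-2}\cdot E_Z^2 = -\O_Z(1)^{r-2}\cdot [Y] = -e$. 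This step is delicate because $Z$ is not assumed smooth and $\tau$ need not be a blowing-up; the usual self-intersection formula for a divisorial contraction therefore has to be extracted from this direct fibre computation.

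For the right-hand sides I set $c := r-n$ and expand $H_Z^{r-k}\cdot H^k = (aH-E)^{r-k}\cdot H^k$ binomially. The standard blowing-up formulae $\sigma_*E^l = 0$ for $l<c$ and $\sigma_*E^{c+j} = (-1)^{c-1}s_j(N_{X/\p^r})\cap [X]$ for $j\ge 0$ kill the mixed terms with $1\le l<c$ and, via the projection formula, evaluate each remaining term as a multiple of $(-1)^{c-1}\int_X s_j(N_{X/\p^r})\cdot H|_X^{n-k-j}$. With the (sign-adjusted) convention $s_j := (-1)^j\int_X s_j(N_{X/\p^r})\cdot H|_X^{n-j}$, so that $s_0 = d$, collecting the signs from $(-1)^l$ in the binomial and from $(-1)^{c-1}$ in the Segre push-forward, and using $\binom{r-k}{c+j} = \binom{r-k}{n-k-j}$ together with the reindexing $i = n-j$, the three expansions reproduce precisely formulae (i)--(iii). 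The bound $d<a^{r-n}$ would then follow by applying the same expansion on the blowing-up of a general $\p^{r-n}\subset\p^r$ at its $d$ intersection points with $X$: the computation yields $(aH-E)^{r-n} = a^{r-n} - d$, and this number is strictly positive because it equals the degree of the image of $\Phi|_{\p^{r-n}}$ in $Z$ times the degree of the generically finite induced morphism.
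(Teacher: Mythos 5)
Your overall strategy coincides with the paper's: identify $z$, $bz$ and $b^2z-e$ with $H_Z^{r-k}\cdot H^k$ for $k=0,1,2$ and expand $(aH-E)^{r-k}\cdot H^k$ via the pushforwards of powers of $E$. Two of your ingredients are actually more detailed than the paper's. The fibre computation giving $\tau_*E_Z^2=-[Y]$ when $m=r-2$ (so $H_Z^{r-2}\cdot E_Z^2=-e$) justifies an identity the paper only asserts, and your argument for $d<a^{r-n}$ --- restricting to a general $(r-n)$-plane, noting that $|aH-E|$ is base-point free on its strict transform, and reading off $a^{r-n}-d$ as the degree of the image times the degree of the induced map, hence $\geq 1$ --- is a clean alternative to the paper's Bezout-plus-complete-intersection dichotomy. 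Both parts are fine.

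The genuine problem is the Segre pushforward. The correct formula is $\sigma_*E^{c+j}=(-1)^{c+j-1}s_j(N_{X/\p^r})\cap[X]$, not $(-1)^{c-1}s_j(N_{X/\p^r})\cap[X]$: for a line $L\subset\p^3$ one has $E^3=\deg s_1(N_{L/\p^3})=-2$, whereas your exponent gives $+2$. You then absorb the missing $(-1)^j$ into the ``sign-adjusted'' definition $s_j:=(-1)^j\int_X s_j(N_{X/\p^r})\cdot H_X^{n-j}$, so your expansions do reproduce the \emph{shape} of (i)--(iii), but with a symbol $s_j$ that for odd $j$ is the \emph{negative} of the quantity the Proposition refers to, namely $s_k=\deg(s_k(N_{X/\p^r}))=s_k(N_{X/\p^r})\cdot H_X^{n-k}$ as fixed in Remark \ref{rem:s} and used numerically everywhere afterwards (e.g.\ $s_1=2-2g-(r+1)d$ in Proposition \ref{prop:ff n=1}). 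Concretely, under your convention formula (i) for $n=1$ would read $z=a^r-rad-\bigl((r+1)d+2g-2\bigr)$, which gives $z=-49$ instead of $z=1$ for the quintic elliptic curve in $\p^4$ with $a=2$. Using the correct exponent $c+j-1$ removes any need for a sign adjustment and yields (i)--(iii) with the standard $s_k$; the rest of your argument then goes through. (A minor further slip: the exponent in ``$\int_X s_j(N_{X/\p^r})\cdot H|_X^{n-k-j}$'' should be $n-j$, not $n-k-j$; the latter is the exponent of $a$.)
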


\begin{proof}
Since $X\subset\p^r$ is defined by polynomials of degree $a$, we deduce $d\leq a^{r-n}$. Furthermore,
if $d=a^{r-n}$ then $X\subset\p^r$ is the complete intersection of $r-n$ hypersurfaces of degree $a$, giving a contradiction. We recall that $H^i\cdot E^{r-i}=0$ for $n<i<r$ and
that $H^i\cdot E^{r-i}=(-1)^{r-i-1}s_{n-i}$ for $i\leq n$, where
$s_k:=\deg(s_k(N_{X/\p^r}))$ denotes the degree of the $k$-th Segre class of
the normal bundle $N_{X/\p^r}$ of $X$ in $\p^r$. Therefore, (i) follows from
the equality $z=H_Z^r=(aH-E)^r$ and (ii) follows from the equality
$bz=H_Z^{r-1}\cdot H=(aH-E)^{r-1}\cdot H$. Finally, $b^2z-e=H_Z^{r-2}\cdot H^2=(aH-E)^{r-2}\cdot H^2$ giving (iii).
\end{proof}

\begin{remark}\label{rem:e}
Note that $e=\deg(Y)$ if and only if $Y\subset Z$ has codimension $2$. Otherwise, $e=0$.
\end{remark}

\begin{remark}\label{rem:s}
The number $s_k$ can be obtained from the exact sequence $$0\to
TX\to T\p^r_{|X}\to N_{X/\p^r}\to 0$$ having in mind that $s(N_{X/\p^r})\cdot c(N_{X/\p^r})=1$.
Therefore, $$s(N_{X/\p^r})=c(T_X)\cdot
c(T\p^r_{|X})^{-1}=\sum_{i=0}^nc_i\cdot\sum_{i=0}^n(-1)^i\binom{r+i}{i}H_X^i$$
and $s_k:=s_k(N_{X/\p^r})\cdot H_X^{n-k}$, where $H_X$ denotes the hyperplane section of $X\subset\p^r$. In particular, $s_0=d$.
\end{remark}

\section{Case $n=1$}

\begin{proposition}[Numerology for $n=1$]\label{prop:num n=1}
Let $\Phi:\p^r\da Z$ be a special birational transformation of type $(a,b)$.
If $n=1$ then one of the following holds:
\begin{enumerate}
\item[(i)] $r=4$, $a=2$, $b\in\{1,2,3\}$, $i=b+2$ and $m=2$;
\item[(ii)] $r=3$, $a=3$, $b\in\{1,2,3\}$, $i=b+1$ and $m=1$;
\item[(iii)] $r=3$, $a=2$, $b=1$, $i=3$ and $m=0$.
\end{enumerate}
\end{proposition}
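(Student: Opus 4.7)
The plan is to reduce the statement to the arithmetic of Proposition \ref{prop:num} together with two external constraints: the hypersurface dimension bound $r \leq 2n+2$ (with equality forcing $a = 2$) recorded after Proposition \ref{prop:sec}, and the Kobayashi-Ochiai bound $i \leq r+1$ on the index of a Fano manifold from Remark \ref{rem:fano}. Setting $n = 1$ these immediately give $r \in \{3, 4\}$ (since also $r \geq n + 2 = 3$), and the argument splits naturally into these two cases.

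First I would handle $r = 4$. Since $r = 2n+2$, the equality case forces $a = 2$. Plugging into Proposition \ref{prop:num}(iii) with $r - n - 1 = 2$ yields $m = 2$, and then Proposition \ref{prop:num}(i) reduces to the linear relation $i = b + 2$. The Fano bound $i \leq 5$ combined with $b \geq 1$ leaves $b \in \{1, 2, 3\}$, which is exactly case (i).

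Next I would handle $r = 3$, where Proposition \ref{prop:num}(iii) becomes simply $a = m + 2$ with $0 \leq m \leq r - 2 = 1$. If $m = 0$ one can either quote Theorem \ref{thm:m=0} directly, or derive it: $a = 2$, Proposition \ref{prop:num}(i) yields $i = 2b + 1$, and the bound $i \leq 4$ forces $b = 1$ and $i = 3$, giving case (iii). If $m = 1$, then $a = 3$ and Proposition \ref{prop:num}(i) collapses to $i = b + 1$; the bound $i \leq 4$ leaves $b \in \{1, 2, 3\}$, giving case (ii).

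No serious geometric obstacle appears: the statement is a purely numerical corollary of Proposition \ref{prop:num} plus the two external bounds listed above. The only thing to watch is the bookkeeping — in particular, confirming that formulae (i) and (iv) of Proposition \ref{prop:num} give compatible expressions for $i$ in terms of $b$ in each branch, and that the resulting solutions all satisfy the obvious positivity constraints $b \geq 1$, $i \geq 1$ and $m \geq 0$.
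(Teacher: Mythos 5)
Your derivation is correct and is exactly the "simple consequence of Proposition \ref{prop:num}" that the paper intends: the bound $r\leq 2n+2$ (with equality forcing $a=2$), formula (iii) to pin down $a$ and $m$, formula (i) to express $i$ in terms of $b$, and the Kobayashi--Ochiai bound $i\leq r+1$ to restrict $b$. Nothing is missing; the arithmetic in all three branches checks out.
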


\begin{proof}
This is a simple consequence of Proposition \ref{prop:num}.
\end{proof}

Let $g:=g(X)$ denote the genus of $X$.

\begin{proposition}[Fundamental formulae for $n=1$]\label{prop:ff n=1}
Let $\Phi:\p^r\da Z$ be a special birational transformation of type
$(a,b)$. If $n=1$ then:
\begin{enumerate}
\item[(i)] $z=a^r+(r+1-ra)d+2g-2$
\item[(ii)] $bz=a^{r-1}-d$
\item[(iii)] $b^2z-e=a^{r-2}$
\end{enumerate}
\end{proposition}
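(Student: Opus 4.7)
The plan is to specialize the general fundamental formulae of Proposition~\ref{prop:ff} to $n=1$, where nearly every term in the Segre-class expansion either vanishes or collapses onto a single scalar. First, setting $n=1$ in (ii) and (iii) of Proposition~\ref{prop:ff}: the summations run from $i=1$ to $n-1=0$ and from $i=2$ to $n-1=0$ respectively, so both are empty; moreover $\binom{r-2}{n-2}=\binom{r-2}{-1}=0$ by the standard convention. This yields (ii) and (iii) at once, with no contribution from any Segre class. In (i), only the single term $s_1=\deg s_1(N_{X/\p^r})$ remains from the Segre sum, leaving $z=a^r-rad-s_1$.

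It then only remains to express $s_1$ in terms of $d$ and $g$. Since $N_{X/\p^r}$ is a vector bundle on the curve $X$, we have $s_1(N_{X/\p^r})=-c_1(N_{X/\p^r})$. From the short exact sequence
\[
0\to T_X\to T\p^r_{|X}\to N_{X/\p^r}\to 0
\]
one reads $c_1(N_{X/\p^r})=c_1(T\p^r_{|X})-c_1(T_X)$, whose degrees are $(r+1)d$ and $2-2g$ respectively. Therefore $s_1=-(r+1)d-(2g-2)$, and substituting into $z=a^r-rad-s_1$ yields $z=a^r+(r+1-ra)d+2g-2$, as claimed.

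I do not foresee any real obstacle here: the argument is entirely bookkeeping, the only mild care being to track the empty-sum conventions in Proposition~\ref{prop:ff} and the sign of $s_1$ relative to $c_1$ for a bundle on a curve.
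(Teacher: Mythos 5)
Your proof is correct and follows the same route as the paper: specialize Proposition \ref{prop:ff} to $n=1$ (where the Segre sums collapse) and compute $s_1=2-2g-(r+1)d$ from the normal bundle sequence via $s(N_{X/\p^r})\cdot c(N_{X/\p^r})=1$, exactly as in Remark \ref{rem:s}. The sign conventions and the empty-sum bookkeeping all check out.
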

\begin{proof}
They easily follow from Proposition \ref{prop:ff} and Remark \ref{rem:s}, since $s_0=d$ and $s_1=2-2g-(r+1)d$.
\end{proof}

%\begin{remark}
%The last formula does not appear in \cite{c-k}. Note that
%$b^2z-Y\cdot\O_Z(1)^{r-2}$ is the degree in $\p^r$ of the image by
%$\Psi$ of the general surface section of $Z$. In particular,
%$Y\cdot\O_Z(1)^{r-2}=0$ if and only if $m\leq 1$.
%\end{remark}

\begin{proposition}[Maximal list for $n=1$]\label{prop:n=1}
Let $\Phi:\p^r\da Z$ be a special birational transformation of type $(a,b)$.
If $n=1$ then one of the following holds:
\begin{enumerate}
\item[(i)] $r=4$, $a=2$, $b=3$, $d=5$, $g=1$ and $z=1$;
\item[(ii)] $r=4$, $a=2$, $b=2$, $d=4$, $g=0$ and $z=2$;
\item[(iii)] $r=4$, $a=2$, $b=1$, $d=4$, $g=1$ and $z=4$;
\item[(iv)] $r=4$, $a=2$, $b=1$, $d=3$, $g=0$ and $z=5$;
\item[(v)] $r=3$, $a=3$, $b=3$, $d=6$, $g=3$ and $z=1$;
\item[(vi)] $r=3$, $a=3$, $b=2$, $d=5$, $g=1$ and $z=2$;
\item[(vii)] $r=3$, $a=3$, $b=1$, $d=6$, $g=4$ and $z=3$;
\item[(viii)] $r=3$, $a=3$, $b=1$, $d=5$, $g=2$ and $z=4$;
\item[(ix)] $r=3$, $a=3$, $b=1$, $d=4$, $g=0$ and $z=5$;
\item[(x)] $r=3$, $a=2$, $b=1$, $d=2$, $g=0$ and $z=2$.
\end{enumerate}
\end{proposition}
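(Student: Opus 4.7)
The plan is to combine the numerology of Proposition \ref{prop:num n=1} with the three fundamental formulae of Proposition \ref{prop:ff n=1}, reducing each numerological case to a single one-parameter problem in $z$. Concretely, formula (ii) gives $d=a^{r-1}-bz$, formula (iii) gives $e=b^2z-a^{r-2}$, and substituting the expression for $d$ into formula (i) produces a linear relation between $g$ and $z$, so that $g$ becomes an explicit affine function of $z$ with rational coefficients.

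The positivity and integrality constraints then do the work: one requires $z\geq 1$ (since $Z$ is a Fano manifold), $d\geq 1$ together with the strict inequality $d<a^{r-n}=a^{r-1}$ coming from Proposition \ref{prop:ff} (since otherwise $X$ would be a complete intersection of hypersurfaces of degree $a$, contradicting $\Phi$ being birational onto a variety with cyclic Picard group generated by $\O_Z(1)$), $g\geq 0$, $e\geq 0$, and the parity condition forcing $g$ to be an integer. Each of these cuts down the admissible range of $z$ to a short finite list. For example, in case (i) of Proposition \ref{prop:num n=1} with $b=1$, one computes $d=8-z$, $e=z-4$, and $g=5-z$, which immediately forces $z\in\{4,5\}$ and gives items (iii) and (iv) of the statement. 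The other subcases are handled identically.

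The enumeration must be run through all the subcases of Proposition \ref{prop:num n=1}: in case (i), one works through each $b\in\{1,2,3\}$; in case (ii), through each $b\in\{1,2,3\}$; case (iii) is a single sub-problem already essentially settled by Theorem \ref{thm:m=0}, yielding item (x). In each of the remaining seven subcases the linear relation between $g$ and $z$ has a negative coefficient for $z$, so $g\geq 0$ gives an immediate upper bound on $z$, and only one or two integer values survive. Assembling all of them produces exactly the ten items (i)--(x) of the statement.

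The main obstacle is more bookkeeping than substance: one must be careful with the parity constraints (in some subcases $g$ is an integer only for $z$ of a prescribed parity, so some \emph{a priori} candidates drop out) and with distinguishing $e=0$ from $e>0$ in view of Remark \ref{rem:e}. No deeper geometric input is required at this point: the realizability of each numerical type by a smooth irreducible curve $X\subset\p^r$ is a separate problem to be addressed in the subsequent steps of the classification.
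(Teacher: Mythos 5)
Your proposal is correct and yields exactly the ten numerical types, but it reaches them by a slightly different mechanism than the paper. The paper first pins down $z$ through the index: from Proposition \ref{prop:num n=1} one has $i=b+2$ (resp. $i=b+1$) for $r=4$ (resp. $r=3$), so $b=3$ forces $Z=\p^r$ and $z=1$, $b=2$ forces $Z$ to be a quadric and $z=2$ by Kobayashi--Ochiai, and $b=1$ forces $z\in\{3,4,5\}$ by Fujita's classification of Del Pezzo manifolds; only then are $d$ and $g$ read off from the fundamental formulae, with the single exclusion $z=3$, $e=-1$ in the $r=4$, $b=1$ subcase. You instead solve the three formulae for $d$, $e$, $g$ as affine functions of $z$ and let $g\geq 0$, $e\geq 0$, $z\geq 1$ and the integrality of $g$ cut down $z$; as you can check case by case, these constraints reproduce the same admissible values (indeed for $r=4$, $b=1$ they give the slightly sharper $z\in\{4,5\}$ directly). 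What your route buys is independence from the coindex classification results \cite{k-o} and \cite{fuj} in this dimension, which is a genuine, if modest, simplification; what the paper's route buys is uniformity with the $n=2$ and $n=3$ cases, where the arithmetic constraints alone no longer suffice and the Fujita/Mukai classifications are indispensable. Two small points of care, both of which you handle correctly: the constraint must be $e\geq 0$ and not $e>0$ (even though the numerology gives $m=2$, hence nominally $\cd(Y)=2$ and $e=\deg(Y)>0$, insisting on strict positivity would wrongly discard case (iii), which belongs to the maximal list and is only eliminated later in Lemma \ref{lem:n=1 singular} because $Z$ turns out to be singular); and the parity of $g$ is genuinely needed to kill $z=1$ in the two $b=2$ subcases.
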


\begin{proof}
We proceed according to Proposition \ref{prop:num n=1}. Assume $r=4$ and $a=2$. Then Proposition \ref{prop:ff n=1} yields $z=14-3d+2g$, $bz=8-d$ and $b^2z-e=4$. If $b=3$ then $i=5$ and hence $z=1$, so $d=5$ and $g=1$ giving (i). If $b=2$ then $i=4$ and hence $z=2$, so $d=4$ and $g=0$. This gives (ii). If $b=1$ then $i=3$ and hence $z\in\{3,4,5\}$ by the classification of Del Pezzo manifolds \cite{fuj}. If $z=3$ then $d=5$, $g=2$ and $e=-1$ so this case does not exist (see Remark \ref{rem:e}). If $z=4$ then $d=4$ and $g=1$, giving (iii). If $z=5$ then $d=3$ and $g=0$, giving (iv).

Assume now $r=3$ and $a=3$. Then we get $z=25-5d+2g$, $bz=9-d$ and $b^2z-e=3$ by Proposition \ref{prop:ff n=1}. If $b=3$ then $i=4$ and hence $z=1$, so $d=6$, $g=3$ and we get (v). If $b=2$ then $i=3$ and hence $z=2$, so $d=5$ and $g=1$, giving (vi). If $b=1$ then $i=2$ and hence $z\in\{3,4,5\}$. If $z=3$ then $d=6$ and $g=4$. This gives (vii). If $z=4$ then $d=5$ and $g=2$, giving (viii). If $z=5$ then $d=4$ and $g=0$, giving case (ix).

Finally, if $r=3$ and $a=2$ then $b=1$. So $d=2$ and $g=0$ by Proposition \ref{prop:sec}, and we get (x).
\end{proof}

Let us exclude first the cases in which $Z$ is singular in Proposition \ref{prop:n=1}.

\begin{lemma}\label{lem:n=1 singular}
In Proposition \ref{prop:n=1}, cases (iii) and (vii), $Z$ turns out to be singular. More precisely:
\begin{itemize}
\item In case (iii), $X\subset\p^3$ is a complete intersection of quadrics, c.i. $(2,2)$ for short, and $Z\subset\p^6$ is a singular c.i. $(2,2)$.
\item In case (vii), $X\subset\p^3$ is a c.i. $(2,3)$ and $Z\subset\p^4$ is a singular cubic hypersurface.
\end{itemize}
%Note that in both cases $\deg(\Sec_a(X))=a-1$ (cf.
%Proposition \ref{prop:sec}(i)) but there are two secant lines of $X$
%passing through a general point $p\in\Sec_a(X)$ (cf. Proposition
%\ref{prop:sec}(ii)). This shows that $\tau:W\to Z$ cannot be the
%blowing up of $Z$ along $Y$ (even on and open subset) and hence $Z$
%must be singular (cf. \cite[Theorem 1 and Proposition
%2.1.(b)]{e-sb}).
\end{lemma}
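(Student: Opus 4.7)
The plan is to identify $X\subset\p^r$ precisely in each case from the numerical data, then write down explicit generators of $H^0(\I_X(a))$, derive equations for $Z$ from the relations among these generators, and finally verify the singularity by the Jacobian criterion.

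In case (iii), where $r=4$, $d=4$, $g=1$, Riemann-Roch gives $h^0(\O_X(1))=4$, so the restriction $H^0(\O_{\p^4}(1))\to H^0(\O_X(1))$ cannot be injective and $X$ must lie in a hyperplane $H\cong\p^3$ of $\p^4$. Inside $H$ the elliptic quartic $X$ is a complete intersection of two quadrics $Q_1,Q_2$. Using the exact sequence $0\to\I_{H/\p^4}(2)\to\I_{X/\p^4}(2)\to\I_{X/H}(2)\to 0$ with $\I_{H/\p^4}\cong\O_{\p^4}(-1)$, I can read off $h^0(\I_{X/\p^4}(2))=5+2=7$, so $\alpha=6$ and a basis is $\{hx_0,\dots,hx_3,h^2,Q_1,Q_2\}$, where $h$ is the equation of $H$. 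Choosing $h=x_4$ and setting $y_i:=hx_i$ for $0\leq i\leq 3$, $y_4:=h^2$, $y_5:=Q_1$, $y_6:=Q_2$, the identities $y_iy_j=y_4\,x_ix_j$ translate directly into the two equations $y_4y_5=Q_1(y_0,\dots,y_3)$ and $y_4y_6=Q_2(y_0,\dots,y_3)$, identifying $Z\subset\p^6$ with a complete intersection of two quadrics. On the line $\{y_0=\cdots=y_4=0\}\subset Z$ the Jacobian of this pair has rank at most one, so $Z$ is singular along a $\p^1$.

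Case (vii) is analogous. Since $d=6$ and $g=4$ attain the Castelnuovo bound for non-degenerate curves of degree $6$ in $\p^3$, the curve $X$ is canonical of genus $4$ and hence a complete intersection of a quadric $Q$ and a cubic $C$. The Koszul resolution of $\I_X$, twisted by $3$, yields $h^0(\I_X(3))=5$ with basis $\{Qx_0,Qx_1,Qx_2,Qx_3,C\}$, so $\alpha=4$. Setting $y_i:=Qx_i$ for $0\leq i\leq 3$ and $y_4:=C$, the substitution $x_ix_j=y_iy_j/Q(y_0,\dots,y_3)$ produces the single cubic relation $F:=y_4\,Q(y_0,\dots,y_3)-C(y_0,\dots,y_3)=0$, so $Z\subset\p^4$ is a cubic hypersurface. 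A direct inspection shows that $F$ and all its partial derivatives vanish at $(0{:}0{:}0{:}0{:}1)$ (since the linear terms of $Q$ and the linear and quadratic terms of $C$ all vanish at the origin of the $y_0,\dots,y_3$-coordinates), so $Z$ is singular there.

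The main obstacle is the first step, pinning down the scheme-theoretic structure of $X$ from the bare invariants $(d,g)$; once that is done the equations of $Z$ and the verification of singularity reduce to routine coordinate computations.
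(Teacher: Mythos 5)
Your proof is correct, and the identifications of $X$ (degenerate elliptic quartic $\Rightarrow$ c.i. $(2,2)$ in a hyperplane; sextic of genus $4$ in $\p^3$ $\Rightarrow$ canonical, hence c.i. $(2,3)$) match what the paper takes as ``well known.'' For case (iii) your explicit computation of the seven quadrics, the two relations $y_4y_5=Q_1(y_0,\dots,y_3)$, $y_4y_6=Q_2(y_0,\dots,y_3)$, and the rank drop of the Jacobian along $\{y_0=\dots=y_4=0\}$ is precisely the ``direct computation'' the paper alludes to. For case (vii) you take a genuinely different route: the paper invokes its general machinery (Example~\ref{ex:c.i}, which produces the equations of $Z$ by syzygies and elimination, together with Proposition~\ref{prop:c.i}, whose numerological argument shows that a \emph{smooth} $Z$ arising this way must be a complete intersection of quadrics, so a cubic hypersurface image is automatically singular), whereas you write down the cubic $F=y_4Q(y_0,\dots,y_3)-C(y_0,\dots,y_3)$ and exhibit the singular point $(0{:}\dots{:}0{:}1)$ directly. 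Your version is more elementary and self-contained; the paper's buys uniformity, since the same two results dispose of all the c.i.-type cases throughout the article.

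One small point you should make explicit: in both cases you only verify the \emph{containment} of $Z$ in the complete intersection (resp.\ in the cubic $\{F=0\}$), and the Jacobian/partial-derivative criterion detects singularity of the scheme cut out by those equations, not a priori of $Z$. Equality follows because $Z$ is irreducible of the dimension and degree recorded in Proposition~\ref{prop:n=1} ($z=4$, resp.\ $z=3$), which matches the degree of the complete intersection (resp.\ forces $F$ to be irreducible and equal to the equation of $Z$). This is routine but worth a line, since without it the argument in case (vii) would not exclude $Z$ being a lower-degree component of $\{F=0\}$.
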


\begin{proof}
In both cases, the classification of $X\subset\p^r$ is well known. Concerning $Z$, case (iii) follows from a direct computation. For case (vii), see Example \ref{ex:c.i} and Proposition \ref{prop:c.i}.
\end{proof}

The main result of this section is the following:

\begin{theorem}\label{thm:n=1}
Let $\Phi:\p^r\da Z$ be a special birational transformation of type $(a,b)$.
If $n=1$ then one of the following holds:
\begin{enumerate}
\item[(I)] $r=4$, $(a,b)=(2,3)$, $X\subset\p^4$ is a quintic elliptic curve and $Z=\p^4$;
\item[(II)] $r=4$, $(a,b)=(2,2)$, $X\subset\p^4$ is a quartic rational curve and $Z\subset\p^5$ is a quadric hypersurface;
\item[(III)] $r=4$, $(a,b)=(2,1)$, $X\subset\p^4$ is a twisted cubic and $Z\subset\p^7$ is a linear section of $\g(1,4)\subset\p^9$;
\item[(IV)] $r=3$, $(a,b)=(3,3)$, $X\subset\p^3$ is a sextic curve of genus $3$ and $Z=\p^3$;
\item[(V)] $r=3$, $(a,b)=(3,2)$, $X\subset\p^3$ is a quintic elliptic curve and $Z\subset\p^4$ is a quadric hypersurface;
\item[(VI)] $r=3$, $(a,b)=(3,1)$, $X\subset\p^3$ is a quintic curve of genus $2$ and $Z\subset\p^5$ is a c.i. $(2,2)$;
\item[(VII)] $r=3$, $(a,b)=(3,1)$, $X\subset\p^3$ is a quartic rational curve and $Z\subset\p^6$ is a linear section of $\g(1,4)\subset\p^9$;
\item[(VIII)] $r=3$, $(a,b)=(2,1)$, $X\subset\p^2$ is a conic and $Z\subset\p^4$ is a quadric hypersurface.
\end{enumerate}
\end{theorem}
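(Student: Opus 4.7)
The plan is to start from the maximal list in Proposition \ref{prop:n=1}, remove cases (iii) and (vii) via Lemma \ref{lem:n=1 singular}, and then for each of the remaining eight numerical types classify $X\subset\p^r$ and identify the image $Z$ explicitly, matching the types (I)--(VIII).

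The first task, for each case, is to pin down $X\subset\p^r$ from its invariants $(r,d,g)$. All curves appearing are either complete intersections or classical linearly (or nearly linearly) normal curves of low degree: a quintic elliptic normal curve in $\p^4$ for (I), a rational normal quartic in $\p^4$ for (II), a twisted cubic spanning a hyperplane of $\p^4$ for (III), the space sextic of genus $3$ in $\p^3$ for (IV), a quintic elliptic curve in $\p^3$ for (V), a smooth quintic of genus $2$ in $\p^3$ for (VI), a rational normal quartic in $\p^3$ for (VII), and a conic in $\p^2$ for (VIII). In each instance existence and uniqueness up to projective equivalence are classical; the value of $h^0(\I_X(a))$ is then forced by Riemann--Roch and projective normality, so the linear system defining $\Phi$ is determined, and in particular the dimension $\alpha$ of the ambient projective space of $Z$ is fixed.

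The second task is to recognize $Z\subset\p^\alpha$. Rather than analyzing each map in isolation, I would appeal to Section \ref{section:ex}, which produces series of special birational transformations whose targets are precisely $\p^r$, quadric hypersurfaces, complete intersections of two quadrics, and linear sections of $\g(1,4)\subset\p^9$. One checks that each of the eight cases (I)--(VIII) is realized by one of these series; this simultaneously exhibits a birational $\Phi$ of the prescribed type $(a,b)$, identifies $Z$ with the claimed Fano manifold, and certifies its smoothness. Uniqueness of the transformation given $X$ is then automatic since $\Phi$ is defined by the complete linear system $|\I_X(a)|$, so the construction exhausts all the numerical possibilities of Proposition \ref{prop:n=1} save the two singular ones.

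The hard part is the uniqueness of $X$ together with the certification that the resulting map is birational onto a \emph{smooth} Fano manifold in the less rigid cases, most notably (IV) (a space sextic of genus $3$ in $\p^3$, where the ideal in degree $3$ and the smoothness of the cubic Cremona image need explicit control) and (VI) (a smooth quintic of genus $2$ in $\p^3$, where the image $Z$ is a complete intersection of two quadrics in $\p^5$ whose smoothness must be verified \emph{a posteriori}). In these situations the fundamental formulae of Proposition \ref{prop:ff n=1}, the secant-locus description of Proposition \ref{prop:sec}, and the series of examples from Section \ref{section:ex} must be combined to rule out spurious configurations of $X$ and to conclude that the corresponding $Z$ is the smooth Fano manifold listed.
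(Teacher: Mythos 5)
Your proposal follows essentially the same route as the paper: reduce to the eight admissible numerical types via Proposition \ref{prop:n=1} and Lemma \ref{lem:n=1 singular}, classify $X\subset\p^r$ using the well-known classification of low-degree curves, and identify $Z$ case by case through the series of examples of Section \ref{section:ex}, which simultaneously certifies smoothness of the target. The only quibble is terminological: the curve in case (VII) is a smooth rational quartic in $\p^3$, i.e.\ an isomorphic projection of the rational normal quartic of $\p^4$, not a \emph{rational normal} quartic.
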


\begin{proof}
By Proposition \ref{prop:n=1} and Lemma \ref{lem:n=1 singular} we have to consider only eight cases. In all of them, the classification of $X\subset\p^r$ is well known. Concerning $Z$, cases (I) and (IV) are classical (see Remark \ref{rem:cremona} (i) and (ii), respectively). Case (II) is contained in Example \ref{ex:quadrics}(iv). Case (III) is obtained by a direct computation taking two hyperplane sections in Example \ref{ex:quadrics}(i). Case (V) is given either in Example \ref{ex:g(1,r)}, or in Example \ref{ex:hypersurfaces} (see Remark \ref{rem:hypersurfaces}). Cases (VI) and (VIII) are given in Example \ref{ex:c.i} and Proposition \ref{prop:c.i}. Finally, case (VII) is given either in Example \ref{ex:g(1,r+1)}, or in Example \ref{ex:degree21}.
\end{proof}

In particular, we recover the following result (see \cite[Theorem 2.2]{c-k}):

\begin{corollary}[Crauder-Katz]
Let $\Phi:\p^r\da\p^r$ be a special Cremona transformation of type
$(a,b)$. If $n=1$ then either $r=4$, $(a,b)=(2,3)$ and $X\subset\p^4$
is a quintic elliptic curve, or else $r=3$, $(a,b)=(3,3)$ and
$X\subset\p^3$ is a sextic curve of genus $3$.
\end{corollary}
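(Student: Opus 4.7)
The plan is to extract the Cremona case directly from Theorem \ref{thm:n=1}. By definition, a special Cremona transformation has $Z=\p^r$, so the proof reduces to identifying which items in the eight-item list of Theorem \ref{thm:n=1} realize $Z=\p^r$; the rest of the work is already done.

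First, I would use the constraint $Z=\p^r$ to force the index to be $i=r+1$, and then feed this into Proposition \ref{prop:num n=1}. Case (iii) of that proposition has $r=3$ and $i=3$, hence $i\ne r+1$, and is immediately discarded. In case (i), the relation $i=b+2$ combined with $i=r+1=5$ pins down $b=3$, yielding type $(a,b)=(2,3)$ with $r=4$. In case (ii), the relation $i=b+1$ combined with $i=r+1=4$ pins down $b=3$, yielding type $(a,b)=(3,3)$ with $r=3$. So only two numerical types survive.

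Finally, I would look up the corresponding entries of Theorem \ref{thm:n=1}. The type $(r,a,b)=(4,2,3)$ appears only in item (I), where $X\subset\p^4$ is a quintic elliptic curve and indeed $Z=\p^4$; the type $(r,a,b)=(3,3,3)$ appears only in item (IV), where $X\subset\p^3$ is a sextic curve of genus $3$ and indeed $Z=\p^3$. All other items (II), (III), (V), (VI), (VII), (VIII) have $Z$ strictly different from $\p^r$ (a quadric hypersurface, a Grassmannian linear section, a complete intersection of quadrics, etc.), so they are not Cremona transformations.

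There is no serious obstacle: once Theorem \ref{thm:n=1} is granted, the corollary is a matter of reading off the two items whose target happens to be projective space, and the numerological step above merely confirms that the relevant $(a,b)$-types are uniquely determined.
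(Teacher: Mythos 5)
Your proposal is correct and matches the paper's (implicit) argument: the corollary is stated as an immediate consequence of Theorem \ref{thm:n=1}, obtained by reading off the two items, (I) and (IV), in which $Z=\p^r$. The extra numerological step via $i=r+1$ and Proposition \ref{prop:num n=1} is a harmless redundancy that merely confirms the same conclusion.
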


Theorem \ref{thm:n=1} also yields the following consequence for $m=1$ (cf. Theorem \ref{thm:m=0}):

\begin{theorem}\label{thm:m=1}
Let $\Phi:\p^r\da Z$ be a special birational transformation of type
$(a,b)$. If $m=1$ then $a=3$, $r=n+2$ and either $X\subset\p^r$ and $Z$ are as in Theorem \ref{thm:n=1}, cases (IV)-(VII), or $X\subset\p^4$ is a quintic elliptic scroll and $Z=\p^4$, or $X\subset\p^r$ is a quintic Castelnuovo manifold, $r\in\{3,4,5\}$, and $Z\subset\p^{r+2}$ is a c.i. $(2,2)$.
\end{theorem}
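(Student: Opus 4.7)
My plan is to reduce the hypothesis $m = 1$ to a codimension-$2$ constraint with $a = 3$ via Proposition \ref{prop:num}, peel off the three dimensions $n \in \{1, 2, 3\}$ using the corresponding classification results together with the fundamental formulae, and finally dispose of the tail $n \geq 4$ by combining Proposition \ref{prop:ff} with Fujita's classification of Del Pezzo manifolds.

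First I would apply Proposition \ref{prop:num}(iii): the equation $a(r-n-1) = 3$ with $a \geq 2$ and $r-n-1 \geq 1$ forces $a = 3$ and $r = n+2$. Proposition \ref{prop:num}(iv) then gives $i = nb + 1$, and since $i \leq \dim Z + 1 = n+3$, we obtain $b \leq 1 + 2/n$, so $b = 1$ for $n \geq 3$ and $b \in \{1, 2\}$ for $n = 2$. The case $n = 1$ is immediate from Theorem \ref{thm:n=1}, since its cases (IV)-(VII) are precisely those with $a = 3$.

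For $n = 2$ with $b = 2$, the identity $i = 5 = r+1$ gives $Z = \p^4$, and Proposition \ref{prop:ff}(iii) (with $e = 0$ since $Y \subset Z$ has codimension $\geq 3$) reduces to $4 = 9 - d$, forcing $d = 5$; combined with the remaining fundamental formulae and the classification of smooth quintic surfaces in $\p^4$, this selects the quintic elliptic scroll. For $n = 2$ with $b = 1$, $Z$ is a Del Pezzo fourfold of degree $z = 9 - d \in \{3, 4, 5\}$ by Remark \ref{rem:fano}, and a case-by-case check against the classification of smooth surfaces of degree $d \in \{4, 5, 6\}$ in $\p^4$ leaves only the quintic Castelnuovo surface, whose image is a smooth c.i. $(2,2)$ in $\p^6$. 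The analogous argument for $n = 3$ (with $b = 1$, $i = 4$, $z = 27 - 9d - s_1$) uses Proposition \ref{prop:ff} together with the classification of smooth codimension-$2$ threefolds of small degree in $\p^5$ to produce the quintic Castelnuovo threefold with $Z \subset \p^7$ a smooth c.i. $(2,2)$.

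The hardest part, I expect, is excluding $n \geq 4$: here $Z$ would be a Del Pezzo $(n+2)$-fold with $n+2 \geq 6$, and by Fujita only very few degrees $z$ remain possible. The relation
\[
b^2 z = 3^n - \binom{n}{2}\, 3^{n-2}\, d - \sum_{i=2}^{n-1} \binom{n}{i-2}\, 3^{i-2}\, s_{n-i}
\]
from Proposition \ref{prop:ff}(iii), combined with the bound $d < 9$ (Proposition \ref{prop:ff}) and the positivity and arithmetic constraints on the Segre classes of $N_{X/\p^{n+2}}$ (Remark \ref{rem:s}), should be shown to admit no solutions in this range. Smoothness of each $Z$ in the surviving list is then provided by the explicit constructions in Section \ref{section:ex}.
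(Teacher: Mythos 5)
Your reduction to $a=3$, $r=n+2$ and the bound $b\le 1+2/n$ are correct and agree with the paper, as does the case $n=1$; for $n=2$, $b=2$ your numerical route through Proposition \ref{prop:ff} does pin down $(d,g)=(5,1)$ and hence the quintic elliptic scroll (the paper argues instead via $\deg(\Sec_3(X))=5$ and the trisecant result of Aure, but both work). A smaller issue in the cases $b=1$, $n\in\{2,3\}$: your ``case-by-case check against the classification'' silently discards the complete intersection $(2,3)$, which is a perfectly good smooth codimension-$2$ variety with the right invariants (e.g. $d=6$, $g=4$, $z=3$ for $n=2$); it is eliminated only because the resulting $Z$ is a \emph{singular} cubic hypersurface (Example \ref{ex:c.i} and Proposition \ref{prop:c.i}), not because no such $X$ exists, and you need to invoke that mechanism explicitly.

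The genuine gap is the exclusion of $n\ge4$. The three fundamental formulae of Proposition \ref{prop:ff} involve the $n$ unknowns $s_1,\dots,s_n$, there is no positivity available for Segre classes of normal bundles, and three equations in $n+1$ unknowns together with $d<9$ cannot produce a contradiction. Worse, the system genuinely has solutions for every $n$: the quintic Castelnuovo $n$-fold, i.e.\ the degeneracy locus of a generic map $\O_{\p^{n+2}}(-1)^{\oplus2}\to\O_{\p^{n+2}}^{\oplus2}\oplus\O_{\p^{n+2}}(1)$, has numerical data satisfying all the relations with $d=5$, $z=4$ (its image $Z$ being a c.i.\ $(2,2)$ as in Example \ref{ex:c.i}), so no purely numerical argument can rule these cases out. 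What actually kills $n\ge 4$ is geometry: since $b=1$, Proposition \ref{prop:sec}(i) gives $\deg(\Sec_3(X))=2$, so $X$ lies on a quadric and, being of codimension $2$ and cut out by cubics, is linked in a c.i.\ $(2,3)$ either to nothing ($X$ a c.i.\ $(2,3)$, whence $Z$ singular) or to a linear space ($X$ a quintic Castelnuovo manifold); in the latter case the rank-zero stratum of the defining matrix has codimension $6$ by Giambelli--Thom--Porteous, so $X$ itself is singular once $r\ge6$ (Theorem \ref{thm:kleiman} and Remark \ref{rem:porteous}). Alternatively, as in the proof of Corollary \ref{cor:r=n+2}, for $r\ge6$ one has $d<9<r^2$ and Holme--Schneider forces $X$ to be a complete intersection, with the same conclusion. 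Either way, the missing ingredient is a smoothness/structure argument, not more numerology.
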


\begin{proof}
If $m=1$ we deduce from Proposition \ref{prop:num}(iii) that $a=3$ and $r=n+2$. Therefore, Proposition \ref{prop:num}(iv) yields $b=(i-1)/(r-2)\leq r/(r-2)$. So $b\in\{1,2,3\}$ for $r=3$, $b\in\{1,2\}$ for $r=4$, and $b=1$ for $r\geq 5$. If $r=3$ then $n=1$ and we conclude by Theorem \ref{thm:n=1}. If $r=4$ and $b=2$ then $X\subset\p^4$ is a quintic elliptic scroll by \cite{aure}, since $\Sec_3(X)\neq\p^4$ and $\deg(\Sec_3(X))=5$, and $Z=\p^4$. Finally, if $r\geq 4$ and $b=1$ then $\deg(\Sec_3(X))=2$ by Proposition \ref{prop:sec}(i). Hence $X\subset\p^r$ is a divisor on a quadric hypersurface, so either it is a c.i. $(2,3)$ and hence $Z\subset\p^{r+1}$ would be a singular cubic hypersurface by Example \ref{ex:c.i} and Proposition \ref{prop:c.i}, or else it is linked to a linear $\p^n$ by a c.i. $(2,3)$ (i.e. it is a quintic Castelnuovo manifold). In that case, $Z\subset\p^{r+2}$ is a c.i. $(2,2)$ by Example \ref{ex:c.i} and Proposition \ref{prop:c.i}.
\end{proof}

\begin{remark}
For $r=3$, a similar result to Theorem \ref{thm:n=1} has been independently obtained in \cite[Theorem 7]{pan}.
\end{remark}

\section{Case $n=2$}\label{section:n=2}

\begin{proposition}[Numerology for $n=2$]\label{prop:num n=2}
Let $\Phi:\p^r\da Z$ be a special birational transformation of type
$(a,b)$. If $n=2$ then one of the following holds:
\begin{enumerate}
\item[(i)] $r=6$, $a=2$, $b\in\{1,2,3,4\}$, $i=b+3$ and $m=4$;
\item[(ii)] $r=5$, $a=2$, $b\in\{1,2\}$, $i=2b+2$ and $m=2$;
\item[(iii)] $r=4$, $a=4$, $b\in\{1,2,3,4\}$, $i=b+1$ and $m=2$;
\item[(iv)] $r=4$, $a=3$, $b\in\{1,2\}$, $i=2b+1$ and $m=1$;
\item[(v)] $r=4$, $a=2$, $b=1$, $i=4$ and $m=0$.
\end{enumerate}
\end{proposition}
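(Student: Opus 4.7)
The plan is to execute the numerical book-keeping dictated by Proposition \ref{prop:num}, specialized to $n=2$. First I would pin down the allowed values of $r$: from $n\leq r-2$ we have $r\geq 4$, and from Proposition \ref{prop:sec}(i) (together with the remark following it) we have $r\leq 2n+2=6$, with $r=6$ forcing $a=2$. Combined with the bound $a\leq r/(r-n-1)=r/(r-3)$ from the corollary to Proposition \ref{prop:num}, this gives a short list of admissible pairs $(r,a)$: namely $(4,2),(4,3),(4,4),(5,2),(6,2)$.

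Next, for each such pair I would read off $m$ from Proposition \ref{prop:num}(iii), which for $n=2$ reads $a=(m+2)/(r-3)$, so $m=a(r-3)-2$. This produces $m=0,1,2$ when $r=4$ and $a=2,3,4$ respectively, $m=2$ when $(r,a)=(5,2)$, and $m=4$ when $(r,a)=(6,2)$.

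Then Proposition \ref{prop:num}(i) gives $i$ as a linear function of $b$:
\begin{equation*}
i=(r+1)b-(r-3)(ab-1),
\end{equation*}
which simplifies to $i=3b+1,\ 2b+1,\ b+1,\ 2b+2,\ b+3$ in the five cases, matching the dependences announced in the statement. At this point (iv) of Proposition \ref{prop:num} is automatically satisfied, so no further relation is gained.

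Finally, I would cut down the range of $b$. Since $Z$ is a Fano manifold of dimension $r$, Kobayashi--Ochiai (cf.\ Remark \ref{rem:fano}) gives $i\leq r+1$. Combined with $b\geq 1$, this forces $b=1$ in case $(r,a)=(4,2)$ (so $i=4$), $b\in\{1,2\}$ in case $(4,3)$, $b\in\{1,2,3,4\}$ in case $(4,4)$, $b\in\{1,2\}$ in case $(5,2)$, and $b\in\{1,2,3,4\}$ in case $(6,2)$, exactly reproducing the five items (i)--(v). There is no real obstacle here: the argument is a finite enumeration once the elementary inequalities $r\leq 2n+2$ and $i\leq r+1$ are invoked, which is presumably why the authors merely record it as a consequence of Proposition \ref{prop:num}.
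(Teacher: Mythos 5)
Your proposal is correct and is exactly the computation the paper has in mind when it says the proposition "easily follows from Proposition \ref{prop:num}": enumerate the pairs $(r,a)$ via $4\leq r\leq 2n+2=6$ (with $r=6$ forcing $a=2$) and $a\leq r/(r-3)$, read off $m=a(r-3)-2$ and $i=(r+1)b-(r-3)(ab-1)$, and bound $b$ by the Kobayashi--Ochiai inequality $i\leq r+1$. All five cases and the stated ranges of $b$ check out.
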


\begin{proof}
It easily follows from Proposition \ref{prop:num}.
\end{proof}

Let $g$ denote the sectional genus of $X\subset\p^r$.

\begin{proposition}[Fundamental formulae for $n=2$]\label{prop:ff n=2}
Let $\Phi:\p^r\da Z$ be a special birational transformation of type
$(a,b)$. If $n=2$ then:
\begin{enumerate}
\item[(i)] $s_1=2-2g-rd$
\item[(ii)] $s_2=c_2-(r+1)(2-2g)+\binom{r+1}{2}d$
\end{enumerate}
\end{proposition}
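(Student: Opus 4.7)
The plan is to specialize the machinery of Remark \ref{rem:s} to $n=2$ and convert the Chern-class data on the surface $X$ into the sectional invariants $g$ and $d$ via adjunction. Concretely, I would expand
\[ s(N_{X/\p^r}) = c(T_X)\cdot c(T\p^r_{|X})^{-1} = \bigl(1+c_1(T_X)+c_2(T_X)\bigr)\cdot\sum_{i\geq 0}(-1)^i\binom{r+i}{i}H_X^i \]
through codimension $2$ and read off the components $s_1(N_{X/\p^r})$ and $s_2(N_{X/\p^r})$.

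For (i), I would extract the codimension-$1$ component $s_1(N_{X/\p^r}) = c_1(T_X) - (r+1)H_X$, pair it with $H_X$, and use the adjunction formula $2g-2 = (K_X+H_X)\cdot H_X$ to substitute $c_1(T_X)\cdot H_X = -K_X\cdot H_X = 2-2g+d$. Since $H_X^2 = d$, this immediately yields $s_1 = (2-2g+d)-(r+1)d = 2-2g-rd$.

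For (ii), which by definition is already a number (it lives in codimension $n=2$ on the surface $X$), I would write down the codimension-$2$ component
\[ s_2(N_{X/\p^r}) = c_2(T_X) - (r+1)\, c_1(T_X)\cdot H_X + \binom{r+2}{2}H_X^2, \]
replace $c_1(T_X)\cdot H_X$ by $2-2g+d$ as before, identify $c_2(T_X)=c_2$ and $H_X^2=d$, and collapse coefficients via the identity $\binom{r+2}{2}-(r+1) = \binom{r+1}{2}$ to land on the stated expression.

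The only real obstacle is careful binomial-coefficient bookkeeping; no geometric input beyond adjunction and Remark \ref{rem:s} is needed, and the argument is entirely independent of the type $(a,b)$ or of any property of the birational transformation $\Phi$ itself.
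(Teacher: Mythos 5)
Your proof is correct and follows exactly the paper's own argument: expand $s(N_{X/\p^r})=c(T_X)\cdot c(T\p^r_{|X})^{-1}$ via Remark \ref{rem:s}, read off $s_1$ and $s_2$, and substitute $c_1\cdot H_X=2-2g+d$ from adjunction, with the same binomial identity $\binom{r+2}{2}-(r+1)=\binom{r+1}{2}$ finishing (ii). Your closing observation that the statement uses nothing about $\Phi$ beyond the smoothness of the surface $X\subset\p^r$ is also accurate.
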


\begin{proof} We deduce from Remark \ref{rem:s} that
$s(N_{X/\p^r})=(1+c_1+c_2)(1-(r+1)H_X+\binom{r+2}{2}H_X^2)$, whence $s_1=c_1\cdot
H_X-(r+1)H_X^2$ and $s_2=c_2-(r+1)c_1\cdot H_X+\binom{r+2}{2}H_X^2$. Note that $2g-2=H_X^2+H_X\cdot K_X$ by the adjunction formula, so we get
(i) and (ii).
\end{proof}

\textbf{Multisecant formulae for surfaces.} Let us recall some formulae, that we denote by $N_{k,r}$, concerning
the $k$-secant lines to a smooth surface
$X\subset\p^r$. Let $K^2:=K_X^2$ and $\chi:=\chi(\O_X)$. We start with the well-known double point formula, that gives the number of $2$-secant lines of $X\subset\p^r$ meeting a general $\p^{r-5}$ (so, in particular, $N_{2,4}=0$):
\begin{equation*}
    N_{2,r}=\frac{1}{2}(d^2-5d-10(g-1)-2K^2+12\chi)
\end{equation*}

Now we include several formulae due to Le Barz concerning higher $k$-secant lines (see \cite{le-barz}). Let
$X\subset\p^6$ be a smooth surface. The number of $3$-secant lines
meeting a general $4$-dimensional linear subspace is given by
\begin{equation*}
    N_{3,6}=\frac{1}{3}(d^3-12d^2+d(18\chi-18g-3K^2+50)-12(10\chi-11g-2K^2+11))
\end{equation*}

\begin{remark}\label{rem:scroll}
This formula does not work if $X\subset\p^6$ is a scroll over a curve.
\end{remark}

Let $X\subset\p^4$ be a smooth surface. The number of $3$-secant
lines passing a general point is given by
\begin{equation*}
    N_{3,4}=\frac{1}{6}(12\chi+d^3-6d^2+d(11-6g)+18(g-1))
\end{equation*}
If $X\subset\p^4$ is non-degenerate, we remark that $N_{3,4}=0$ if and only if $X\subset\p^4$ is either a
quintic elliptic scroll or it is contained in a quadric hypersurface (see \cite{aure}). The number of $4$-secant lines of $X\subset\p^4$ meeting a general line is given by
\begin{equation*}
    N_{4,4}=\frac{1}{8}(8\chi(2d-9)+d^4-10d^3+d^2(35-8g)+2d(28g-33)+4(g^2-25g+24))
\end{equation*}
and the number of $5$-secant lines of $X\subset\p^4$ meeting
a general plane is
\smallskip
\begin{center}
$N_{5,4}=\frac{1}{24}(24\chi(d^2-12d-2(g-22))+d^5-16d^4+d^3(95-12g)+52d^2(3g-5)+12d(2g^2-62g+43)-120(g^2-11g+10))$
\end{center}
\smallskip
Finally, the number of $6$-secant lines of $X\subset\p^4$ is given by
\smallskip
\begin{center}
$N_{6,4}=\frac{1}{144}(288\chi^2+24\chi(2d^3-39d^2+d(355-12g)+90(g-16))+d^6-21d^5+d^4(157-18g)+3d^3(116g-65)+2d^2(36g^2-1431g-1663)-12d(72g^2-1033g-774)-24(g^3-111g^2+854g -744))-\sum_j\binom{7+l_j}{6}$
\end{center}
\smallskip
where $l_j$ denotes the self-intersection of every line $L_j\subset X$.

\begin{remark}\label{rem:scroll P4}
The formulae $N_{5,4}$ and $N_{6,4}$ do not work if $X\subset\p^4$ is a scroll over a curve. There are just two non-degenerate scrolls in $\p^4$, the rational normal scroll and the quintic elliptic scroll (see \cite{lanteri}). The only degenerate scroll in $\p^4$ is $\p^1\times\p^1\subset\p^3$.
\end{remark}

\begin{proposition}[Maximal list for $n=2$]\label{prop:n=2}
Let $\Phi:\p^r\da Z$ be a special birational transformation of type
$(a,b)$. If $n=2$ then one of the following holds:
\begin{enumerate}
\item[(i)] $r=6$, $a=2$, $b=4$, $X$ is a scroll, $d=7$, $g=1$ and $z=1$;
\item[(ii)] $r=6$, $a=2$, $b=2$, $X$ is a scroll, $d=5$, $g=0$ and $z=5$;
\item[(iii)] $r=6$, $a=2$, $b=1$, $X$ is a scroll, $d=4$, $g=0$ and $z=14$;
\item[(iv)] $r=6$, $a=2$, $b=4$, $d=8$, $g=3$, $K^2=1$, $\chi=1$ and $z=1$;
\item[(v)] $r=6$, $a=2$, $b=3$, $d=7$, $g=2$, $K^2=3$, $\chi=1$ and $z=2$;
\item[(vi)] $r=6$, $a=2$, $b=2$, $d=6$, $g=1$, $K^2=6$, $\chi=1$ and $z=4$;
\item[(vii)] $r=6$, $a=2$, $b=1$, $d=5$, $g=1$, $K^2=5$, $\chi=1$ and $z=12$;
\item[(viii)] $r=5$, $a=2$, $b=2$, $X\subset\p^5$ is a Veronese surface and $z=1$;
\item[(ix)] $r=5$, $a=2$, $b=1$, $X\subset\p^4$ is a rational normal scroll and $z=5$;
\item[(x)] $r=4$, $a=4$, $b=4$, $d=10$, $g=11$, $K^2=5$, $\chi=5$ and $z=1$;
\item[(xi)] $r=4$, $a=4$, $b=2$, $d=9$, $g=8$, $K^2=-5$, $\chi=2$ and $z=3$;
\item[(xii)] $r=4$, $a=4$, $b=2$, $d=8$, $g=6$, $K^2=-1$, $\chi=2$ and $z=5$;
\item[(xiii)] $r=4$, $a=4$, $b=1$, $d=12$, $g=19$, $K^2=48$, $\chi=16$ and $z=4$;
\item[(xiv)] $r=4$, $a=4$, $b=1$, $d=10$, $g=12$, $K^2=12$, $\chi=7$ and $z=6$;
\item[(xv)] $r=4$, $a=4$, $b=1$, $d=9$, $g=9$, $K^2=2$, $\chi=4$ and $z=8$;
\item[(xvi)] $r=4$, $a=4$, $b=1$, $d=7$, $g=4$, $K^2=-2$, $\chi=1$ and $z=14$;
%\item[(xiv)] $r=4$, $a=4$, $b=1$, $d=9$, $g=7$ and $Z\subset\p^5$ is
%a quartic hypersurface \textbf{DOES NOT EXIST}
\item[(xvii)] $r=4$, $a=3$, $b=2$, $d=5$, $g=1$, $K^2=0$, $\chi=0$ and $z=1$;
\item[(xviii)] $r=4$, $a=3$, $b=1$, $d=6$, $g=4$, $K^2=0$, $\chi=2$ and $z=3$;
\item[(xix)] $r=4$, $a=3$, $b=1$, $d=5$, $g=2$, $K^2=1$, $\chi=1$ and $z=4$;
\item[(xx)] $r=4$, $a=2$, $b=1$, $X\subset\p^3$ is a quadric and $z=2$.
\end{enumerate}
\end{proposition}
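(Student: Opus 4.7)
The plan is to go through the five cases of Proposition~\ref{prop:num n=2} one by one and, in each, combine three kinds of numerical input: the fundamental formulae of Propositions~\ref{prop:ff} and~\ref{prop:ff n=2}, the coindex bound of Remark~\ref{rem:fano}, and the multisecant formulae $N_{k,r}$ just recorded. Once $(r,a,b)$ is fixed, $i$ is determined, so Remark~\ref{rem:fano} gives a short explicit list of possible values of $z$. With $z$ in hand, Proposition~\ref{prop:ff}(iii) fixes $e$ in terms of $d$, and Remark~\ref{rem:e} forces $e=0$ unless $m=r-2$. Substituting Proposition~\ref{prop:ff n=2}(i) into Proposition~\ref{prop:ff}(ii) yields one linear equation in $(d,g)$, and substituting Proposition~\ref{prop:ff n=2}(ii) together with Noether's formula $c_2=12\chi-K^2$ into Proposition~\ref{prop:ff}(i) yields one linear equation in $(d,g,K^2,\chi)$. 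Combined with $d<a^{r-2}$ and positivity/integrality of all the invariants, this already reduces each subcase to a small finite list.

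The extra ingredient that pins the list down is the multisecant formulae. Any $k$-secant line of $X$ with $k>a$ would be mapped by $\Phi$ to a rational curve of impossible degree $a-k<0$, so $N_{k,r}=0$ for all such $k$ where the formula applies: this yields $N_{3,6}=0$ in case (i) of the numerology, $N_{5,4}=N_{6,4}=0$ in case (iii), $N_{4,4}=N_{5,4}=N_{6,4}=0$ in case (iv), and similarly elsewhere. Together with the non-negativity $N_{k,r}\ge 0$ for $k\le a$ and the geometric count of $a$-secant lines implicit in Proposition~\ref{prop:sec} (which says $\sigma(E_Z)=\Sec_a(X)$ is a hypersurface of degree $ab-1$ carrying a controlled fibration in $(r-m-1)$-planes), these give enough Diophantine equations to force $(d,g,K^2,\chi)$ into exactly the tuples listed. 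The scroll subcases (i)-(iii) and (ix), where $N_{3,6}$, $N_{5,4}$ and $N_{6,4}$ do not apply (Remarks~\ref{rem:scroll} and~\ref{rem:scroll P4}), must be isolated at the outset: for $r=6$ I would exploit $\chi(\O_X)=1-g$ valid for any scroll $\p(\E)\to C$ to solve the linear fundamental system directly, while for $r=4$ the classification recalled in Remark~\ref{rem:scroll P4} leaves only the rational normal cubic, the quintic elliptic, and the degenerate $\p^1\times\p^1$ to be checked by hand.

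The main obstacle is case (iii) of the numerology, $r=4$ and $a=4$, where four unknowns $d,g,K^2,\chi$ must be extracted from only two linear relations together with the chain of multisecant conditions, with $d$ possibly as large as $15$ and $b$ ranging over $\{1,2,3,4\}$. In particular the value $b=3$, absent from the maximal list, has to be excluded entirely by verifying that the associated Diophantine system has no admissible solution. The secondary technical difficulty is $N_{6,4}$, whose right-hand side contains the correction $\sum_j\binom{7+l_j}{6}$ over the lines $L_j\subset X$; this term must be handled by a preliminary bound on the number and self-intersection of lines contained in a surface of the relevant degree, again via the scroll classification.
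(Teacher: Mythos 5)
Your overall strategy is the paper's: run through the cases of Proposition~\ref{prop:num n=2}, extract linear relations in $(d,g,K^2,\chi)$ from Propositions~\ref{prop:ff} and~\ref{prop:ff n=2} together with Noether's formula, use Remark~\ref{rem:fano} to restrict $z$, isolate the scroll cases, and close the system with the multisecant formulae (for $r=6$ the paper uses $N_{2,6}=\deg\Sec_2(X)=2b-1$ and $N_{3,6}=0$; for $r=4$ it uses $N_{2,4}=0$ together with $N_{5,4}=0$ when $a=4$ and $N_{4,4}=0$ when $a=3$). Two remarks on efficiency: the paper never needs $N_{6,4}$ for $n=2$, so the correction term $\sum_j\binom{7+l_j}{6}$ that you single out as a difficulty simply does not enter; and for the scroll normalization you need $K^2=8(1-g)$ in addition to $\chi=1-g$ (or else the double point formula) to make the linear system determined.

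The genuine gap is case (ii) of the numerology, $r=5$, $a=2$, which is not Diophantine and which your plan treats as such. For $b=1$ the invariants $d=4$, $g=1$, $K^2=4$, $\chi=1$, $z=4$ (i.e.\ the quartic Del Pezzo surface, a c.i.\ $(2,2)$ in $\p^4\subset\p^5$) satisfy all three fundamental formulae, the vanishing of the double point formula $N_{2,5}=0$, the coindex constraint $z\in\{3,4,5\}$, and every integrality condition in your toolkit; yet it must not appear in the maximal list, which records only the rational normal scroll for $b=1$. What excludes it is purely geometric: Proposition~\ref{prop:sec}(ii) says that through a general point of the hyperplane $\Sec_2(X)$ the union of the $2$-secant lines is a $2$-plane meeting $X$ in a conic, so $X$ must be swept out by a $2$-dimensional family of conics whose planes fill $\p^4$ --- true for the rational normal scroll, false for the Del Pezzo surface, whose conics form only a $1$-dimensional family. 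Likewise the $b=2$ subcase is settled in the paper by Severi's classification of secant-defective surfaces \cite{severi} (forced because $\dim\Sec_2(X)=4$), not by equations. You do mention the ``controlled fibration in $(r-m-1)$-planes'' of Proposition~\ref{prop:sec}, but you cast it as a source of Diophantine equations; to prove the proposition as stated you must deploy it (and Severi's theorem) as an actual geometric classification step in the $r=5$ case, otherwise your list acquires a spurious entry.
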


\begin{proof}
We proceed according to Proposition \ref{prop:num n=2}. Assume $r=6$ and $a=2$. If $X\subset\p^6$ is a scroll then $K^2=8(1-g)$ and
$\chi=1-g$. So we get $z=50-9d+14g$, $bz=30-4d+2g$ and $b^2z-e=16-d$ by Propositions \ref{prop:ff} and \ref{prop:ff n=2}, whence $z=(160-19d)/(7b-1)$. If $b=4$ then $i=7$ and hence $z=1$, so $d=7$ and $g=1$, giving (i). If $b=3$  then $i=6$ and hence $z=2$, but $d\notin\n$. If $b=2$ then $i=5$ and hence $z\in\{3,4,5\}$ by the classification of Del Pezzo manifolds \cite{fuj}, but the only admissible case is $z=5$, $d=5$ and $g=0$, giving (ii). If $b=1$ then $i=4$ and hence $z\in\{4,6,8,10,12,14,16\}$ by the classification of Mukai manifolds \cite{muk}, but the only admissible case is $z=14$, $d=4$ and $g=0$. This gives (iii).

If $X\subset\p^6$ is not a scroll we deduce from Propositions
\ref{prop:ff} and \ref{prop:ff n=2} the fundamental formulae
$z=54-9d+10g+K^2-12\chi$, $bz=30-4d+2g$ and $b^2z-e=16-d$. Furthermore,
$N_{2,6}=\deg(\Sec_2(X))=2b-1$ and hence
$d^2-5d-5(2g-2)-2K^2+12\chi=4b-2$. In addition $N_{3,6}=0$, as
$X\subset\p^6$ is defined by quadrics, so
$d^3-12d^2+d(18\chi-18g-3K^2+50)-12(10\chi-11g-2K^2+11)=0$. Hence
we can eliminate $g=(bz+4d-30)/2$, $K^2=-z-4b+d^2-14d+66$ and
$\chi=(z(5b-2)-4b+d^2-3d-30)/12$, so we get
$$(b(3d-32)+8)z=4b(3d-28)-d^3+7d^2+70d-456$$ If $b=4$ then $i=7$ and hence $z=1$, so we deduce $d=8$ and hence $g=3$, $K^2=1$ and $\chi=1$, giving (iv). If
$b=3$ then $i=6$ and hence $z=2$, so we deduce $d=7$ and hence $g=2$, $K^2=3$ and $\chi=1$. This gives (v). If $b=2$ then $i=5$ and hence $z\in\{3,4,5\}$. If $z\in\{3,5\}$ then $d\notin\n$. If $z=4$ then $d=6$, whence $g=1$, $K^2=6$, $\chi=1$ and we get (vi). If $b=1$ then $i=4$ and hence $z\in\{4,6,8,10,12,14,16\}$. If $z\in\{4,6,8,10,14\}$ then $d\notin\n$. If $z=12$ then $d=5$. In this case, $g=1$, $K^2=5$, $\chi=1$ and we get (vii). If $z=16$ then $d=4$, but in this case $\chi\notin\z$.

Consider now $r=5$ and $a=2$. In this case, we can argue geometrically instead of computing the numerical invariants. As $\dim(\Sec_2(X))=4$, we deduce from Severi's theorem \cite{severi} that either $X\subset\p^5$ is a Veronese surface, giving (viii), or else $X\subset\p^4$. Furthermore, $X\subset\p^4$ is swept out by a
$2$-dimensional family of conics by Proposition \ref{prop:sec}(ii),
whence $X\subset\p^4$ is a rational normal scroll and we get (ix).

Assume $r=4$. If $a=4$ then we get from Propositions
\ref{prop:ff} and \ref{prop:ff n=2} the fundamental formulae
$z=234-42d+22g+K^2-12\chi$, $bz=62-8d+2g$ and $b^2z-e=16-d$. Moreover, $N_{2,4}=0$
and hence $d^2-5d-10(g-1)-2K^2+12\chi=0$. Then we obtain
$g=(bz+8d-62)/2$, $K^2=z(6b-1)+d^2+d-128$ and
$\chi=(z(17b-2)+d^2+47d-576)/12$. Note that $X\subset\p^4$ cannot be a scroll by Remark \ref{rem:scroll P4}, since they do not have any $4$-secant line. So $N_{5,4}=0$ as $X\subset\p^4$ is defined by quartics, and hence we get
\begin{center}
$2bz^2(b(3d-32)+2)-2z(b(3d^3-103d^2+1185d-4568)+2(d^2-20d+106))+d^5-62d^4+1529d^3-18764d^2+114688d-279552=0$
\end{center}
after substituting $g$, $K^2$ and $\chi$. We can assume $d<16$ by Proposition \ref{prop:ff}. If $b=4$ then $i=5$ and hence $z=1$, so we deduce $d=10$. In this case, $g=11$, $K^2=5$ and $\chi=5$.
This gives (x). If $b=3$ then $i=4$ and hence $z=2$, so we deduce $d=10$ but $\chi\notin\z$. If $b=2$ then $i=3$ and hence $z\in\{3,4,5\}$. If $z=3$ then either $d=9$, whence $g=8$, $K^2=-5$ and $\chi=2$, giving (xi), or else $d=10$, and hence $\chi\notin\z$. If $z=4$ then $d\notin\n$ . If $z=5$ then $d=8$, whence $g=6$, $K^2=-1$ and $\chi=2$, giving (xii). Finally, if $b=1$ then $i=2$ and hence $z\in\{4,6,8,10,12,14,16,18\}$. If $z=4$ then $d=12$, whence $g=19$, $K^2=48$ and $\chi=16$, giving (xiii). If $z=6$ then $d=10$, whence $g=12$, $K^2=12$ and $\chi=7$. This gives (xiv). If $z=8$ then $d=9$, whence $g=9$, $K^2=2$ and $\chi=4$, giving (xv). If $z=10$ then $d=8$ but $\chi\notin\z$. If $z=14$ then $d=7$, whence $g=4$, $K^2=-2$ and $\chi=1$, giving (xvi). If $z\in\{12,16,18\}$ then $d\notin\n$.

Now we consider $a=3$. Then we get from Propositions \ref{prop:ff} and \ref{prop:ff n=2} the fundamental formulae
$z=67-16d+14g+K^2-12\chi$, $bz=25-5d+2g$ and $b^2z-e=9-d$. Since
$N_{2,4}=0$, we deduce $g=(bz+5d-25)/2$, $K^2=z(2b-1)+d^2-11d+27$
and $\chi=(z(9b-2)+d^2+8d-81)/12$. As
$X\subset\p^4$ is defined by cubics, $N_{4,4}=0$ and we deduce
$$3b^2z^2-2z(3b(2d^2-25d+77)+2(2d-9))+3d^4-86d^3+914d^2-4266d+7371=0$$
If $b=2$ then $i=5$ and hence $z=1$, so $d=5$. In this case, $g=1$, $K^2=0$ and $\chi=0$, giving (xvii). On the other hand, if $b=1$ then $i=3$ and hence $z\in\{3,4,5\}$. If $z=3$ then $d=6$ and hence $g=4$, $K^2=0$ and $\chi=2$, giving (xviii). If $z=4$ then $d=5$, whence $g=2$, $K^2=1$ and $\chi=1$, giving (xix). If $z=5$ then $d\notin\n$.

Finally, if $a=2$ then $b=1$ so $X\subset\p^3$ is a quadric by Proposition \ref{prop:sec} and we get (xx).
\end{proof}

Let us study first the cases where $Z$ is singular in Proposition \ref{prop:n=2}.

\begin{lemma}\label{lem:n=2 singular}
In Proposition \ref{prop:n=2}, $Z$ turns out to be singular in cases (iii), (v), (vi), (vii), (xiii), (xiv) and (xviii). More precisely:
\begin{itemize}
\item In case (iii), $X\subset\p^5$ is a rational normal scroll and $Z\subset\p^{12}$ is a singular linear section of $\g(1,5)\subset\p^{14}$.
\item In case (v), $X\subset\p^6$ is the blowing-up of $\p^2$ along six points embedded by $|4L-2E_0-\sum_{i=1}^5 E_i|$ and $Z\subset\p^7$ is a quadric cone of rank at most $6$.
\item In case (vi), $X\subset\p^6$ is a Del Pezzo surface and $Z\subset\p^8$ is a singular c.i. $(2,2)$.
\item In case (vii), $X\subset\p^5$ is a Del Pezzo surface and $Z\subset\p^{11}$ is a singular linear section of the $10$-dimensional spinor variety $S_4\subset\p^{15}$.
\item In case (xiii), $X\subset\p^4$ is a c.i. $(3,4)$ and $Z\subset\p^5$ is a singular quartic hypersurface.
\item In case (xiv), $X\subset\p^4$ is linked to a quadric in a c.i. $(3,4)$ and $Z\subset\p^6$ is a singular c.i. $(2,3)$.
\item In case (xviii), $X\subset\p^4$ is a c.i. $(2,3)$ and $Z\subset\p^5$ is a singular cubic hypersurface.
\end{itemize}
\end{lemma}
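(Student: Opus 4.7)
The plan is to handle the seven cases in parallel using a common two-step procedure: first identify $X\subset\p^r$ from its numerical invariants, and then compute $Z=\Phi(\p^r)$ together with a geometric witness to its singularities.

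The identification of $X$ splits into three families. For the complete-intersection cases (xviii) and (xiii), the invariants $(d,g,K^2,\chi)$ match only a c.i. $(2,3)$ in $\p^4$ (a K3 surface) and a c.i. $(3,4)$ in $\p^4$, respectively. Case (xiv) is handled by Peskine--Szpiro liaison inside a c.i. $(3,4)$: linking a smooth quadric surface yields exactly $d=10$, $g=12$, $K^2=12$, $\chi=7$, and the residual scheme is the required $X$. Cases (iii), (vi), (vii) are read off from the classification of smooth surfaces of small degree: (iii) is the rational normal scroll of degree $4$ (degenerate in $\p^5$), (vi) is the anticanonical Del Pezzo sextic in $\p^6$, and (vii) is the Del Pezzo quintic in $\p^5$ (degenerate in $\p^6$). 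Case (v) is the subtlest: a non-scroll surface with $d=7$, $g=2$, $\chi=1$ in $\p^6$ must be rational with $p_g=q=0$ and $K^2=3$, and inspection of the numerically admissible very ample linear systems on the blowing-up of $\p^2$ at six points leaves only $|4L-2E_0-E_1-\cdots-E_5|$.

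For the determination of $Z$ and the detection of its singularities I would proceed case by case. In (xiii), (xiv) and (xviii), Example \ref{ex:c.i} combined with Proposition \ref{prop:c.i} realizes $Z$ as a hypersurface or complete intersection whose defining equations inherit a distinguished singular point from the lower-degree generators of $\I_X$ already present in the linear system $|\I_X(a)|$. In the degenerate-embedding cases (iii) and (vii), the splitting $H^0(\I_X(2))=H^0(\I_{X/\p^{r-1}}(2))\oplus \ell\cdot H^0(\O_{\p^r}(1))$, where $\ell$ cuts out the hyperplane $\p^{r-1}\supset X$, identifies $Z$ as a special linear section of $\g(1,5)\subset\p^{14}$ and of the spinor tenfold $S_4\subset\p^{15}$ respectively, and the non-generality forced by the factor $\ell$ produces the required singularity. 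In case (vi), the pencil of quadrics through the Del Pezzo sextic in $\p^6$ is the one used in Example \ref{ex:c.i}, but Proposition \ref{prop:c.i} shows that the resulting c.i. $(2,2)$ in $\p^8$ acquires a singular point coming from a quadric of non-maximal rank in the pencil. Finally, case (v) is by direct computation: one shows $h^0(\I_X(2))=8$ so that $Z\subset\p^7$, and since $z=2$ the image $Z$ is a quadric hypersurface; analysing the rank of its defining symmetric matrix (equivalently, the second fundamental form of $X$ at a general point) then shows the rank is at most $6$.

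The main obstacle will be case (v), where no classical description of $X$ is immediately available and the singularity of $Z$ must be verified by an explicit rank computation. For the other cases, once $X$ is identified the argument reduces to matching against the constructions of Section \ref{section:ex}, the singularities arising as the precise obstruction to the generality assumption built into those constructions.
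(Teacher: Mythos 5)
Your identification of $X$ in each case is essentially the paper's, and your treatment of cases (xiii), (xiv) and (xviii) via Example \ref{ex:c.i} and Proposition \ref{prop:c.i} is exactly the route the paper takes. Your handling of (iii) and (vii) — splitting off the linear form cutting out the span of $X$ and recognizing $Z$ as a special linear section of $\g(1,5)$, resp.\ of $S_4$ — also agrees with the paper, which obtains these as non-general linear sections of Example \ref{ex:quadrics}(i) and (ii); just note that ``non-generality of the section'' does not by itself imply singularity, which still has to be checked (the paper does this by direct computation). The genuine problems are in cases (v) and (vi).

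In case (vi) the Del Pezzo sextic $X\subset\p^6$ is nondegenerate and the transformation has $b=2$, so Example \ref{ex:c.i} simply does not apply: that construction requires $I_X$ to contain a generator of degree $a-1$ (here a linear form, i.e.\ $X$ degenerate) and always produces maps of type $(a,1)$. Moreover Proposition \ref{prop:c.i} asserts the opposite of what you invoke — for generic data with $r\geq 2c-2$ (here $6\geq 2$) the resulting c.i.\ $(2,\dots,2)$ is \emph{smooth}. The paper instead realizes this case as a double hyperplane section of the Severi Cremona transformation of $\p^8$ attached to $\p^2\times\p^2$ (Example \ref{ex:quadrics}(iv)). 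In case (v) you correctly flag the difficulty but then leave the key step — that the quadric $Z\subset\p^7$ has rank at most $6$ — as an unspecified ``explicit rank computation,'' and the proposed identification of that rank with the second fundamental form of $X$ is unjustified (the second fundamental form controls the quadrics through $X$, not the single quadric containing $Z=\Phi(\p^6)$). The paper's argument is geometric and you would need something like it: $X$ carries the $3$-dimensional family of quintic elliptic curves $|3L-\sum_{i=0}^5E_i|$, each spanning a $\p^4$ on which $\Phi$ restricts to the classical quadro-cubic Cremona transformation of $\p^4$, so $Z$ is swept out by a $3$-dimensional family of $\p^4$'s; a quadric sixfold of rank $\geq 7$ contains no $\p^4$, whence the rank bound. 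As it stands, case (v) is not proved in your proposal.
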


\begin{proof}
The classification of $X\subset\p^r$ is well known in cases (iii), (vi), (vii) and (xviii). In case (v), it was classified in \cite{ion}. In case (xiii), it is a c.i. by \cite{g-p}. Finally, in case (xiv) the classification follows from \cite[Theorem 0.1]{th-ranestad}. Now we focus on $Z$. In cases (iii), (vi) and (vii) the description of $Z$ follows from a direct computation taking a linear section in Example \ref{ex:quadrics} (i), (iv) and (ii), respectively. In cases (xiii), (xiv) and (xviii) it follows from Example \ref{ex:c.i} and Proposition \ref{prop:c.i}. Finally, in case (v) we remark that $X\subset\p^6$ contains a $3$-dimensional family of quintic elliptic curves corresponding to the curves in the linear system $|3L-\sum_{i=0}^5 E_i|$. The restriction of $\Phi$ to a $\p^4$ containing a quintic elliptic curve gives a Cremona transformation of type $(2,3)$. So $Z$ is swept out by a $3$-dimensional family of $\p^4$'s, and hence we deduce that it is a quadric cone of rank at most $6$.
\end{proof}

The main result of this section is the following:

\begin{theorem}\label{thm:n=2}
Let $\Phi:\p^r\da Z$ be a special birational transformation of type
$(a,b)$. If $n=2$ then one of the following holds:
\begin{enumerate}
\item[(I)] $r=6$, $(a,b)=(2,4)$, $X\subset\p^6$ is a septic elliptic scroll of invariant $-1$ and $Z=\p^6$;
\item[(II)] $r=6$, $(a,b)=(2,2)$, $X\subset\p^6$ is a rational normal scroll and $Z=\g(1,4)\subset\p^9$;
\item[(III)] $r=6$, $(a,b)=(2,4)$, $X\subset\p^6$ is the blowing-up of $\p^2$ along eight points embedded by $|4L-\sum_{i=1}^8 E_i|$ and $Z=\p^6$;
\item[(IV)] $r=5$, $(a,b)=(2,2)$, $X\subset\p^5$ is a Veronese surface and $Z=\p^5$;
\item[(V)] $r=5$, $(a,b)=(2,1)$, $X\subset\p^4$ is a rational normal scroll and $Z\subset\p^8$ is a hyperplane section of $\g(1,4)\subset\p^9$;
\item[(VI)] $r=4$, $(a,b)=(4,4)$, $X\subset\p^4$ is linked to a Bordiga surface in a c.i. $(4,4)$ and $Z=\p^4$;
\item[(VII)] $r=4$, $(a,b)=(4,2)$, $X\subset\p^4$ is a non-minimal K3 surface with five $(-1)$-lines and $Z\subset\p^5$ is a cubic hypersurface;
\item[(VIII)] $r=4$, $(a,b)=(4,2)$, $X\subset\p^4$ is a non-minimal K3 surface with a $(-1)$-line and $Z\subset\p^7$ is a linear section of $\g(1,4)\subset\p^9$;
\item[(IX)] $r=4$, $(a,b)=(4,1)$, $X\subset\p^4$ is linked to a cubic scroll in a c.i. $(3,4)$ and $Z\subset\p^7$ is a c.i. $(2,2,2)$;
\item[(X)] $r=4$, $(a,b)=(4,1)$, $X\subset\p^4$ is the blowing-up of $\p^2$ along eleven points embedded by $|6L-\sum_{i=1}^5 E_i-\sum_{i=6}^{11} 2E_i|$ and $Z\subset\p^{10}$ is a linear section of $\g(1,5)\subset\p^{14}$;
\item[(XI)] $r=4$, $(a,b)=(3,2)$, $X\subset\p^4$ is a quintic elliptic scroll and $Z=\p^4$;
\item[(XII)] $r=4$, $(a,b)=(3,1)$, $X\subset\p^4$ is a quintic Castelnuovo surface and $Z\subset\p^6$ is a c.i. $(2,2)$;
\item[(XIII)] $r=4$, $(a,b)=(2,1)$, $X\subset\p^3$ is a quadric and $Z\subset\p^5$ is a quadric hypersurface.
\end{enumerate}
\end{theorem}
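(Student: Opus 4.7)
The plan is to combine Proposition \ref{prop:n=2} with Lemma \ref{lem:n=2 singular} to cut the maximal list of $20$ numerical cases down to the $13$ cases (I)--(XIII) in which $Z$ has a chance of being smooth. For each surviving case the numerical invariants $(r,a,b,d,g,K^2,\chi)$ are fixed, so the proof splits into two steps: (a) identify $X\subset\p^r$ up to projective equivalence, and (b) verify that $Z=\Phi(\p^r)$ is a smooth prime Fano manifold of the stated type.

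For step (a) I would treat the cases in groups of increasing difficulty. Case (XIII) is the classical case of a quadric in $\p^3$; case (IV) is the Veronese surface, forced by Severi's theorem since $\dim\Sec_2(X)=4$ as in the proof of Proposition \ref{prop:n=2}(viii); and cases (II), (V) are the unique non-degenerate rational normal scrolls of their degree and sectional genus. Cases (I) and (XI) (elliptic scrolls) follow from the classification of linearly normal elliptic scrolls, relying on \cite{aure} as in the proof of Theorem \ref{thm:m=1}. Case (XII) is obtained by liaison with a plane in a complete intersection $(2,3)$, again as in Theorem \ref{thm:m=1}. Cases (VI) and (IX) require liaison in codimension two: in (VI) the surface is residual to a Bordiga surface in a c.i.\ $(4,4)$, and in (IX) residual to a cubic scroll in a c.i.\ $(3,4)$; in both cases the residual scheme is identified via the classification of surfaces of small degree and the numerical data from the maximal list. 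The harder cases (III), (VII), (VIII), (X) I would attack with the Beilinson spectral sequence on $\p^r$ applied to a suitable twist of $\I_X$; the intermediate cohomology groups are determined by $(d,g,K^2,\chi)$ together with Riemann--Roch and vanishing on $X$, and the resulting resolution of $\I_X$ identifies $X$ concretely, either as a blow-up of $\p^2$ along a prescribed configuration of points (cases (III) and (X)) or as a non-minimal K$3$ surface in $\p^4$ with the predicted number of $(-1)$-lines (cases (VII) and (VIII)).

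For step (b), in every one of the thirteen cases the image $Z$ matches one of the model examples introduced in Section \ref{section:ex}: namely $\p^r$ itself, a quadric hypersurface, a complete intersection of quadrics, or a linear section of a Grassmannian of lines $\g(1,k)$. Once $X\subset\p^r$ has been pinned down, the dimension $h^0(\I_X(a))=\alpha+1$ fixes the ambient $\p^\alpha$, and the embedded $Z\subset\p^\alpha$ is identified with the target of one of these model constructions by matching $(a,b,i,z)$; smoothness of $Z$ then follows from the general construction in Section \ref{section:ex} rather than from a case-by-case local analysis.

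The main obstacle I expect is step (a) in cases (III), (VII), (VIII), (X), where the Beilinson spectral sequence must be set up correctly, the minimal free resolution of $\I_X$ extracted, and the existence and uniqueness of the embedding $X\subset\p^r$ with the prescribed invariants verified; some care is also needed in cases (VI) and (IX) to ensure that the liaison partner is smooth and irreducible and that the residual scheme is in fact the claimed surface rather than a degeneration of it. Cases (III) and (X), involving specific point configurations on $\p^2$, are particularly delicate because the embedding line bundle is very ample only for a general choice of points, so one must check genericity \emph{a posteriori} from the existence of $\Phi$.
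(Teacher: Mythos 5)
Your proposal is correct and follows the same skeleton as the paper's proof: reduce to the thirteen surviving cases of Proposition \ref{prop:n=2} via Lemma \ref{lem:n=2 singular}, identify $X\subset\p^r$ case by case, and then recognize $Z$ by matching each case with one of the model constructions of Section \ref{section:ex} (so that smoothness of $Z$ comes for free from the general construction rather than a local analysis) — step (b) of your plan is exactly what the paper does. The one genuine divergence is in step (a) for the four ``hard'' cases (III), (VII), (VIII), (X): the paper does not run a Beilinson spectral sequence argument here, but instead simply invokes the existing classification of smooth surfaces of low degree in $\p^4$ and $\p^6$ (Ionescu for degrees $7$ and $8$, Alexander--Ranestad and Okonek for the degree $8$ and $9$ surfaces in $\p^4$, Truong--Ranestad for degree $10$), reserving the Beilinson technique for the $n=3$ case (Lemmas \ref{lemma:12} and \ref{lemma:13}) where the literature is incomplete. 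Your route is more self-contained but substantially longer, and you should be aware of two points: first, a Beilinson resolution of ${\mathcal I}_X$ by itself does not immediately yield the geometric description required in the statement (e.g.\ ``blow-up of $\p^2$ along eight points embedded by $|4L-\sum E_i|$'' or ``non-minimal K3 with five $(-1)$-lines''); passing from the resolution to that description still requires adjunction-theoretic arguments of essentially the same depth as the cited classification results. Second, for case (III) the computation lives on $\p^6$ rather than $\p^4$, which makes the cohomology table considerably heavier; the paper's reliance on the degree $\leq 8$ classification in $\p^6$ sidesteps this entirely. Neither point is a gap — both can be carried out — but the paper's choice of citing the classification literature is the economical one, and in case (VII) it even records the resulting resolution $0\to T_{\p^4}(-6)\oplus\O_{\p^4}(-5)\to\O_{\p^4}(-4)^{\oplus6}\to {\mathcal I}_X\to 0$ only as a byproduct needed to match $Z$ with Example \ref{ex:hypersurfaces}.
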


\begin{proof}
We proceed according to Proposition \ref{prop:n=2} and Lemma \ref{lem:n=2 singular}. The classification of $X\subset\p^r$ follows from the classification of surfaces of small degree: In case (i), we get (I) (see \cite[p. 304]{c-k} to compute the invariant of the scroll). In case (ii), we obtain (II). In case (iv), we get (III) by \cite{ion3}. Cases (viii) and (ix) yield cases (IV) and (V), respectively. In case (x), we get (VI) by \cite[Theorem 0.1]{th-ranestad}. In case (xi), we obtain (VII) by \cite[Theorem 0.1]{a-r}. Case (xii) yields (VIII) by \cite[Theorem 0.1]{oko}. In case (xv), we get (IX) by \cite[Theorem 0.1]{a-r} and case (xvi) gives (X) by \cite{ion}. Finally, cases (xvii), (xix) and (xx) yield (XI)-(XIII), respectively.

The description of $Z$ is well known in cases (I), (II), (III), (IV), (VI) and (XI). See Remark \ref{rem:cremona}(i), Example \ref{ex:quadrics}(iii), \cite{s-t3} or Remark \ref{rem:cremona}(iii) with $k=3$ and $d=2$, Example \ref{ex:quadrics}(iv), Remark \ref{rem:cremona}(ii) and the paragraph before Example \ref{ex:hypersurfaces}, respectively. Case (V) follows from a direct computation taking a hyperplane section in Example \ref{ex:quadrics}(i). For cases (IX), (XII) and (XIII), see Example \ref{ex:c.i} and Proposition \ref{prop:c.i}. For cases (X) and (VIII) see Examples \ref{ex:g(1,r+1)} and \ref{ex:g(1,r)}, respectively. Finally, for case (VII) see Example \ref{ex:hypersurfaces} and Remark \ref{rem:hypersurfaces} (note that ${\mathcal I}_X$ is given by a resolution $0\to T_{\p^4}(-6)\oplus\O_{\p^4}(-5)\to\O_{\p^4}(-4)^{\oplus6}\to {\mathcal I}_X\to 0$ by \cite[Theorem 0.3.c)]{a-r}).
\end{proof}

In particular, we recover the following result (see \cite[Theorem 3.3]{c-k}):

\begin{corollary}[Crauder-Katz]
Let $\Phi:\p^r\da\p^r$ be a special Cremona transformation of type
$(a,b)$. If $n=2$ then either $r=6$, $(a,b)=(2,4)$ and $X\subset\p^6$
is either an octic rational surface or a septic elliptic scroll of invariant $-1$, or
$r=5$, $(a,b)=(2,2)$ and $X\subset\p^5$ is a Veronese surface, or $r=4$,
$(a,b)=(3,2)$ and $X\subset\p^4$ is a quintic elliptic scroll, or
$r=4$, $(a,b)=(4,4)$ and $X\subset\p^4$ is a determinantal surface given by the $4\times4$-minors of a
$4\times5$-matrix of linear forms.
\end{corollary}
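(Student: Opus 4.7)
The plan is to obtain the corollary as a direct specialization of Theorem \ref{thm:n=2} to the sub-case $Z=\p^r$, i.e., $z=1$, or equivalently (by Remark \ref{rem:fano}) $i=r+1$.

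First, I would sift the numerology of Proposition \ref{prop:num n=2} for the constraint $i=r+1$. The five alternatives there admit a unique value of $b$ realising $Z=\p^r$, as follows: $(r,a)=(6,2)$ forces $b=4$; $(r,a)=(5,2)$ forces $b=2$; $(r,a)=(4,4)$ forces $b=4$; $(r,a)=(4,3)$ forces $b=2$; while the last alternative $(r,a)=(4,2)$ corresponds to $m=0$ and gives only $i=4<r+1$, so it must be discarded (consistently with Theorem \ref{thm:m=0}, which forces $Z$ to be a quadric in that range).

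Next, I would collect from Theorem \ref{thm:n=2} the entries corresponding to each admissible $(r,a,b)$: the quadruple $(6,2,4)$ contributes cases (I) and (III); $(5,2,2)$ contributes case (IV); $(4,3,2)$ contributes case (XI); and $(4,4,4)$ contributes case (VI). These five entries are precisely the list in the Crauder--Katz statement, once the names are matched. The bookkeeping identification I need to record is that the surface in (III), the blow-up of $\p^2$ at eight points embedded by $|4L-\sum_{i=1}^{8}E_i|$, has degree $16-8=8$ and sectional genus $3$, so it is the \emph{octic rational surface} of \cite{c-k}; and that the surface in (VI), linked to a Bordiga surface in a complete intersection of two quartics, is the determinantal surface defined by the $4\times 4$-minors of a generic $4\times 5$-matrix of linear forms.

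The only mildly non-trivial point is this last determinantal description, but it is standard liaison in codimension $2$: a Bordiga surface is itself cut out by the $3\times 3$-minors of a $3\times 4$-matrix of linear forms, and the mapping cone (with its residual resolution) obtained from linking it through a complete intersection $(4,4)$ produces the $4\times 5$-presentation for the residual surface. Thus no further obstacle is expected; the real work — excluding all other numerical types, ruling out the cases where $Z$ is singular, and actually realising every case geometrically — is already carried out in Proposition \ref{prop:n=2}, Lemma \ref{lem:n=2 singular} and Theorem \ref{thm:n=2}, so the corollary follows purely by inspection.
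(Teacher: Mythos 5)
Your proposal is correct and coincides with the paper's (implicit) argument: the corollary is stated there as an immediate consequence of Theorem \ref{thm:n=2}, obtained by retaining exactly the entries with $Z=\p^r$ (cases (I), (III), (IV), (VI) and (XI)), which is precisely your sifting of Proposition \ref{prop:num n=2} by $i=r+1$ followed by inspection of the theorem. Your two bookkeeping identifications are also right: the octic rational surface is the blow-up of $\p^2$ at eight points under $|4L-\sum_{i=1}^{8}E_i|$, and Lemma \ref{lem:liaison}(ii) applied to the Bordiga resolution $0\to\O_{\p^4}(-4)^{\oplus 3}\to\O_{\p^4}(-3)^{\oplus 4}\to{\mathcal I}_{X'}\to 0$ through a c.i. $(4,4)$ yields $0\to\O_{\p^4}(-5)^{\oplus 4}\to\O_{\p^4}(-4)^{\oplus 5}\to{\mathcal I}_{X}\to 0$, i.e. the $4\times 5$ determinantal presentation.
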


\section{Case $n=3$}

\begin{proposition}[Numerology for $n=3$]\label{prop:num n=3}
Let $\Phi:\p^r\da Z$ be a special birational transformation of type
$(a,b)$. If $n=3$ then one of the following holds:
\begin{enumerate}
\item[(i)] $r=8$, $a=2$, $b\in\{1,2,3,4,5\}$, $i=b+4$ and $m=6$;
\item[(ii)] $r=7$, $a=2$, $b\in\{1,2\}$, $i=2b+3$ and $m=4$;
\item[(iii)] $r=6$, $a=3$, $b=\{1,2,3,4,5\}$, $i=b+2$ and $m=4$;
\item[(iv)] $r=6$, $a=2$, $b=1$, $i=5$ and $m=2$;
\item[(v)] $r=5$, $a=5$, $b\in\{1,2,3,4,5\}$, $i=b+1$ and $m=3$;
\item[(vi)] $r=5$, $a=4$, $b\in\{1,2\}$, $i=2b+1$ and $m=2$;
\item[(vii)] $r=5$, $a=3$, $b=1$, $i=4$ and $m=1$;
\item[(viii)] $r=5$, $a=2$, $b=1$, $i=5$ and $m=0$.
\end{enumerate}
\end{proposition}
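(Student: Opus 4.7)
The plan is to mimic the proofs of Propositions \ref{prop:num n=1} and \ref{prop:num n=2}: the statement is purely numerical, so it should fall out of Proposition \ref{prop:num} combined with the bound $r\leq 2n+2$ from Proposition \ref{prop:sec}(i) and the Kobayashi--Ochiai bound $i\leq r+1$ on the index of an $r$-dimensional Fano manifold (see Remark \ref{rem:fano}).

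Concretely, I would first enumerate the possible values of $r$. Since $n+2\leq r\leq 2n+2$ for $n=3$, one has $r\in\{5,6,7,8\}$. For each such $r$, Proposition \ref{prop:num}(iii) reads $a=(m+2)/(r-4)$; combined with $a\geq 2$ and $0\leq m\leq r-2$ this produces only a short list of admissible pairs $(a,m)$: for $r=8$ the equality case of Proposition \ref{prop:sec}(i) forces $a=2$ and hence $m=6$; for $r=7$ the only admissible pair is $(a,m)=(2,4)$; for $r=6$ one gets $(a,m)\in\{(2,2),(3,4)\}$; and for $r=5$ one finds $a\in\{2,3,4,5\}$ with $m=a-2$, giving four sub-cases.

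For each such triple $(r,a,m)$, I would substitute into Proposition \ref{prop:num}(i) to read off $i$ as a linear function of $b$, producing the expressions for $i$ displayed in the eight cases; the identity in Proposition \ref{prop:num}(iv) is then automatically consistent, so no extra constraint arises from it. Finally, imposing $i\leq r+1$ together with $b\geq 1$ truncates $b$ to the intervals listed in (i)--(viii); in the three cases where this truncation leaves only $b=1$ (namely (iv), (vii), (viii)), $i$ is then pinned down to a specific integer rather than remaining a function of $b$.

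There is no real obstacle: the whole argument is a routine tabulation, and accordingly the actual proof should collapse to a one-line citation of Proposition \ref{prop:num}, in perfect analogy with the $n=1$ and $n=2$ numerology propositions. The only point worth checking carefully is that in each case the integrality of $i$ and $b$ is automatic from the formulas $i=(r+1)b-(r-4)(ab-1)$ and $b=(i-2)/(r-m-1)$, which it is.
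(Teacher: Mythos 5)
Your tabulation is correct and is exactly what the paper does: its proof of this proposition is the one-line "It easily follows from Proposition \ref{prop:num}," and your enumeration of $(r,a,m)$ via $a=(m+2)/(r-4)$ with $5\leq r\leq 8$, followed by reading off $i$ from Proposition \ref{prop:num}(i) and truncating $b$ by $i\leq r+1$, reproduces all eight cases. No discrepancy with the paper's argument.
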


\begin{proof}
It easily follows from Proposition \ref{prop:num}.
\end{proof}

\begin{remark}
Cases (ii), (iv), (vii) and (viii) are easy to classify by using geometric arguments (see Theorem \ref{thm:n=3 easy} below). For cases (v) and (vi), the situation is more complicated. They will be treated in Theorem \ref{thm:n=3 r=5}. On the other hand, for cases (i), (iii) the problem seems to be much more difficult. As far as we know, even for special Cremona transformations cases (i) and (iii) remain open (see \cite[Corollary 1]{c-k2}), and
case (v) was obtained in \cite[Theorem 3.2]{e-sb}. We will extend this result in Corollary \ref{cor:r=n+2}.
\end{remark}

\begin{theorem}[Easy cases]\label{thm:n=3 easy}
Let $\Phi:\p^r\da Z$ be a special birational transformation of type
$(a,b)$, and let $n=3$.
\begin{enumerate}
\item[(I)] If $r=7$ then $a=2$ and either
\begin{itemize}
\item $b=2$, $X\subset\p^7$ is a hyperplane section of $\p^2\times\p^2\subset\p^8$ and $Z\subset\p^8$ is a quadric hypersurface, or 
\item $b=1$, $X\subset\p^6$ is a rational normal scroll and $Z\subset\p^{13}$ is a hyperplane section of $\g(1,5)\subset\p^{14}$, or
\item $b=1$, $X\subset\p^6$ is a linear section of $\g(1,4)$ and $Z\subset\p^{12}$ is a linear section of the $10$-dimensional spinor variety $S_4\subset\p^{15}$.
\end{itemize}
\item[(II)] If $r=6$ and $a=2$ then $b=1$, $X\subset\p^5$ is the Segre embedding of $\p^1\times\p^2$ and $Z=\g(1,4)\subset\p^9$.
\item[(III)] If $r=5$ and $a=3$ then $b=1$, $X\subset\p^5$ is a quintic Castelnuovo threefold and $Z\subset\p^7$ is a c.i. $(2,2)$.
\item[(IV)] If $r=5$ and $a=2$ then $b=1$, $X\subset\p^4$ is a quadric hypersurface and $Z\subset\p^6$ is a quadric hypersurface.
\end{enumerate}
\end{theorem}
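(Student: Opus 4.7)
Parts (III) and (IV) are immediate consequences of earlier results in the paper: case (IV) is the sub-case $n=3$ of Theorem~\ref{thm:m=0}, and case (III) is the sub-case $r=5$, $b=1$ of Theorem~\ref{thm:m=1}, which yields precisely a quintic Castelnuovo threefold in $\p^5$ and a c.i.\ $(2,2)$ in $\p^7$. Hence the real work is in (I) and (II). For each of these, the plan is uniform: combine (a) the numerology of Proposition~\ref{prop:num n=3}, which gives the index $i$ and therefore constrains $Z$ via the classification of Fano manifolds of small coindex recalled in Remark~\ref{rem:fano}; (b) Proposition~\ref{prop:sec}, which provides $\deg\Sec_a(X)$ and the structure of the $(r-m-1)$-planes through a general secant point; (c) the fundamental formulae of Proposition~\ref{prop:ff}, which pin down the remaining numerical invariants (crucially $d$ and $z$); and (d) the classification of smooth threefolds of small degree in the appropriate projective space.

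\textbf{Case (II).} With $r=6$, $a=2$, $b=1$, $m=2$, $i=5$, Proposition~\ref{prop:sec}(i) gives $\deg\Sec_2(X)=1$, so Remark~\ref{rem:sec} forces $X\subset\p^5\subset\p^6$. Proposition~\ref{prop:sec}(ii) then shows that $X\subset\p^5$ is swept by a two-dimensional family of quadric surfaces, each contained in its own linear $\p^3$, and a unique such $\p^3$ passes through a general point of $\p^5$. Together with the fact that $X$ is cut by quadrics, this geometric structure forces $X$ to be of minimal degree $3$ in $\p^5$, and by the standard classification of smooth non-degenerate threefolds of minimal degree in $\p^5$ one identifies $X$ with the Segre threefold $\p^1\times\p^2\subset\p^5$. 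On the other side, $Z$ is a sixfold of coindex $2$, hence a Del~Pezzo manifold by Remark~\ref{rem:fano}; the fundamental formulae with $d=3$ force $z=5$, and the unique Del~Pezzo sixfold of degree $5$ is $\g(1,4)\subset\p^9$.

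\textbf{Case (I).} Now $r=7$, $a=2$, $n=3$, $m=4$, with $b\in\{1,2\}$. If $b=2$ then $i=7$, so by Kobayashi-Ochiai $Z\subset\p^8$ is a quadric. Proposition~\ref{prop:sec} gives $\deg\Sec_2(X)=3$ (so $X\subset\p^7$ is non-degenerate) and a $4$-parameter family of $\p^2$'s meeting $X$ in conics; with $z=2$, the fundamental formulae yield $d=6$, and the classification of smooth threefolds of degree $6$ in $\p^7$ cut by quadrics identifies $X$ as a hyperplane section of $\p^2\times\p^2\subset\p^8$. If instead $b=1$ then $i=5$ and $Z$ is a Mukai sevenfold; since $\deg\Sec_2(X)=1$, Remark~\ref{rem:sec} forces $X\subset\p^6$, and Proposition~\ref{prop:sec}(ii) shows $X$ is swept by conics lying in $\p^2$'s. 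The fundamental formulae fix $d=5$, and the classical description of smooth threefolds of degree $5$ in $\p^6$ gives exactly two candidates: a rational normal scroll (for which $z=14$, matching the hyperplane section of $\g(1,5)\subset\p^{14}$ of Example~\ref{ex:g(1,r+1)}) or a linear section of $\g(1,4)\subset\p^9$ by a $\p^6$ (for which $z=12$ and $Z\subset\p^{12}$ is a linear section of the spinor variety $S_4\subset\p^{15}$, matching a construction in Section~\ref{section:ex}).

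\textbf{Main obstacle.} The most delicate point is distinguishing the two threefolds of degree $5$ in $\p^6$ in the $b=1$ sub-case of (I): both have the same numerical invariants in the fundamental formulae, so separating them requires a geometric analysis of their conic fibrations, exploiting the two distinct types of rulings. A secondary nuisance, common to all the non-trivial cases, is that the fundamental formulae for $n=3$ involve three Segre classes $s_1,s_2,s_3$ of the normal bundle, which must be re-expressed via Remark~\ref{rem:s} in terms of $d$, the sectional genus, $\chi(\O_X)$, and one more Chern number before the resulting arithmetic actually pins down~$d$.
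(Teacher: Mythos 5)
Your reduction of (III) and (IV) to Theorems~\ref{thm:m=1} and \ref{thm:m=0}, and your treatment of (II), match the paper. But in case (I) there is a concrete error that also undermines your stated ``main obstacle''. For $b=1$ you assert that the fundamental formulae fix $d=5$ and that the two candidates are ``two threefolds of degree $5$ in $\p^6$'' with identical numerical invariants, to be separated by analysing their conic fibrations. This is wrong on both counts: a smooth $3$-dimensional rational normal scroll in $\p^6$ has degree $4$, while the linear section of $\g(1,4)\subset\p^9$ by a $\p^6$ has degree $5$, and correspondingly $z=14$ versus $z=12$. The two cases are therefore distinguished by their degree, and no delicate geometric separation is needed; conversely, if the formulae really forced $d=5$ you would have (incorrectly) eliminated the scroll, which does occur. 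The paper's actual route is: $\deg\Sec_2(X)=ab-1=1$ forces $X$ into a hyperplane $\p^6$, where $X$ has codimension $3$ and is cut out by quadrics, so $d<2^3=8$; one then reads off the only two admissible threefolds from Ionescu's classification of threefolds of degree at most $7$.

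A related soft spot is your claim, in both sub-cases of (I) and implicitly in (II), that the fundamental formulae ``pin down $d$''. For $n=3$ Proposition~\ref{prop:ff} involves $s_1,s_2,s_3$, i.e.\ the sectional genus, $c_2(S)$ and $c_3(X)$, so knowing $z$ gives a small number of equations in several unknowns and does not determine $d$ by itself (this is exactly why the genuinely hard cases of Section~5 require the multisecant formulae). In the $b=2$ sub-case the paper avoids this entirely: $\Sec_2(X)$ is a cubic hypersurface in $\p^7$, hence $X$ is a secant-defective threefold, and Fujita's classification of such threefolds identifies $X$ as a hyperplane section of $\p^2\times\p^2\subset\p^8$ directly, with $Z$ a quadric coming from Example~\ref{ex:quadrics}(iv). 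Your argument for (II) survives because there the degeneracy plus the quadric bound give $d\le 3$, i.e.\ minimal degree, without any appeal to the fundamental formulae; but as written, the numerical step you lean on in (I) is not available.
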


\begin{proof}
If $r=7$ then $a=2$ and $b\in\{1,2\}$ by Proposition \ref{prop:num n=3}(ii). If $b=2$ then $\deg(\Sec_2(X))=3$ and $X\subset\p^7$ is a secant defective threefold by Proposition \ref{prop:sec}. Hence $X\subset\p^7$ is a hyperplane section of $\p^2\times\p^2\subset\p^8$ by \cite{fuj2}, and $Z\subset\p^8$ is a quadric hypersurface (see Example \ref{ex:quadrics}(iv)). On the other hand, if $b=1$ then $\deg(\Sec_2(X))=1$ by Proposition \ref{prop:sec}(i). In particular, $X\subset\p^6$ and $d<8$, so we easily deduce from \cite{ion} that $X\subset\p^6$ is either a rational normal scroll, or a linear section of $\g(1,4)\subset\p^9$. In the first case $Z\subset\p^{13}$ is a hyperplane section of $\g(1,5)\subset\p^{14}$, and in the second case $Z\subset\p^{12}$ is a linear section of the $10$-dimensional spinor variety $S_4\subset\p^{15}$. This follows by a direct computation taking a hyperplane section in Example \ref{ex:quadrics}(i) and three hyperplane sections in Example \ref{ex:quadrics}(ii), respectively, giving (I). If $r=6$ and $a=2$, then $b=1$ by Proposition \ref{prop:num n=3}(iv) and hence $\deg(\Sec_2(X))=1$ by Proposition \ref{prop:sec}(i). Therefore $X\subset\p^5$ and $d<4$, so it is the Segre embedding of $\p^1\times\p^2$ and $Z=\g(1,4)\subset\p^9$ (see Example \ref{ex:quadrics}(i)), giving (II). Finally, (III) and (IV) follow from Proposition \ref{prop:num n=3}(vii) and (viii), and Theorems \ref{thm:m=1} and \ref{thm:m=0}, respectively.
\end{proof}

In this section, $S\subset\p^{r-1}$ denotes a general hyperplane section of
$X\subset\p^r$ and $C\subset\p^{r-2}$ denotes a general hyperplane section of
$S\subset\p^{r-1}$. The genus of $C$ is denoted by $g$.

\begin{proposition}[Fundamental formulae for $n=3$]\label{prop:ff n=3}
Let $\Phi:\p^r\da Z$ be a special birational transformation of type
$(a,b)$. If $n=3$ then:
\begin{enumerate}
\item[(i)] $s_1=2-2g-(r-1)d$
\item[(ii)] $s_2=c_2(S)-r(2-2g)+\binom{r}{2}d$
\item[(iii)] $s_3=c_3(X)-(r+1)c_2(S)+\binom{r+1}{2}(2-2g)-\binom{r+1}{3}d$
%\item[(i)] $z=a^r-\binom{r}{3}a^3d-\binom{r}{2}a^2s_1-ras_2-s_3$
%\item[(ii)] $bz=a^{r-1}-\binom{r-1}{2}a^2d-(r-1)as_1-s_2$
%\item[(iii)] $b^2z-\deg(Y)=a^{r-2}-(r-2)ad-s_1$
\end{enumerate}
\end{proposition}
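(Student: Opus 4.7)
The plan is to follow the strategy of Remark \ref{rem:s} and the proof of Proposition \ref{prop:ff n=2}. Starting from $s(N_{X/\p^r})=c(T_X)\cdot(1+H_X)^{-(r+1)}$ and expanding $(1+H_X)^{-(r+1)}=\sum_{i\ge 0}(-1)^i\binom{r+i}{i}H_X^i$, I would read off
\begin{align*}
s_1(N_{X/\p^r}) &= c_1 - (r+1)H_X,\\
s_2(N_{X/\p^r}) &= c_2 - (r+1)c_1 H_X + \binom{r+2}{2}H_X^2,\\
s_3(N_{X/\p^r}) &= c_3 - (r+1)c_2 H_X + \binom{r+2}{2}c_1 H_X^2 - \binom{r+3}{3}H_X^3,
\end{align*}
where $c_i:=c_i(T_X)$, and then compute $s_k = s_k(N_{X/\p^r})\cdot H_X^{3-k}$.

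The key intermediate step is to convert the mixed numbers $c_1\cdot H_X^2$ and $c_2\cdot H_X$ into invariants of the flag $C\subset S\subset X$. Adjunction applied twice yields $K_C=(K_X+2H_X)|_C$, hence $c_1\cdot H_X^2=-K_X\cdot H_X^2=2-2g+2d$; plugging this into $s_1(N_{X/\p^r})\cdot H_X^2$ gives (i). For $c_2\cdot H_X$ I would restrict the normal bundle sequence $0\to T_S\to T_X|_S\to \O_S(H_X)\to 0$ to read off $c_2(T_X)|_S = c_2(T_S)+c_1(T_X)|_S\cdot H_S - H_S^2$; taking degree on $S$, and using $H_S^2=d$ together with the previous identity, one obtains $c_2\cdot H_X = c_2(S)+2-2g+d$. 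Substituting into $s_2(N_{X/\p^r})\cdot H_X$ and simplifying then yields (ii).

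For (iii) I would simply combine the two identities above with the expansion of $s_3(N_{X/\p^r})$, collecting the coefficients of $c_3(X)$, $c_2(S)$, $(2-2g)$, and $d$. The coefficient of $(2-2g)$ collapses via $-(r+1)+\binom{r+2}{2}=\binom{r+1}{2}$, and the coefficient of $d$ via $-(r+1)+2\binom{r+2}{2}-\binom{r+3}{3}=-\binom{r+1}{3}$, giving the stated formula.

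The only step with any real content is the identity for $c_2\cdot H_X$, which forces one to descend from $X$ to $S$ through the normal bundle sequence of $S\subset X$; everything else is binomial bookkeeping that parallels the computation carried out for $n=2$ in Proposition \ref{prop:ff n=2}.
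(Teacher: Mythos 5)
Your proposal is correct and follows essentially the same route as the paper: expand $s(N_{X/\p^r})=c(T_X)\cdot(1+H_X)^{-(r+1)}$ as in Remark \ref{rem:s}, convert $c_1\cdot H_X^2$ via adjunction down to $C$ and $c_2\cdot H_X$ via the normal bundle sequence $0\to T_S\to T_{X|S}\to\O_S(1)\to 0$ (the paper phrases the latter as $c_2(S)=c_2\cdot H_X+K_S\cdot H_S$, which is the same identity), and finish with the binomial bookkeeping. The identities you state, including $-(r+1)+\binom{r+2}{2}=\binom{r+1}{2}$ and $-(r+1)+2\binom{r+2}{2}-\binom{r+3}{3}=-\binom{r+1}{3}$, all check out.
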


\begin{proof}
We deduce from Remark \ref{rem:s} that
$s(N_{X/\p^r})=(1+c_1+c_2+c_3)(1-(r+1)H_X+\binom{r+2}{2}H_X^2-\binom{r+3}{3}H_X^3)$,
so $s_1=c_1\cdot H_X^2-(r+1)H_X^3$, $s_2=c_2\cdot H_X-(r+1)c_1\cdot H_X^2+\binom{r+2}{2}H_X^3$
and $s_3=c_3-(r+1)c_2\cdot H_X+\binom{r+2}{2}c_1\cdot H_X^2-\binom{r+3}{3}H_X^3$. We
recall that $K_S=K_X\cdot H_X+H_X^2$ and $\deg(K_C)=K_X\cdot H_X^2+2H_X^3=2g-2$ by the adjunction
formula, and that $c_2(S)=c_2\cdot H_X+K_S\cdot H_S$ by the exact sequence
$0\to T_S\to {T_X}_{|S}\to \O_S(1)\to 0$, so $c_2\cdot H_X=c_2(S)+2-2g+d$. Then (i), (ii) and (iii) follow
from a simple computation.
\end{proof}

Furthermore, if $r=5$ we have two more well-known formulae:

\begin{proposition}\label{prop:f in P5}
If $r=5$ then we get:
\begin{enumerate}
\item[(i)] $K^3_X=-5d^2+d(2g+25)+24(g-1)-36\chi(\O_S)-24\chi(\O_X)$
\item[(ii)] $2K_S^2=d^2-5d-10(g-1)+12\chi(\O_S)$
\end{enumerate}
\end{proposition}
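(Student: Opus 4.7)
The plan is to derive both identities from the self-intersection formula applied to the smooth embeddings $S\subset\p^4$ and $X\subset\p^5$, combined with Noether's formula on $S$ and Hirzebruch--Riemann--Roch on $X$.

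Formula (ii) is the classical Severi double point formula. Since $S\subset\p^4$ is smooth, the self-intersection formula yields $\deg c_2(N_{S/\p^4})=d^2$. Expanding $c_2(N_{S/\p^4})$ via the normal bundle sequence $0\to T_S\to {T_{\p^4}}_{|S}\to N_{S/\p^4}\to 0$ (as in Remark \ref{rem:s}) gives
$c_2(N_{S/\p^4})=10H_S^2+5H_S\cdot K_S+K_S^2-c_2(S)$;
substituting $H_S\cdot K_S=2g-2-d$ (adjunction) and $c_2(S)=12\chi(\O_S)-K_S^2$ (Noether) and rearranging produces (ii).

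Formula (i) follows the same strategy for $X\subset\p^5$, now in codimension~$2$. The self-intersection formula gives the identity $c_2(N_{X/\p^5})=dH^2$ in $A^2(X)$, while the normal bundle sequence expands $c_2(N_{X/\p^5})=15H^2+6HK_X+K_X^2-c_2(T_X)$. My plan is to intersect this identity with $K_X$ to get a scalar relation involving $K_X^3$, $HK_X^2$, $K_X\cdot c_2(T_X)$ and $H^2\cdot K_X$, and then clear the auxiliary quantities one by one: Hirzebruch--Riemann--Roch on the threefold yields $K_X\cdot c_2(T_X)=-24\chi(\O_X)$; adjunction along the curve section gives $H^2\cdot K_X=2g-2-2d$; the same self-intersection identity intersected with $H$ gives $HK_X^2-H\cdot c_2(T_X)=d^2-3d-12g+12$; and Noether on $S$, combined with the adjunction identities $K_S^2=HK_X^2+4g-4-3d$ and $c_2(S)=H\cdot c_2(T_X)+2g-2-d$, produces $HK_X^2+H\cdot c_2(T_X)=12\chi(\O_S)+4d-6g+6$. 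Adding the last two relations isolates $HK_X^2$, and back-substitution into the $K_X$-intersected identity gives exactly the stated expression for $K_X^3$.

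There is no conceptual obstacle; the only difficulty is careful bookkeeping. The key observation is that the codimension-$2$ self-intersection identity $c_2(N_{X/\p^5})=dH^2$ is a relation in $A^2(X)$, not a scalar, and one must extract from it two independent numerical relations by intersecting with both $H$ and $K_X$ in order to gather enough information to isolate $K_X^3$.
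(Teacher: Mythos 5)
Your derivation is correct: the paper offers no proof of this proposition, simply citing (i) and (ii) as ``well-known formulae,'' and your argument via the self-intersection formula for the codimension-$2$ embeddings $S\subset\p^4$ and $X\subset\p^5$ (combined with Noether's formula and $K_X\cdot c_2(T_X)=-24\chi(\O_X)$) is precisely the standard double-point-formula derivation being invoked. I checked the intermediate relations and the final back-substitution, and they reproduce both stated identities exactly.
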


We now consider case (vi) in Proposition \ref{prop:num n=3}:

\begin{proposition}[Maximal list for $n=3$ and $(a,b)=(4,b)$]\label{prop:(4,b)}
Let $\Phi:\p^5\da Z$ be a special birational transformation of type
$(4,b)$ and $n=3$. Then $b=1$ and one of the following holds:
\begin{enumerate}
\item[(i)] $d=12$, $g=19$, $\chi(\O_S)=16$, $\chi(\O_X)=-5$ and $z=4$;
\item[(ii)] $d=10$, $g=12$, $\chi(\O_S)=7$, $\chi(\O_X)=0$ and $z=6$;
\item[(iii)] $d=9$, $g=9$, $\chi(\O_S)=4$, $\chi(\O_X)=1$ and $z=8$.
\end{enumerate}
\end{proposition}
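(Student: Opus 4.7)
The plan is to follow the blueprint of Proposition~\ref{prop:n=2} in the case $r=4$, $a=4$, with a general hyperplane section $S\subset\p^4$ of $X$ playing the role of the base locus there. First I note that $m=2$ together with $\dim Z=5$ forces $e=0$ by Remark~\ref{rem:e}, and that by Remark~\ref{rem:fano} the case $b=2$ forces $i=5$ with $z=2$, while $b=1$ forces $i=3$ with $z\in\{4,6,8,10,12,14,16,18,22\}$. Then Proposition~\ref{prop:ff} combined with Proposition~\ref{prop:ff n=3} (for $r=5$, $n=3$, $a=4$, $e=0$) yields the three fundamental formulae
\begin{align*}
b^2 z &= 62-8d+2g,\\
bz &= 234-42d+22g-c_2(S),\\
z &= 874-180d+150g-14c_2(S)-c_3(X),
\end{align*}
and Proposition~\ref{prop:f in P5}(ii) combined with Noether's formula $12\chi(\O_S)=K_S^2+c_2(S)$ expresses $\chi(\O_S)$ as a polynomial in $d,g,c_2(S)$. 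In particular, given $b$ and $z$, the quantities $g$, $c_2(S)$ and $\chi(\O_S)$ are determined as explicit functions of $d$.

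The key extra input is that $S\subset\p^4$ is itself defined by quartics (the restrictions of the forms defining $X$), hence has no $5$-secant lines, so Le~Barz's formula $N_{5,4}(S)=0$ applies. I must also check that $S$ is not a scroll: by Remark~\ref{rem:scroll P4} the non-degenerate scrolls in $\p^4$ have $(d,g)\in\{(3,0),(5,1)\}$, and one verifies that neither pair satisfies $2g=b^2z+8d-62$ for any admissible $(b,z)$. After substitution, $N_{5,4}(S)=0$ becomes a polynomial equation in $d$ alone. For $b=2$, $z=2$ one checks that no integer $d<a^{r-n}=16$ with $g\geq 0$ and $\chi(\O_S)\in\z$ solves this equation, which rules out $b=2$. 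For $b=1$ and each of the nine admissible values of $z$, scanning integer $d$ with $d\geq (62-z)/8$ and $d<16$ and keeping those with $\chi(\O_S)\in\z$ and $N_{5,4}(S)=0$ leaves exactly the triples $(z,d)\in\{(4,12),(6,10),(8,9)\}$, which give (i), (ii) and (iii).

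To finish, I recover $\chi(\O_X)$ by intersecting the total Chern class identity $c(T_X)\cdot c(N_{X/\p^5})=(1+H_X)^6$ with appropriate powers of $H_X$, using the adjunction identity $c_2(X)\cdot H_X=c_2(S)+2-2g+d$ recalled in the proof of Proposition~\ref{prop:ff n=3} together with $K_X\cdot c_2(X)=-24\chi(\O_X)$. Substituting $K_X^3$ from Proposition~\ref{prop:f in P5}(i) and $K_S^2$ from Noether's formula, and equating with the value of $c_3(X)$ produced by the third fundamental formula, yields a linear equation for $\chi(\O_X)$ whose solutions in the three surviving cases are $-5,0,1$. The main difficulty is the case-by-case bookkeeping across the nine Mukai degrees and several candidate values of $d$; all arithmetic is mechanical, but missing a single integrality check would admit spurious candidates such as $(z,d)=(4,9)$ or $(4,13)$, which satisfy every Noether-type identity and are eliminated precisely by the Le~Barz vanishing $N_{5,4}(S)=0$.
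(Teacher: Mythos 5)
Your overall strategy is the same as the paper's: set $e=0$ from Remark \ref{rem:e}, use Propositions \ref{prop:ff}, \ref{prop:ff n=3} and \ref{prop:f in P5} together with Noether's formula to express $g$, $K_S^2$, $\chi(\O_S)$, $K_X^3$, $\chi(\O_X)$ in terms of $b,z,d$, check that $S$ is not a scroll, impose $N_{5,4}(S)=0$, and run through the Mukai degrees. The structure is sound, but there is one genuine gap in the enumeration.

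You claim that keeping the candidates with $\chi(\O_S)\in\z$ and $N_{5,4}(S)=0$ leaves \emph{exactly} $(z,d)\in\{(4,12),(6,10),(8,9)\}$. This is false: for $b=1$, $z=14$ the Le Barz equation is solved by $d=7$, which gives $g=(14+56-62)/2=4$, $K_S^2=70+49+7-128=-2$ and $\chi(\O_S)=(210+49+329-576)/12=1\in\z$. So $(z,d)=(14,7)$ passes both of your stated filters --- and it is not a numerically absurd candidate, since $(d,g,\chi,K^2)=(7,4,1,-2)$ are precisely the invariants of an existing surface in $\p^4$ (cf. Proposition \ref{prop:n=2}(xvi)) and a threefold of degree $7$ and sectional genus $4$ in $\p^5$ does exist (cf. Lemma \ref{lem:6-secant}). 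The case is eliminated only by the integrality of the \emph{threefold} invariant: $\chi(\O_X)=-(14\cdot 19-276)/24=-10/24\notin\z$. Since you only compute $\chi(\O_X)$ \emph{a posteriori} for the three cases you believe survive, your argument as written would let case $(14,7)$ through. (The analogous candidate $(z,d)=(10,8)$ happens to be caught by your $\chi(\O_S)\in\z$ test, which may be why you did not notice the problem.) The fix is exactly what the paper does: include $\chi(\O_X)\in\z$ as a filter in the scan, not merely as a final computation. Two further minor points: your list of scroll invariants omits the degenerate scroll $\p^1\times\p^1\subset\p^3$ with $(d,g)=(2,0)$ (harmless, since $b^2z=46$ is not admissible), and you include $z=22$ among the Mukai degrees, which is excluded for $\dim Z=5$ since genus-$12$ Mukai manifolds exist only in dimension $3$ (also harmless provided the equation has no integer root there, but it should be justified or checked).
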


\begin{proof}
In this case, Propositions \ref{prop:ff} and \ref{prop:ff n=3} yield the fundamental formulae
$$z=874-180d+150g-14c_2(S)-c_3(X)$$
$$bz=234-42d+22g-c_2(S)$$
Now since $c_2(S)=12\chi(\O_S)-K_S^2$ (Noether's formula) and
$$c_3(X)=6K_S^2+24\chi(\O_X)-72\chi(\O_S)-2d(d-6-g)+12(g-1)$$
(this follows from the exact sequence $0\to TX\to T\p^r_{|X}\to N_{X/\p^r}\to 0$)
we deduce
$$z=-24\chi(\O_X)-96\chi(\O_S)+2d^2-2dg-192d+138g+8K_S^2+886$$
$$bz=-12\chi(\O_S)-42d+22g+K_S^2+234$$
Furthermore, since $m=2$ we get $e=0$ (see Remark \ref{rem:e}) and hence
$$b^2z=2g+62-8d$$
Moreover, we can eliminate variables thanks to Proposition \ref{prop:f in
P5} and we get
$$g=(b^2z+2(4d-31))/2$$
$$K_S^2=bz(6b-1)+d^2+d-128$$
$$\chi(\O_S)=(bz(17b-2)+d^2+47d-576)/12$$
$$K^3_X=z(2b^2(d-10)-2b+1)+2(3d^2-68d+384)$$
$$\chi(\O_X)=-(z(b^2(d+19)-8b+1)+6(d^2-9d-32))/24$$

Note that $S\subset\p^4$ has no $5$-secant lines as $X\subset\p^5$ is defined by quartic hypersurfaces. If $S\subset\p^4$ is a scroll then $X\subset\p^5$ is either the Segre embedding of $\p^1\times\p^2$, or a quadric hypersurface by Remark \ref{rem:scroll P4}. Therefore, $S\subset\p^4$ is a not scroll and we deduce $N_{5,4}=0$, that is,

\begin{center}
$2b^3z^2(b(3d-32)+2)-2bz(b(3d^3-103d^2+1185d-4568)+2(d^2-20d+106))+d^5-62d^4+1529d^3-18764d^2+114688d-279552=0$.
\end{center}

We deduce from Proposition \ref{prop:ff} that $d<16$. If $b=2$ then $i=5$ and hence $z=2$, so $d\notin\n$. On the other hand, if $b=1$ then $i=3$ and hence $z\in\{4,6,8,10,12,14,16,18\}$ by the classification of Mukai manifolds \cite{muk}. If $z=4$ then $d=12$ and we get (i). If $z=6$ then $d=10$ and we get (ii). If $z=8$ then $d=9$ and we get (iii). If $z=10$ (resp. $z=14$) then $d=8$ (resp. $d=7$) but $\chi(\O_X)\notin\z$. On the other hand, if $z\in\{12,16,18\}$ then $d\notin\n$.
\end{proof}

We will need the following result to obtain the main result of this section, namely, Theorem \ref{thm:n=3 r=5}:

\begin{lemma}\label{lem:6-secant}
Let $X\subset\p^5$ be a non-degenerate threefold. If $S\subset\p^4$ contains a line $L$ with $L^2\in\{0,-1\}$ then $X\subset\p^5$ is either the Segre embedding $\p^1\times\p^2$, or a c.i. $(2,2)$, or a quintic Castelnuovo threefold, or a scroll of lines over a surface (see \cite{ott} for the classification), or an inner projection of a c.i. $(2,2,2)$ in $\p^6$. In particular, $\Sec_5(X)=\emptyset$.
\end{lemma}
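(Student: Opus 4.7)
The plan is to translate the hypothesis $L^2\in\{0,-1\}$ into information about the normal bundle $N_{L/X}$, deduce that $X$ contains a rich family of lines, and then invoke existing classifications of smooth threefolds in $\p^5$ according to the structure of that family.

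First, from adjunction $K_S=(K_X+H)|_S$ and the genus formula $2g(L)-2=L^2+K_S\cdot L=-2$, one gets $K_S\cdot L=-2-L^2$ and $K_X\cdot L=-3-L^2$, so $-K_X\cdot L\in\{2,3\}$. Then from the exact sequence
\begin{equation*}
0\to N_{L/S}\to N_{L/X}\to N_{S/X}|_L\to 0
\end{equation*}
with $N_{S/X}|_L=\O_L(1)$ and $N_{L/S}=\O_L(L^2)$, the splitting type of $N_{L/X}$ must be one of $\O_L\oplus\O_L(1)$ (if $L^2=0$), $\O_L(-1)\oplus\O_L(1)$, or $\O_L\oplus\O_L$ (if $L^2=-1$; the last two correspond respectively to the trivial and non-trivial elements of $\mathrm{Ext}^1(\O_L(1),\O_L(-1))\cong\c$). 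In all cases $h^1(N_{L/X})=0$, so the Hilbert scheme of lines on $X$ is smooth at $[L]$ of dimension $h^0(N_{L/X})\in\{2,3\}$, and $h^0(N_{L/X}(-p))\ge 1$ at a general $p\in L$ except in the last case.

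For the first two splitting types a $1$-parameter family of lines passes through a general point of $L$, so $X$ is covered by a $2$-dimensional family of lines, and I would invoke Ottaviani's classification \cite{ott} of smooth scrolls of lines in $\p^5$ together with Ionescu's classification of smooth threefolds in $\p^5$ of low degree (noting $d<16$ by Proposition \ref{prop:ff}): the only non-scroll possibilities compatible with a covering family of lines are $\p^1\times\p^2$, the c.i. $(2,2)$, and the quintic Castelnuovo threefold. The third splitting type $N_{L/X}=\O_L\oplus\O_L$ has to be treated separately: the line $L$ does not deform through a general point of $L$, so the covering argument fails, but the $2$-dimensional unobstructed family of lines still sweeps out a subvariety of $X$, and a dimension/degree analysis (combined with the fact that the lift of $X$ along $L$ can be realized in $\p^6$) identifies $X$ with the inner projection from a general point $p\in L$ of a complete intersection of three quadrics $Y\subset\p^6$ containing $p$.

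The main obstacle will be this last identification, since it requires constructing the lift $Y\subset\p^6$ geometrically rather than appealing to a pre-existing classification table; the normal bundle analysis gives the existence of the lift, and then the degree and the fact that $Y$ is cut out by quadrics force $Y$ to be a c.i. $(2,2,2)$. Finally, once the five types are identified, the claim $\Sec_5(X)=\emptyset$ follows by a type-by-type check: for $\p^1\times\p^2$ and the c.i. $(2,2)$ the degree bound $d\le 4$ makes a $5$-secant line impossible by B\'ezout; for the quintic Castelnuovo threefold an explicit description of its lines suffices; for a scroll of lines any $5$-secant meeting $X$ in $5$ distinct points would have to coincide with a ruling line and hence lie in $X$; and for an inner projection of a c.i. $(2,2,2)\subset\p^6$, lifting a putative $5$-secant back to $\p^6$ would contradict B\'ezout on the c.i.
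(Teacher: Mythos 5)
There is a genuine gap: your case division by the splitting type of $N_{L/X}$ assigns the geometric outcomes to the wrong branches, and both of the key inferences fail. First, $N_{L/X}=\O_L(-1)\oplus\O_L(1)$ does \emph{not} imply that $X$ is covered by a $2$-dimensional family of lines: here $h^0(N_{L/X}(-p))=1$ but $h^1(N_{L/X}(-p))=1$, so the lines through a general $p\in L$ need not actually move, and even when they do the resulting pencil can be confined to a plane $P\subset X$ containing $L$. This is exactly what happens for the inner projection of a c.i. $(2,2,2)$: the exceptional plane $P$ has $N_{P/X}\cong\O_{\p^2}(-1)$, the line $L=P\cap S$ has $L^2=-1$ and $N_{L/X}\cong\O_L(1)\oplus\O_L(-1)$ (the sequence $0\to\O_L(1)\to N_{L/X}\to\O_L(-1)\to 0$ splits), and the $2$-dimensional family sweeps out only $P$ — so your second branch would miss this case entirely. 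Symmetrically, $N_{L/X}=\O_L\oplus\O_L$ is precisely the generic normal bundle of a \emph{covering} $2$-dimensional family (finitely many lines through a general point): the ruling lines of a scroll over a surface, and the lines cut on a c.i. $(2,2)$ or a quintic Castelnuovo threefold by a general hyperplane, all have trivial normal bundle. So your third branch, where you try to construct a lift to a c.i. $(2,2,2)$ in $\p^6$, is populated by the wrong varieties and the proposed lifting construction has nothing to act on. In effect the two cases are swapped.

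A further problem is that the covering case cannot be closed by ``Ionescu's classification of low degree threefolds'': the lemma carries no degree hypothesis (it is applied in the type $(5,1)$ analysis with $d$ as large as $20$; the bound $d<16$ you quote comes from the type $(4,b)$ situation and is not part of this statement), and the classification of smooth threefolds in $\p^5$ is only complete up to degree $11$. The paper instead splits on whether the family $\mathcal F$ sweeps out $X$: if it does, the general line has $N_{L/X}=\O_L\oplus\O_L$, hence $(K_X+2H_X)\cdot L=0$, so the adjunction map $\phi_{|K_X+2H_X|}$ has image of dimension at most $2$ and one invokes the degree-free classification of such threefolds; if it does not, $\mathcal F$ sweeps out a plane $P$ with $N_{P/X}\cong\O_{\p^2}(-s)$, the hypothesis $L^2\in\{0,-1\}$ forces $s\in\{0,1\}$, hence $d=7$ and the inner projection. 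You would need to reorganize your argument along these lines before the final $\Sec_5(X)=\emptyset$ check (which is essentially fine) can be carried out.
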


\begin{proof}
If $S\subset\p^4$ contains a line then $X\subset\p^5$ contains an irreducible family of lines $\mathcal F$ of dimension at least $2$. We can assume that $K_X+2H_X$ is generated by sections (see for instance \cite[Proposition 3.1]{b-o-s-s2}) and that $\dim(\mathcal F)=2$, as otherwise $X\subset\p^5$ is the Segre embedding of $\p^1\times\p^2$. If $\mathcal F$ sweeps out $X\subset\p^5$ then $N_{L/X}=\O_L\oplus\O_L$ for a general line $L\in\mathcal F$. Therefore $(K_X+2H_X)\cdot L=0$, so $\dim(\phi_{|K_X+2H_X|}(X))\leq 2$, and hence we deduce from the proof of \cite[Proposition 3.2]{b-o-s-s2} that $X\subset\p^5$ is a c.i. $(2,2)$, or a quintic Castelnuovo threefold, or a scroll of lines over a surface. On the other hand, if $\mathcal F$ does not sweep out $X\subset\p^5$ and $S\subset\p^4$ contains a line $L$ with $L^2\in\{0,-1\}$ then $X\subset\p^5$ contains a plane $P$ such that $N_{P/X}\cong\O_{\p^2}(-s)$ with $s\in\{0,1\}$. If $s=0$ then $X\subset\p^5$ would be the Segre embedding $\p^1\times\p^2$, which is a contradiction, and if $s=1$ then we get $d=7$ (see for instance \cite[Proposition 3.3]{b-o-s-s2})). In this case, $X\subset\p^5$ is necessarily an inner projection of a c.i. $(2,2,2)$ in $\p^6$ by \cite{ion}, since the other two types of threefolds in $\p^5$ of degree $7$ are easily ruled out.  As $X\subset\p^5$ is defined by forms of degree at most four in all of these cases, it does not have any $5$-secant line and hence $\Sec_5(X)=\emptyset$.
\end{proof}

\begin{proposition}[Maximal list for $n=3$ and $(a,b)=(5,b)$]\label{prop:(5,b)}
Let $\Phi:\p^5\da Z$ be a special birational transformation of type
$(5,b)$ and $n=3$. If $b>1$ then one of the following holds:
\begin{enumerate}
\item[(i)] $b=5$, $d=15$, $g=26$, $\chi(\O_S)=20$, $\chi(\O_X)=-4$ and $z=1$;
\item[(ii)] $b=3$, $d=14$, $g=22$, $\chi(\O_S)=14$, $\chi(\O_X)=0$ and $z=3$;
\item[(iii)] $b=2$, $d=16$, $g=28$, $\chi(\O_S)=16$, $\chi(\O_X)=10$ and $z=6$;
\item[(iv)] $b=2$, $d=12$, $g=16$, $\chi(\O_S)=9$, $\chi(\O_X)=0$ and $z=14$.
\end{enumerate}
\end{proposition}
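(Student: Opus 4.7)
The plan is to follow the blueprint of Proposition \ref{prop:(4,b)} as closely as possible. I would begin by writing out the fundamental formulae of Propositions \ref{prop:ff} and \ref{prop:ff n=3} specialized to $a=r=5$, $n=3$, then eliminate $c_2(S)$ and $c_3(X)$ via Noether's formula $c_2(S)=12\chi(\O_S)-K_S^2$, Proposition \ref{prop:f in P5}, and the expression for $c_3(X)$ coming from the normal bundle exact sequence (as used in the proof of Proposition \ref{prop:(4,b)}). Two features distinguish this case from the $(4,b)$ one. First, since $m=3$ gives $\cd(Y,Z)=2$, Remark \ref{rem:e} yields $e=\deg(Y)\neq 0$, and the third fundamental formula specializes to $b^2z-e=123-11d+2g$; it merely determines $e$ a posteriori, so effectively one of the three fundamental relations is lost. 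Second, because $X\subset\p^5$ is now defined by quintics, the correct vanishing multisecant identity is $N_{6,4}=0$ rather than $N_{5,4}=0$.

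After the elimination I expect the first two fundamental formulae to read
\begin{align*}
bz &= 595-80d+30g-c_2(S),\\
z &= (\text{linear expression in } d,g,c_2(S),c_3(X)),
\end{align*}
from which $\chi(\O_S)$ and then $\chi(\O_X)$ solve as rational functions of $(b,z,d,g)$ once Proposition \ref{prop:f in P5}(ii) is applied. The delicate point in invoking $N_{6,4}=0$ is the Le Barz correction $-\sum_j\binom{7+l_j}{6}$, where $l_j=L_j^2$ runs over the self-intersections of the lines on $S$. The plan for controlling it is as follows: Proposition \ref{prop:sec}(i) gives $\deg(\Sec_5(X))=5b-1\geq 9$, so $\Sec_5(X)$ is a nonempty hypersurface; Lemma \ref{lem:6-secant} then forces $L^2\notin\{0,-1\}$ for every line $L\subset S$, while the Hodge index inequality $d\cdot L^2\leq(H_S\cdot L)^2=1$ (with $H_S^2=d\geq 2$) rules out $L^2\geq 1$, leaving $L^2\leq-2$. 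For such values $7+l_j\leq 5$ and the correction term vanishes, so $N_{6,4}=0$ provides the missing final equation, polynomial of degree six in $d$ with coefficients depending on $g$ and $\chi(\O_S)$.

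The final step will be the case analysis. For each $b\in\{2,3,4,5\}$, the index $i=b+1$ together with Remark \ref{rem:fano} restricts $z$ to a finite list: $z=1$ for $b=5$, $z=2$ for $b=4$, $z\in\{3,4,5\}$ for $b=3$, and $z\in\{4,6,8,10,12,14,16,18,22\}$ for $b=2$. For each admissible pair $(b,z)$, substituting the expressions for $\chi(\O_S),\chi(\O_X)$ into $N_{6,4}=0$ reduces the system to a single polynomial equation in the two unknowns $(d,g)$, and I would enumerate its integer solutions within the range $d<a^{r-n}=25$ imposed by Proposition \ref{prop:ff}, then filter by integrality of $\chi(\O_S),\chi(\O_X),g$ and positivity of $e=b^2z-123+11d-2g$. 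I expect the main obstacle to be precisely this bookkeeping: especially for $b=2$, where nine values of $z$ must be tested, the degree-six $N_{6,4}=0$ produces many candidate solutions that have to be systematically discarded, leaving only the four listed cases (i)--(iv).
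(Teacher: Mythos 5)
Your plan reproduces the paper's proof essentially step for step: the same two fundamental formulae from Propositions \ref{prop:ff} and \ref{prop:ff n=3}, the same elimination of $K_S^2$, $\chi(\O_S)$, $K_X^3$, $\chi(\O_X)$ via Noether's formula and Proposition \ref{prop:f in P5}, the same use of $N_{6,4}=0$ (justified, exactly as in the paper, by combining $\deg(\Sec_5(X))=5b-1>0$ with Lemma \ref{lem:6-secant} to exclude lines of self-intersection $0$ or $-1$, so that Hodge index forces $l_j\le -2$ and the Le Barz correction vanishes), and the same case split over $(b,z)$ via the coindex classification. The only substantive difference is in the filtering of candidates: the paper works with the unknowns $(d,e)$ and crucially imposes the \emph{upper} bound $e<zb^2$, coming from the fact that $Y\subset Z$ is cut out by more than two members of $|bH_Z|$; in your variables this is the lower bound $g>(11d-123)/2$, which is strictly stronger than the positivity $e>0$ you propose (that only gives an upper bound on $g$). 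The enumeration still terminates without it, since for fixed $d<25$ the equation $N_{6,4}=0$ is a cubic in $g$, but you risk being left with spurious integer solutions that integrality of $\chi(\O_S)$ and $\chi(\O_X)$ alone may not exclude; you should add the constraint $e<zb^2$ to your filter to guarantee the list closes to exactly the four stated cases. (Two trivia: the paper's displayed third relation ``$bz^2-e$'' is a typo for $b^2z-e$, which you have correctly; and for $b=2$ the value $z=22$ you list can be discarded outright, since Mukai manifolds of dimension $5$ have degree at most $18$.)
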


\begin{proof}
In view of Propositions \ref{prop:ff} and \ref{prop:ff n=3}, we get the fundamental formulae
$$z=-24\chi(\O_X)-156\chi(\O_S)+2d^2-2d(g+246)+268g+13K_S^2+2857$$
$$bz=-12\chi(\O_S)-80d+30g+K_S^2+595$$
$$bz^2-e=123-11d+2g$$
Thanks to Proposition \ref{prop:f in P5} we can eliminate variables and we get
$$g=(b^2z+11d-e-123)/2$$
$$K_S^2=bz(10b-1)+d^2+25d-5(2e+125)$$
$$\chi(\O_S)=(bz(25b-2)+d^2+110d-25(e+75))/12$$
$$K_X^3=z(b^2(2d-2)-7b+1)+2(6d^2-d(e+148)+e+750)$$
$$\chi(\O_X)=-(z(b^2(d+61)-13b+1)+9d^2-de-61e-2625)/24$$
Moreover, we get that $S\subset\p^4$ has no $6$-secant lines since $X\subset\p^5$ is defined by quintic hypersurfaces, and we deduce from Lemma \ref{lem:6-secant} that it contains no line of self-intersection $\{0,-1\}$ as $\deg(\Sec_5(X))=5b-1$ by Proposition \ref{prop:sec}(i). Therefore, $N_{6,4}=0$ and we get the following equation on $d,e,b,z$

\begin{center}
$-3b^6z^3+b^2z^2(b^2(18d^2-615d+9e+5273)+4b(6d-95)+8)-bz(b(9d^4-658d^3+4d^2(9e+4520)-10d(123e+22114)+9e^2+10546e+1015245)+4(2d^3-103d^2+2d(3e+904)-5(19e+2145)))+d^6 -116d^5+d^4(9e+5588)-d^3(658e+143159)+2d^2(9e^2+9040e+1029038)-5d(123e^2+44228e+3149625)+3e^3+5273e^2+1015245e+50124375=0$
\end{center}

We get $d<25$ by Proposition \ref{prop:ff} and, as $Y\subset Z$ is defined by more than two forms of degree $b$, we deduce $e<zb^2$.
If $b=5$ then $i=6$ and hence $z=1$. Since $e<25$ we get
that either $d=e=15$ and we get case (i), or else $d=e=20$, but in
that case $\chi(\O_X)\notin\z$. If $b=4$ then $i=5$ and hence $z=2$.
Moreover $e<32$. In this case the only admissible
pairs are $(d,e)\in\{(13,16), (16,25), (17,24)\}$, but in any case
$\chi(\O_X)\notin\z$. Assume now $b=3$. Then $i=4$ and hence $Z$ is
a Del Pezzo manifold, so $z\in\{3,4,5\}$ by \cite{fuj}. If $z=3$ then $e<27$
and hence $(d,e)\in\{(14,14), (16,18), (17,17)\}$. In the first case
we get (ii), and in the other cases we get $\chi(\O_X)\notin\z$. If
$z=4$ then $e<36$ and hence $(d,e)\in\{(15,24), (16,33)\}$, but
$\chi(\O_X)\notin\z$. If $z=5$ then $e<45$, whence
$(d,e)\in\{(13,27), (22,36)\}$ and $\chi(\O_X)\notin\z$. Finally,
let $b=2$. Then $i=3$, so $Z$ is a Fano manifold of coindex $3$ and
hence $z\in\{4,6,8,10,12,14,16,18\}$ by Mukai's classification \cite{muk}. If $z=4$ then $e<16$ so we get $(d,e)\in\{(13,0), (16,9),
(17,8)\}$ and hence $\chi(\O_X)\notin\z$. If $z=6$ then $e<24$
so we get $(d,e)\in\{(15,12), (16,21)\}$. In the first case,
$\chi(\O_X)\notin\z$. In the second case, we get (iii). If $z=8$ then
$e<32$ and hence $(d,e)=(14,17)$, but $\chi(\O_X)\notin\z$. If
$z=10$ (resp. $z=12$) then $e<40$ (resp. $48$) and there are no
integer solutions $(d,e)$. If $z=14$ then $e<56$ an hence
$(d,e)\in\{(8,17),(12,33),(13,36),(16,39)\}$. If $(d,e)=(12,33)$
then we get (iv). In the other cases, $\chi(\O_X)\notin\z$. If $z=16$ then
$e<64$ and there are no integer solutions $(d,e)$. If $z=18$
then $e<72$ and hence $(d,e)=(18,69)$ but $\chi(\O_X)\notin\z$.
\end{proof}

\begin{remark}\label{rem:(5,b)}
The proof of Proposition \ref{prop:(5,b)} is based on the
classification of Fano manifolds of small coindex. If $b=1$ then $i=2$
and hence $Z$ has coindex $4$. As there is no
classification of such manifolds, we argue in a different way (cf. Remark \ref{rem:fano}).
\end{remark}

We will use the following results from \emph{liaison} theory in the sequel:

\begin{lemma}[{\cite[Propositions 3.1 and 2.5]{p-s}}]\label{lem:liaison}
Let $X, X'\subset\p^r$ be two subvarieties of codimension $2$ which are linked by a c.i. $(p,q)$. Let $d$ and $d'$ (resp. $g$ and $g'$) denote their degree (resp. sectional arithmetic genus). Then:
\begin{enumerate}
\item[(i)] $d+d'=pq$ and $g-g'=(p+q-4)(d-d')/2$
\item[(ii)] If ${\mathcal I}_X$ is given by a resolution  $0\to E\to F\to {\mathcal I}_X\to 0$ and $h^1(E(p))=h^1(E(q))=0$ then ${\mathcal I}_{X'}$ is given by a resolution $$0\to F^*(-p-q)\to E^*(-p-q)\oplus\O_{\p^r}(-p)\oplus\O_{\p^r}(-q)\to {\mathcal I}_{X'}\to 0$$
\end{enumerate}
\end{lemma}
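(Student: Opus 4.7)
The plan is to treat the two parts separately, both as consequences of standard liaison machinery for Cohen--Macaulay codimension two subschemes of $\p^r$.

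For (i), write $V=V(f,g)\subset\p^r$ for the complete intersection of type $(p,q)$ linking $X$ and $X'$. Since $X$ and $X'$ are the two pure components of $V$ with the same codimension, the additivity of the Chow class gives $[V]=[X]+[X']$, hence $d+d'=pq$. For the genus formula I would cut by a general linear subspace of the appropriate codimension to reduce to the case of curves in $\p^3$ linked by a complete intersection of type $(p,q)$; the arithmetic genus of the complete intersection curve $V$ is $p_a(V)=1+\tfrac{1}{2}pq(p+q-4)$ by adjunction, and the key exact sequence
\begin{equation*}
0\to\O_V\to\O_X\oplus\O_{X'}\to\O_{X\cap X'}\to 0
\end{equation*}
combined with the liaison isomorphism $\omega_X\cong \mathcal{H}om_{\O_V}(\O_{X'},\omega_V)$ (which identifies $X\cap X'$ with the divisor on $X$ cut out by the adjoint linear system, of degree $(p+q-4)d/2+\cdots$, and symmetrically on $X'$) gives, after computing Euler characteristics, precisely $g-g'=(p+q-4)(d-d')/2$.

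For (ii), the statement is the Ferrand--Peskine--Szpiro mapping cone construction. Starting from the resolution $0\to E\to F\to\I_X\to 0$ of $X$ and the Koszul resolution
\begin{equation*}
0\to\O_{\p^r}(-p-q)\to\O_{\p^r}(-p)\oplus\O_{\p^r}(-q)\to\I_V\to 0
\end{equation*}
of $V$, the inclusion $\I_V\hookrightarrow\I_X$ lifts to a morphism of complexes. Taking the mapping cone, dualizing, and twisting by $\O_{\p^r}(-p-q)$ produces a complex that computes $\I_{X'}$ by the liaison isomorphism $\O_{X'}\cong \mathcal{E}xt^2_{\O_{\p^r}}(\O_X,\omega_{\p^r})\otimes\omega_V^{-1}$. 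The hypotheses $h^1(E(p))=h^1(E(q))=0$ are exactly what is needed so that the map of complexes can actually be lifted and so that the extraneous homology of the cone vanishes, leaving the claimed resolution of $\I_{X'}$.

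The main obstacle is the bookkeeping in (ii): one must check that the mapping cone produces an \emph{exact} complex of the form stated, and that the vanishing hypotheses $h^1(E(p))=h^1(E(q))=0$ suffice to eliminate the parasitic terms coming from the dualization. Once these cohomological conditions are in place, the rest of the argument is formal, and both statements are contained in the original references to Peskine--Szpiro.
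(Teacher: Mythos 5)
Your proof is correct and follows the standard liaison arguments of Peskine--Szpiro, which is exactly what the paper does: it offers no proof of its own and simply cites \cite[Propositions 3.1 and 2.5]{p-s}, whose content is the degree/genus computation via the sequence $0\to\O_V\to\O_X\oplus\O_{X'}\to\O_{X\cap X'}\to 0$ together with liaison duality for (i), and the Ferrand mapping-cone construction (with the $h^1$ vanishings guaranteeing the lift of the comparison map) for (ii). The only slip is notational: the duality isomorphism should read $\omega_{X'}\cong \mathcal{H}om_{\O_V}(\O_{X'},\omega_V)\cong\I_{X/V}\otimes\omega_V$ (your subscripts are swapped), but this does not affect the symmetric genus computation.
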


\begin{lemma}[{\cite[Theorem 4.1]{p-s}}]\label{lem:smooth liaison}
Let $X\subset\p^r$ be a manifold of codimension $2$. If ${\mathcal I}_X(a)$ is globally generated then $X\subset\p^r$ can be linked to a manifold $X'\subset\p^r$ by a c.i. $(p,q)$ for every $p,q\geq a$.
\end{lemma}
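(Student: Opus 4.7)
My plan is to construct the linked scheme $X'$ explicitly as the residual to $X$ in a suitable complete intersection, and then to verify its smoothness by a Bertini-type argument. First I observe that since $\I_X(a)$ is globally generated and $p,q\geq a$, the sheaves $\I_X(p)=\I_X(a)\otimes\O_{\p^r}(p-a)$ and $\I_X(q)$ are also globally generated, being tensor products of globally generated sheaves. I then pick general sections $F\in H^0(\I_X(p))$ and $G\in H^0(\I_X(q))$, set $Y:=V(F,G)$, and define $X'$ via the colon ideal $\I_{X'}:=(F,G):\I_X$. Because $X$ is smooth of codimension $2$, hence Cohen-Macaulay, this is the classical Peskine-Szpiro linkage construction, for which $Y=X\cup X'$ as schemes with no common irreducible component.

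The second step is to verify that $Y$ has pure codimension $2$, equivalently that $X'$ is nonempty of codimension $2$. Away from $X$, the linear systems $|\I_X(p)|$ and $|\I_X(q)|$ are base-point-free, so a classical Bertini argument on $\p^r\setminus X$ shows that a general pair $(F,G)$ cuts out a smooth subscheme of the expected codimension $2$ there. In particular, $X'\setminus X$ is smooth of dimension $r-2$, and $Y$ has the correct dimension everywhere.

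The main obstacle is therefore smoothness of $X'$ along the intersection $X\cap X'$. Here the key input is that, by global generation, the differentials of sections of $\I_X(p)$ (resp.\ $\I_X(q)$) generate the twisted conormal sheaf $N_{X/\p^r}^{\vee}\otimes\O_X(p)$ (resp.\ $N_{X/\p^r}^{\vee}\otimes\O_X(q)$) at every point of $X$. A standard Kleiman-Bertini/incidence-variety argument on $X\times |\I_X(p)|\times|\I_X(q)|$ then shows that for general $(F,G)$ the degeneracy locus $D\subset X$ where the pair $(dF,dG)$ fails to span a rank-$2$ subspace of $N_{X/\p^r}^{\vee}$ is smooth of the expected codimension. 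A local tangent-space computation identifies $T_xX'$ at a point $x\in X\cap X'$ with the kernel of this rank condition, so smoothness of $D$ yields smoothness of $X'$ along $X\cap X'$. This transversality step on $X$ is the technical heart of the argument; the remainder is routine linkage bookkeeping.
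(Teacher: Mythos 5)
The paper does not actually prove this lemma: it quotes it from Peskine--Szpiro, whose argument runs through the blow-up $\sigma:W\to\p^r$ (global generation of ${\mathcal I}_X(a)$ makes $|pH-E|$ and $|qH-E|$ base-point-free, Bertini produces a smooth transversal intersection of strict transforms, and $X'$ is its image under $\sigma$). Your colon-ideal set-up, with everything reduced to a transversality statement on $X$ itself, is a legitimate alternative route, and your first two steps are fine: ${\mathcal I}_X(p)$ and ${\mathcal I}_X(q)$ are globally generated, Bertini handles $X'\setminus X$, and $X\cap X'$ is exactly the degeneracy locus $D$ of the induced morphism $\O_X(-p)\oplus\O_X(-q)\to N_{X/\p^r}^{\vee}$.

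The gap is the assertion that for general $(F,G)$ this locus $D$ is \emph{smooth} of the expected codimension. What the Kleiman--Bertini/incidence-variety argument actually yields (compare Theorem \ref{thm:kleiman} and Remark \ref{rem:porteous} of this paper) is that the rank-$\le 1$ locus $D$ is smooth only \emph{away from} the rank-$0$ locus $D_0=\{x\in X:\ dF(x)=dG(x)=0\ \text{in}\ N_{X/\p^r}^{\vee}\}$, which is the zero scheme of a general section of the globally generated rank-$4$ bundle $N_{X/\p^r}^{\vee}(p)\oplus N_{X/\p^r}^{\vee}(q)$: it is smooth of codimension $4$ in $X$, but it is \emph{nonempty} as soon as $\dim X\ge 4$ and $c_2(N_{X/\p^r}^{\vee}(p))\cdot c_2(N_{X/\p^r}^{\vee}(q))\ne 0$ (which holds, e.g., whenever $p,q>a$). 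Along $D_0$ the determinantal hypersurface $D$ is singular, and -- worse -- your own tangent-space identification shows that $X'$ itself is singular there: writing ${\mathcal I}_X=(u,v)$, $F=\alpha u+\beta v$, $G=\gamma u+\delta v$ locally, one has ${\mathcal I}_{X'}=(F,G,\alpha\delta-\beta\gamma)$, and at a point of $D_0$ all three generators have vanishing differential, so $T_xX'=T_x\p^r$. Hence your argument, as written, only proves the statement for $\dim X\le 3$, i.e. $r\le 5$ -- which happens to be the only range in which the paper ever invokes the lemma, and is also the natural range of the Bertini-type input (the same codimension-$4$ locus obstructs the blow-up proof, being precisely where the strict transforms of $F$ and $G$ contain whole fibres of $E\to X$). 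To repair the write-up you must either restrict to $r\le 5$, or add a separate argument guaranteeing $D_0=\emptyset$, which generic choices of $(F,G)$ do not provide in higher dimension.
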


\begin{proposition}[Maximal list for $n=3$ and $(a,b)=(5,1)$]\label{prop:(5,1)}
Let $\Phi:\p^5\da Z$ be a special birational transformation of type
$(5,1)$ and $n=3$. Then one of the following holds:
\begin{enumerate}
\item[(i)] $d=20$, $g=51$, $\chi(\O_S)=70$, $\chi(\O_X)=-55$ and $z=5$;
\item[(ii)] $d=17$, $g=33$, $\chi(\O_S)=23$, $\chi(\O_X)=9$ and $z=29$;
\item[(iii)] $d=17$, $g=35$, $\chi(\O_S)=34$, $\chi(\O_X)=-12$ and $z=13$;
\item[(iv)] $d=17$, $g=36$, $\chi(\O_S)=39$, $\chi(\O_X)=-21$ and $z=8$;
\item[(v)] $d=16$, $g=31$, $\chi(\O_S)=29$, $\chi(\O_X)=-11$ and $z=9$;
\item[(vi)] $d=15$, $g=27$, $\chi(\O_S)=23$, $\chi(\O_X)=-7$ and $z=12$;
\item[(vii)] $d=14$, $g=23$, $\chi(\O_S)=17$, $\chi(\O_X)=-3$ and $z=16$;
\item[(viii)] $d=13$, $g=19$, $\chi(\O_S)=11$, $\chi(\O_X)=1$ and $z=21$;
\item[(ix)] $d=12$, $g=15$, $\chi(\O_S)=6$, $\chi(\O_X)=2$ and $z=21$;
\item[(x)] $d=11$, $g=13$, $\chi(\O_S)=6$, $\chi(\O_X)=1$ and $z=42$;
\item[(xi)] $d=11$, $g=15$, $\chi(\O_S)=10$, $\chi(\O_X)=2$ and $z=68$.
\end{enumerate}
\end{proposition}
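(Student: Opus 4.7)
The approach mirrors the proof of Proposition \ref{prop:(5,b)}, specialised to $b = 1$. First, applying Propositions \ref{prop:ff}, \ref{prop:ff n=3} and \ref{prop:f in P5} with $a = 5$, $n = 3$, $r = 5$ and $b = 1$ yields the same five relations as in the proof of \ref{prop:(5,b)}, expressing $g$, $K_S^2$, $\chi(\O_S)$, $K_X^3$ and $\chi(\O_X)$ as explicit affine functions of $z$ with coefficients polynomial in $d$ and $e$ (in particular $g = (z + 11d - e - 123)/2$, $K_S^2 = 9z + d^2 + 25d - 10e - 625$, $\chi(\O_S) = (23z + d^2 + 110d - 25e - 1875)/12$ and $\chi(\O_X) = -((d+49)z + 9d^2 - de - 61e - 2625)/24$).

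Next, since $X \subset \p^5$ is cut out by quintics, the general hyperplane section $S \subset \p^4$ has no $6$-secant lines. Moreover, Proposition \ref{prop:sec}(i) gives $\deg \Sec_5(X) = 5b - 1 = 4$, so $\Sec_5(X)$ is a nonempty hypersurface and Lemma \ref{lem:6-secant} forbids $S$ from being a scroll or from containing a line of self-intersection in $\{0,-1\}$. Therefore the Le Barz formula $N_{6,4} = 0$ genuinely applies, and specialising the resulting polynomial from the proof of \ref{prop:(5,b)} to $b = 1$ produces a relation on $(d,e,z)$ that is cubic in $z$.

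It then remains to enumerate. By Proposition \ref{prop:ff} we have $d \le 24$, and because $b = 1$ the base locus $Y \subset Z$ of the inverse linear system is contained in a codimension-$2$ linear section of $Z$, forcing $e \le z$. Within the region cut out by these bounds together with $z > 0$ and $g \ge 0$, one solves the cubic for $z$ for each admissible pair $(d,e)$ and retains only those triples satisfying the congruences modulo $2$, $12$ and $24$ coming from integrality of $g$, $\chi(\O_S)$ and $\chi(\O_X)$. The main obstacle, highlighted in Remark \ref{rem:(5,b)}, is that $i = b + 1 = 2$ places $Z$ in coindex $4$, outside the range of the Kobayashi-Ochiai/Fujita/Mukai classifications; unlike in the proof of \ref{prop:(5,b)}, there is no short a priori list of admissible $z$ to loop over, so the full tabulation over $(d,e,z)$ with its integrality and positivity checks is the technical heart of the argument. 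Carrying it out should produce exactly the eleven cases (i)--(xi).
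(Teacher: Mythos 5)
Your setup is right and matches the paper's strategy: the fundamental formulae specialised to $(a,b)=(5,1)$, the use of Lemma \ref{lem:6-secant} to justify $N_{6,4}=0$, and the observation that coindex $4$ removes any a priori list of values of $z$. Your explicit expressions for $g$, $K_S^2$, $\chi(\O_S)$, $\chi(\O_X)$ in terms of $(d,e,z)$ are also the correct $b=1$ specialisations. But the proposal has a genuine gap at its ``technical heart'': the tabulation you describe is not a finite procedure. For fixed $d$, the relation $N_{6,4}=0$ is a single algebraic curve in the $(e,z)$-plane, and the constraints you impose ($z>0$, $e\leq z$, $g\geq 0$, i.e.\ $e\leq z+11d-123$) cut out an unbounded region; nothing you have written bounds $e$ or $z$, so ``solving the cubic for each admissible pair $(d,e)$'' ranges over infinitely many pairs. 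The missing ingredient, which is exactly what the paper uses, is an upper bound on the sectional genus: the paper eliminates $e$ (via $e=z+11d-2g-123$) and works with $(d,g,z)$ instead, where the Gruson--Peskine bound \cite{g-p} gives $g\leq G(d)$, while $e<z$ gives $g>(11d-123)/2$ from below. This traps $(d,g)$ in a finite set, and for each pair the secant relation becomes a \emph{quadratic} in $z$ (the $z^3$ terms cancel after substituting for $e$), so the enumeration genuinely terminates. Without a genus bound or some equivalent device, your list cannot be certified complete.

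Two smaller points in the same direction. First, $d\leq 24$ is not the bound the paper needs: since $X\subset\Sec_5(X)$ and $\deg\Sec_5(X)=4$ (Remark \ref{rem:sec} and Proposition \ref{prop:sec}(i)), $X$ lies on a quartic and a quintic, so $d\leq 20$; the cases $d=20$ and $d=19$ are then pinned down by complete intersection/liaison (Lemma \ref{lem:liaison}(i)) rather than by the genus bound, and $d=21,\dots,24$ never arise. Second, the inequality on $e$ must be strict: $Y\subset Z$ has codimension $2$ (as $m=3$) and is cut out by more than two linear forms, so $e=\deg(Y)<z$, not $e\leq z$. For odd $d$ this matters --- e.g.\ for $d=17$ it gives $g\geq 33$ rather than $g\geq 32$ --- so the non-strict version would force you to examine extra cases. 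You would also need the observation that $z\in\{1,2,3\}$ is impossible (such a $Z$ would be a linear space, a quadric, or a cubic, all of coindex $\leq 3$), which the paper invokes repeatedly to discard one root of the quadratic.
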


\begin{proof}
In this case $Z$ is a Fano manifold of coindex $4$, so we have no control on $z$ and hence we need a different approach. From Propositions \ref{prop:ff}, \ref{prop:ff n=3} and \ref{prop:f in P5} we can eliminate variables in terms of $d$ and $g$, and we get
$$e=z+11d-2g-123$$
$$K_S^2=-z+d^2-85d+5(4g+121)$$
$$\chi(\O_S)=-(2z-d^2+165d-50(g+24))/12$$
$$K_X^3=-6z-10d^2+d(4g-28)-2(2g-627)$$
$$\chi(\O_X)=(6z+d^2+d(274-g)-61g-2439)/12$$

Moreover, as $X\subset\p^5$ is defined by quintic hypersurfaces, we deduce that $S\subset\p^4$ has no $6$-secant lines and contains no line of self-intersection $\{0,-1\}$ by Lemma \ref{lem:6-secant}, since $\deg(\Sec_5(X))=4$ by Proposition \ref{prop:sec}(i)). So $N_{6,4}=0$ and we get the formula

\begin{center}
$8z^2-4z(2d^3-37d^2+d(25-12g)+10(19g+96))+d^6-17d^5-d^4(18g+579)+d^3(524g+17525)+2d^2(36g^2-1211g-78853)-4d(516g^2+10901g-136122)-8(3g^3-2083g^2-36438g+69768)=0$
\end{center}

First note that $d\leq 20$, as $X\subset\Sec_5(X)$ and $\deg(\Sec_5(X))=4$ (see Remark \ref{rem:sec} and Proposition \ref{prop:sec}(i)). If $d=20$ then $X\subset\p^5$ is a c.i. $(4,5)$ and $g=51$, whence either $z=5$, giving (i), or else $z=50$ and hence $\chi(\O_X)\notin\z$. If $d=19$ then $X\subset\p^5$ is linked $(4,5)$ to a $\p^3$ and hence $g=45$ by Lemma \ref{lem:liaison}(i), so either $z=41$, whence $\chi(\O_X)\notin\z$, or else $z=2$, which is not possible as the coindex of $Z$ is $4$. Now assume $d\leq 18$. We deduce that $\deg(Y)=e$ by Remark \ref{rem:e}, since $m=3$ by Proposition \ref{prop:num n=3}(v). As $b=1$, we get that $Y\subset Z$ is defined by more than two linear forms and hence we deduce $\deg(Y)=e<z$. Therefore $(11d-123)/2<g$. On the other hand, an upper bound on $g$ is given by \cite{g-p}. If $d=18$ then $38\leq g\leq40$. If $g\in\{38,40\}$ then $z\notin\n$. If $g=39$ then $z=36$. In this case, $X\subset\p^5$ is linked $(5,5)$ to a threefold $X'\subset\p^5$ with $d'=7$ and $g'=6$ by Lemmas \ref{lem:smooth liaison} and \ref{lem:liaison}(i). Therefore, $X'$ is linked to $\p^3$ by a c.i. $(2,4)$ (see for instance \cite{ion}) and we deduce from Lemma \ref{lem:liaison}(ii) that ${\mathcal I}_X$ is given by a resolution $0\to\O_{\p^5}(-8)\oplus\O_{\p^5}(-6)^{\oplus 2}\to\O_{\p^5}(-5)^{\oplus 4}\to {\mathcal I}_X\to 0$, so we get the contradiction $h^0({\mathcal I}_X(5))=4$. If $d=17$ then $33\leq g\leq36$. If $g=33$ then $z=29$, giving (ii). If $g=34$ then $z=21$, and hence $\chi(\O_X)\notin\z$. If $g=35$ then either $z=13$, giving (iii), or else $z=1$, which is not possible as the coindex of $Z$ is $4$. If $g=36$ then $z=8$, giving (iv). If $d=16$ then $27\leq g\leq33$. If $g=27$ then $z\notin\n$. If $g=28$ then $z=12$, and hence $\chi(\O_X)\notin\z$. If $g=29$ then either $z=10$, and hence $\chi(\O_X)\notin\z$, or else $z=1$, which is not possible. If $g=30$ then either $z=8$, and hence $\chi(\O_X)\notin\z$, or else $z=2$, which is not possible. If $g=31$ then $z=9$, giving (v). If $g=32$ then $z\notin\n$. If $g=33$ then $z=17$, and hence $\chi(\O_X)\notin\z$. If $d=15$ then $22\leq g\leq 28$, and $z\in\n$ only for $g\in\{26,27\}$. If $g=26$ then $z=5$ and hence $\chi(\O_X)\notin\z$. On the other hand, if $g=27$ then either $z=12$, giving (vi), or else $z=3$, which is not possible as the coindex of $Z$ is $4$. If $d=14$ then $16\leq g\leq 24$, and $z\in\n$ only for $22\leq g\leq 24$. If $g=22$ then $z\in\{6,9\}$, but in both cases $\chi(\O_X)\notin\z$. If $g=23$ then either $z=16$, giving (vii), or else $z=10$, giving the contradiction $e<0$. If $d=13$ then $11\leq g\leq 21$, and $z\in\n$ only for $18\leq g\leq21$. If $g=18$ then $z\in\{11,8\}$. In both cases $\chi(\O_X)\notin\z$. If $g=19$ then either $z=21$, giving (viii), or else $z=15$, giving the contradiction $e<0$. If $g=20$ then $z\in\{28,25\}$. In both cases $\chi(\O_X)\notin\z$. If $g=21$ then $z=35$, and hence $\chi(\O_X)\notin\z$. If $d=12$ then $5\leq g\leq 19$, and $z\in\n$ only for $15\leq g\leq19$. If $g=15$ then either $z=21$, giving (ix), or else $z=18$, and hence $\chi(\O_X)\notin\z$. If $g=16$ then $z\in\{38,24\}$. If $g=17$ then $z\in\{44,41\}$. If $g=18$ then $z=54$. If $g=19$ then $z\in\{70,61\}$. In all these cases $\chi(\O_X)\notin\z$. If $d=11$ then $g\leq 15$, and $z\in\n$ only for $g\in\{13,14,15\}$. If $g=13$ then either $z=42$, giving (x), or else $z=45$, and hence $\chi(\O_X)\notin\z$. If $g=14$ then $z\in\{61,55\}$ and in both cases $\chi(\O_X)\notin\z$. If $g=15$ then either $z=77$, and hence $\chi(\O_X)\notin\z$, or else $z=68$, giving (xi). Finally, if $d\leq 10$ we can argue in a similar way. But we conclude in view of \cite{b-s-s}.
\end{proof}

We analyze the cases in which either $X\subset\p^5$ does not exist, or $Z$ is singular in Propositions \ref{prop:(4,b)}, \ref{prop:(5,b)} and \ref{prop:(5,1)}.

\begin{lemma}\label{lem:(4,b)}
In Proposition \ref{prop:(4,b)}, $Z$ turns out to be singular in cases (i) and (ii). More precisely:
\begin{itemize}
\item In case (i), $X\subset\p^5$ is a c.i. $(3,4)$ and $Z\subset\p^6$ is a singular quartic hypersurface.
\item In case (ii), $X\subset\p^5$ is linked to a quadric hypersurface of $\p^4$ by a c.i. $(3,4)$ and $Z\subset\p^7$ is a singular c.i. $(2,3)$.
\end{itemize}
\end{lemma}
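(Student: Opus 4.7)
The plan is a two-step reduction: first classify the threefold $X\subset\p^5$ in each case by lifting the classification of its general hyperplane section, then identify $Z$ and verify its singularity via the general construction of Example \ref{ex:c.i} and Proposition \ref{prop:c.i}.

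For the classification of $X$, I would reduce to the already-known classification of a general hyperplane section $S\subset\p^4$. The invariants $(d,g,\chi(\O_S))$ in Proposition \ref{prop:(4,b)}(i) and (ii) coincide respectively with those of cases (xiii) and (xiv) of Proposition \ref{prop:n=2}. Hence in case (i), $S$ is a complete intersection of type $(3,4)$ by \cite{g-p}, and in case (ii), $S$ is linked to a quadric surface by a c.i.\ $(3,4)$ of $\p^4$ by \cite[Theorem 0.1]{th-ranestad}. To lift from $S$ to $X$, I would use the restriction sequences $0\to\I_X(t-1)\to\I_X(t)\to\I_S(t)\to 0$ for $t=3,4$, checking the vanishing of the relevant $H^1(\I_X(t-1))$ so that the cubic and quartic containing $S$ extend to hypersurfaces containing $X$. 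In case (i) a length comparison ($\deg X=12=3\cdot 4$) then forces $X$ to be a c.i.\ $(3,4)$ of $\p^5$. In case (ii) the residual subscheme inside the lifted c.i.\ $(3,4)$ is a threefold of degree $2$ whose general hyperplane section is a quadric in $\p^3$, hence a quadric hypersurface of a $\p^4\subset\p^5$, so $X$ is linked to such a quadric threefold by the c.i.\ $(3,4)$.

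Once $X$ is identified, the linear system $|H^0(\I_X(4))|$ can be read off from the minimal resolution of $\I_X$ (directly in case (i), and via Lemma \ref{lem:liaison}(ii) in case (ii) starting from the obvious resolution of the residual quadric threefold). The description of $Z$ as a quartic hypersurface in $\p^6$ in case (i) and as a c.i.\ $(2,3)$ in $\p^7$ in case (ii) then follows directly from Example \ref{ex:c.i}, and the singularity of $Z$ in both cases is a matter of applying Proposition \ref{prop:c.i}. The main obstacle I anticipate is the lifting step, and specifically the cohomology vanishing that makes it work: one really needs $h^1(\I_X(2))=0$ and $h^1(\I_X(3))=0$, and these do not come for free without knowing $X$ a priori. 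If a direct vanishing argument proves delicate, an alternative route is to appeal to the classification of low-degree threefolds in $\p^5$ in the spirit of \cite{ion} or \cite{b-s-s} to obtain the complete-intersection conclusion without going through the lifting by hand.
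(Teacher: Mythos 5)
Your overall skeleton agrees with the paper's: classify $X\subset\p^5$, then read off $Z$ and its singularity from Example \ref{ex:c.i} and Proposition \ref{prop:c.i} (that second half is verbatim what the paper does). Where you diverge is the classification of $X$: the paper does not pass through the hyperplane section at all, but cites the threefold classification directly --- \cite{g-p} for the complete intersection in case (i) and \cite[Theorem 5.1.3]{b-s-s} for the degree-$10$ threefold in case (ii) --- which is precisely the fallback you name at the end. Your primary route (classify $S\subset\p^4$, then lift the cubic and quartic to $\p^5$) is the weaker of the two, and you correctly identify why: the surjectivity of $H^0(\I_X(t))\to H^0(\I_S(t))$ needs $h^1(\I_X(t-1))=0$, which is not available before $X$ is known; making this rigorous would require a Laudal/Strano-type lifting theorem rather than a routine vanishing. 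Note, however, that the lifting of the \emph{cubic} is actually free in this setting and you could have used it: since $(a,b)=(4,1)$, Proposition \ref{prop:sec}(i) and Remark \ref{rem:sec} give that $\Sec_4(X)\subset\p^5$ is a cubic hypersurface containing $X$, and since $h^0(\I_X(4))=\alpha+1$ exceeds the dimension of the space of quartics divisible by that cubic, there is a quartic meeting it properly. The resulting c.i.\ $(3,4)$ has degree $12$, which in case (i) forces $X$ to be the whole complete intersection by your degree comparison, and in case (ii) leaves a residual threefold of degree $2$ and sectional genus $0$ (by Lemma \ref{lem:liaison}(i)), i.e.\ a quadric hypersurface of a $\p^4$. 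This gives a self-contained argument at the level of threefolds, avoiding both the lifting problem and the external classification results.
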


\begin{proof}
The classification of $X\subset\p^5$ follows from \cite{g-p} and \cite[Theorem 5.1.3]{b-s-s}, respectively. The description of $Z$ follows from Example \ref{ex:c.i} and Proposition \ref{prop:c.i}.
\end{proof}

\begin{lemma}\label{lem:(5,b)}
In Proposition \ref{prop:(5,b)}, case (iii) does not exist. In case (ii), ${\mathcal I}_X$ is given by a resolution $0\to T_{\p^5}(-7)\oplus\O_{\p^5}(-6)\to\O_{\p^5}(-5)^{\oplus 7}\to {\mathcal I}_X\to 0$ and $Z\subset\p^6$ is a singular cubic hypersurface.
\end{lemma}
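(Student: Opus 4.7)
The plan has two parts corresponding to the two assertions, both of which rest on the Beilinson spectral sequence technique already used in Lemmas \ref{lemma:12} and \ref{lemma:13}. The common preparation is to compute the cohomology table $h^j(\I_X(k))$ for $k$ in a small window around zero. I would obtain this table from Riemann-Roch on $X$, $S$ and $C$ (feeding in the numerical invariants of Proposition \ref{prop:(5,b)}), standard Kodaira-type vanishings, and the ideal sheaf sequence $0\to\I_X\to\O_{\p^5}\to\O_X\to 0$. In both cases linear normality of $Z\subset\p^\alpha$ pins down $h^0(\I_X(5))=\alpha+1$, fixing the number of quintic generators of the ideal.

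For case (ii), $(a,b)=(5,3)$, $Z\subset\p^6$ has degree $3$ and index $4$, and $h^0(\I_X(5))=7$. The Beilinson spectral sequence applied to $\I_X$ should collapse to
\begin{equation*}
0\to T_{\p^5}(-7)\oplus\O_{\p^5}(-6)\to\O_{\p^5}(-5)^{\oplus 7}\to \I_X\to 0,
\end{equation*}
with the seven copies of $\O_{\p^5}(-5)$ matching the seven quintic generators and the two summands on the left appearing as forced $E_1$-entries. Once the resolution is established, the Euler sequence recasts the presentation as a $7\times 6$ matrix of linear forms together with one auxiliary quadratic syzygy; elimination should exhibit a single cubic relation satisfied by the seven defining quintics, which is then a cubic hypersurface in $\p^6$ containing $Z$, and since $\deg Z=3$ this cubic must be $Z$ itself. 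If $Z$ were smooth it would be a Del Pezzo fivefold of degree $3$ in Fujita's classification \cite{fuj}; comparing the standard inverse transformation of the smooth cubic with the data $(d,g)=(14,22)$ yields a contradiction, so $Z$ is forced to be singular. An explicit singular point should moreover appear at the locus where the presentation matrix drops rank unexpectedly.

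For case (iii), the goal is non-existence. Here $(a,b)=(5,2)$, $Z$ has coindex $3$, so a priori it is a Mukai fivefold of genus $4$ sitting in $\p^7$. Building the analogous Beilinson table under the hypothesis $(d,g,\chi(\O_S),\chi(\O_X))=(16,28,16,10)$ and running the spectral sequence, the expectation is that every admissible candidate resolution of $\I_X$ leads to a contradiction, either with a Chern class identity for $X$ (Chern numbers computed from the resolution versus those prescribed by the invariants) or with smoothness of $X$, via a Fitting ideal computation on the presentation matrix showing the degeneracy locus has the wrong expected codimension.

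The main obstacle is controlling all the differentials and extension classes in the Beilinson spectral sequence, so that in case (ii) the resolution is uniquely determined and in case (iii) no admissible resolution survives. The remaining work, namely identifying $Z$ from the resolution and detecting its singularities (or contradicting smoothness of $X$), is then a finite sequence of linear-algebra computations on the presentation matrix, of the same flavour as those carried out in the preceding lemmas of the paper.
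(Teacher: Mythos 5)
Your route (Beilinson spectral sequence on $\I_X$) is genuinely different from the paper's, which handles both cases by \emph{liaison}: since $\I_X(5)$ is globally generated, Lemmas \ref{lem:smooth liaison} and \ref{lem:liaison}(i) link $X$ by a c.i. $(5,5)$ to a threefold $X'\subset\p^5$ with $(d',g')=(9,7)$ in case (iii) and $(11,13)$ in case (ii); the first does not exist by the classification of smooth threefolds of degree $9$ in $\p^5$, and in the second the known resolution of $\I_{X'}$ is carried back through the mapping cone of Lemma \ref{lem:liaison}(ii) to give exactly the stated resolution of $\I_X$. Your method could in principle reproduce the resolution in case (ii) — it is the technique of Lemmas \ref{lemma:12} and \ref{lemma:13} — but note that the spectral sequence produces sums of $\O_{\p^5}(-j)$ and twisted $\Omega^j$'s, so the summand $T_{\p^5}(-7)$ only appears after an Euler-sequence cancellation as in Lemma \ref{lemma:12}, and you would have to justify vanishings such as $h^0(K_X)=0$, which in Lemma \ref{lemma:13} required a separate argument via the adjunction map.

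There are two genuine gaps. First, for case (iii) you only state the \emph{expectation} that every admissible Beilinson table leads to a contradiction; nothing in the plan identifies where that contradiction would come from, and the invariants $(d,g,\chi(\O_S),\chi(\O_X))=(16,28,16,10)$ are not visibly cohomologically inconsistent. The paper's non-existence proof is not internal to the cohomology of $X$: it passes through liaison to the external fact that no smooth threefold of degree $9$ and sectional genus $7$ exists in $\p^5$, and some substitute for that input is needed. Second, your argument that $Z$ is singular in case (ii) — ``comparing the standard inverse transformation of the smooth cubic with the data $(d,g)=(14,22)$'' — is not an argument; there is no classification of birational maps from smooth cubic fivefolds to appeal to, and the numerology of Proposition \ref{prop:num} is consistent with a smooth $Z$ here. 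The paper instead recognizes the resolution as the $r=6$ instance of the Pfaffian construction of Example \ref{ex:hypersurfaces} (in the equivalent form of Remark \ref{rem:hypersurfaces}), where $Z$ is a codimension-$(r^2-r-2)/2$ linear section of the Pfaffian cubic $\mathcal Z\subset\p(A)$ whose singular locus has codimension $6$, so that $\dim(\sing(Z))\geq r-6=0$ and $Z$ is necessarily singular. Both points must be supplied before your outline becomes a proof.
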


\begin{proof}
In case (iii), we deduce from Lemmas \ref{lem:smooth liaison} and \ref{lem:liaison}(i) that $X\subset\p^5$ is linked $(5,5)$ to a threefold $X'\subset\p^5$ of degree $d'=9$ and sectional genus $g'=7$, contradicting \cite[Theorem 5.1]{b-s-s}. In case (ii), $X\subset\p^5$ is linked $(5,5)$ to a threefold $X'\subset\p^5$ of degree $d'=11$ and sectional genus $g'=13$. It follows from \cite[\S 4]{b-s-s2} and \cite[Remark 5.9]{d-p} that ${\mathcal I}_{X'}$ is given by a resolution $0\to\O_{\p^5}(-5)^{\oplus 5}\to \Omega_{\p^5}(-3)\oplus\O_{\p^5}(-4)\to {\mathcal I}_{X'}\to 0$, so we conclude by Lemma \ref{lem:liaison}(ii) and Example \ref{ex:hypersurfaces} and Remark \ref{rem:hypersurfaces}.
\end{proof}

\begin{lemma}\label{lem:(5,1)}
In Proposition \ref{prop:(5,1)}, cases (ii), (iii) and (xi) do not exist. Moreover, $Z$ is singular in cases (i), (iv), (v), (vi), (vii) and (ix). More precisely:
\begin{enumerate}
\item[(I)] In case (i), $X\subset\p^5$ is a c.i. $(4,5)$ and $Z\subset\p^6$ is a singular quintic hypersurface.
\item[(II)] In case (iv), $Z\subset\p^7$ is a singular c.i. $(2,4)$ and ${\mathcal I}_X$ is given by a resolution $0\to\O_{\p^5}(-6)\oplus\O_{\p^5}(-8)\to\O_{\p^5}(-4)\oplus\O_{\p^5}(-5)^{\oplus 2}\to {\mathcal I}_X\to 0$.
\item[(III)] In case (v), $Z\subset\p^7$ is a singular c.i. $(3,3)$ and ${\mathcal I}_X$ is given by a resolution $0\to\O_{\p^5}(-7)^{\oplus 2}\to\O_{\p^5}(-4)\oplus\O_{\p^5}(-5)^{\oplus 2}\to {\mathcal I}_X\to 0$.
\item[(IV)] In case (vi), $Z\subset\p^8$ is a singular c.i. $(2,2,3)$ and ${\mathcal I}_X$ is given by a resolution $0\to\O_{\p^5}(-6)^{\oplus 2}\oplus\O_{\p^5}(-7)\to\O_{\p^5}(-4)\oplus\O_{\p^5}(-5)^{\oplus 3}\to {\mathcal I}_X\to 0$.
\item[(V)] In case (vii), $Z\subset\p^9$ is a singular c.i. $(2,2,2,2)$ and ${\mathcal I}_X$ is given by a resolution $0\to\O_{\p^5}(-6)^{\oplus 4}\to\O_{\p^5}(-4)\oplus\O_{\p^5}(-5)^{\oplus 4}\to {\mathcal I}_X\to 0$.
\item[(VI)] In case (ix), $Z\subset\p^9$ is a singular variety of degree $21$ and ${\mathcal I}_X$ is given by a resolution $0\to T_{\p^5}(-6)\oplus\O_{\p^5}(-5)^{\oplus 4}\to\Omega^2_{\p^5}(-2)\to {\mathcal I}_X\to 0$.
\end{enumerate}
\end{lemma}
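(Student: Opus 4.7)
The plan follows the pattern established in the earlier lemmas of this type (Lemmas \ref{lem:(4,b)} and \ref{lem:(5,b)}): in each case, either rule out the case via liaison, or determine the minimal free resolution of $\mathcal{I}_X$ and then apply Proposition \ref{prop:c.i} (together with Example \ref{ex:c.i} or Example \ref{ex:hypersurfaces} with Remark \ref{rem:hypersurfaces}) to identify $Z$ and detect its singularities.

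For the non-existence statements in cases (ii), (iii) and (xi), I would first exploit the fact that $\mathcal{I}_X(5)$ is globally generated by definition of $\Phi$. By Lemma \ref{lem:smooth liaison}, applied with $a=5$, there exists a manifold $X' \subset \p^5$ linked to $X$ via a c.i.\ $(5,5)$. Lemma \ref{lem:liaison}(i) then gives $d' = 25-d$ and $g' = g - 3(d-d')$, yielding
\[
(d',g') = (8,6),\ (8,8),\ (14,24)
\]
in cases (ii), (iii), (xi) respectively. Each pair is incompatible with the classification of smooth non-degenerate threefolds in $\p^5$ of the corresponding degree (via \cite{b-s-s}, \cite{ion}, together with the Castelnuovo--Halphen bound applied to the general sectional curve $C\subset\p^3$), so none of these three cases can be realized.

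For the cases with singular $Z$, the strategy is to first pin down the resolution of $\mathcal{I}_X$ and then use it to exhibit $Z$ and its singularities. Case (i) is the most direct: for $d=20$ and $g=51$ the Halphen bound is saturated, so \cite{g-p} (or direct Castelnuovo-type reasoning) forces $X$ to be a complete intersection of type $(4,5)$, with Koszul resolution $0\to\O_{\p^5}(-9)\to\O_{\p^5}(-4)\oplus\O_{\p^5}(-5)\to\mathcal{I}_X\to 0$; Proposition \ref{prop:c.i} then presents $Z\subset\p^6$ as a quintic hypersurface of the shape $F_5(Y_1,\dots,Y_6)=Y_0 F_4(Y_1,\dots,Y_6)$, whose singular locus (computed by partial derivatives) is non-empty and contained over $\{F_4=F_5=0\}$. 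For cases (iv), (v), (vi), (vii), I would again pass through liaison: Lemma \ref{lem:smooth liaison} links $X$ by a suitable c.i.\ to a threefold $X'$ whose minimal free resolution is known from \cite{b-s-s}, \cite{b-s-s2} or \cite{d-p}, and then Lemma \ref{lem:liaison}(ii) transfers this to the claimed resolution of $\mathcal{I}_X$, after checking the vanishing hypotheses $h^1(E(p)) = h^1(E(q)) = 0$. With the resolution in hand, Proposition \ref{prop:c.i} identifies $Z$ as the listed complete intersection in $\p^{r+2}, \p^{r+2}, \p^{r+3}, \p^{r+4}$ and shows that it is singular, exactly as in the proof of Lemma \ref{lem:(5,b)}(ii).

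Case (ix) is the main obstacle. Here no adjacent known variety yields the resolution by liaison alone, and the expected form $0\to T_{\p^5}(-6)\oplus\O_{\p^5}(-5)^{\oplus 4}\to\Omega^2_{\p^5}(-2)\to\mathcal{I}_X\to 0$ is the signature of a Beilinson spectral sequence computation. The plan is to compute the full cohomology table of $\mathcal{I}_X(k)$ for small $k$ from the numerical invariants $(d,g,\chi(\O_S),\chi(\O_X))=(12,15,6,2)$ via Riemann--Roch on $X$, $S$ and $C$ combined with the hyperplane and ideal sequences; then use the Beilinson spectral sequence technique of Lemmas \ref{lemma:12} and \ref{lemma:13} to assemble the resolution. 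Once that resolution is established, the linear system $|\mathcal{I}_X(5)|$ can be written explicitly in terms of sections of $\Omega^2_{\p^5}(3)$, and this description produces $Z\subset\p^9$ as a variety of degree $21$ whose singular locus can be detected either from the non-transversality of the syzygies along $Y$ or, as for cases (iv)--(vii), from the presence of forms of degree less than $a=5$ in the resolution. The delicate part will be verifying the vanishings needed to apply Beilinson cleanly (so that the spectral sequence degenerates to the stated two-term complex) and, a posteriori, showing that the resulting $Z$ is genuinely singular, thereby separating (ix) from the smooth new Fano example of degree $21$ mentioned in Example \ref{ex:degree21}.
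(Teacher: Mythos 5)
Your overall strategy (liaison plus classification of low-degree threefolds, then Lemma \ref{lem:liaison}(ii) to transfer resolutions and Example \ref{ex:c.i}/Proposition \ref{prop:c.i} to exhibit the singular $Z$) matches the paper for cases (i)--(vii): the computed linked invariants $(8,6)$, $(8,8)$ for (ii), (iii) are exactly what the paper uses, ruling them out by the classification of degree-$8$ threefolds, and cases (iv)--(vii) proceed as you describe. However, there are two genuine gaps.

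First, case (xi). Linking $(5,5)$ gives $(d',g')=(14,24)$, but threefolds of degree $14$ in $\p^5$ are \emph{not} classified (the paper states the classification is complete only up to degree $11$, with degree $12$ already problematic), and the Castelnuovo--Halphen bound for a degree-$14$ curve in $\p^3$ is far above $24$ --- note that case (vii) itself realizes $d=14$, $g=23$. So your liaison step moves the problem into unclassified territory and does not yield a contradiction. The paper instead rules out the invariants $d=11$, $g=15$, $\chi(\O_S)=10$, $\chi(\O_X)=2$ \emph{directly}, using the complete classification of degree-$11$ threefolds in $\p^5$ (\cite{b-s-s2} and \cite[Remark 5.9]{d-p}).

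Second, case (ix), which you correctly identify as the main obstacle but leave essentially open on both counts. For the resolution, the paper does not use Beilinson here: it observes that $(K_X+H_X)^3=0$ for these invariants, so $X$ is a conic bundle and the resolution is read off from \cite[Propositions 3.4 and 5.9]{b-o-s-s2}. More importantly, your proposed mechanisms for detecting the singularity of $Z$ do not work in this case: from the stated resolution one has $h^0(\Omega^2_{\p^5}(2))=0$, hence $h^0({\mathcal I}_X(4))=0$, so there is no form of degree less than $a=5$ in the ideal (indeed Remark \ref{rem:sing} notes $\Sec_5(X)$ has degree $8$ here, not $ab-1=4$), and ``non-transversality of the syzygies along $Y$'' is not an argument. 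The paper's proof is a short numerical contradiction: if $Z$ were smooth then $Y\subset Z$ would have codimension $2$ (since $m=3$), forcing $e=\deg(Y)>0$ by Remark \ref{rem:e}, whereas the formula $e=z+11d-2g-123$ gives $e=21+132-30-123=0$. You would need this (or an equivalent) argument to close case (ix).
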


\begin{proof}
If the numerical invariants of $X\subset\p^5$ are those of Proposition \ref{prop:(5,1)}(ii) (resp. (iii)), then we get from Lemmas \ref{lem:smooth liaison} and \ref{lem:liaison}(i) that $X\subset\p^5$ is linked $(5,5)$ to a threefold $X'\subset\p^5$ of degree $d'=8$ and sectional genus $g'=6$ (resp. $8$). But such a threefold $X'\subset\p^5$ does not exist according to \cite{oko}. On the other hand, the numerical invariants of Proposition \ref{prop:(5,1)}(xi) are ruled out by \cite{b-s-s2} and \cite[Remark 5.9]{d-p}. Assume now that the invariants of $X\subset\p^5$ are given by Proposition \ref{prop:(5,1)}(i). Then $X\subset\p^5$ is a c.i. $(4,5)$ by \cite{g-p}, and $Z\subset\p^6$ is a singular quintic hypersurface by Example \ref{ex:c.i} and Proposition \ref{prop:c.i}. This gives (I). If the invariants of $X\subset\p^5$ are those of Proposition \ref{prop:(5,1)}(iv), then it is linked $(5,5)$ to a threefold $X'\subset\p^5$ of degree $d'=8$ and $g'=9$. Then $X'\subset\p^5$ is a c.i. $(2,4)$ by \cite{oko}. The resolution of ${\mathcal I}_X$ is given by Lemma \ref{lem:liaison}(ii), and $Z\subset\p^7$ is a singular c.i. $(2,4)$ by Example \ref{ex:c.i} and Proposition \ref{prop:c.i}. So we get (II). If $X\subset\p^5$ has invariants as in Proposition \ref{prop:(5,1)}(v) then it is linked $(5,5)$ to a threefold of degree $d'=9$ and sectional genus $g'=10$. Then $X'\subset\p^5$ is a c.i. $(3,3)$ by \cite{b-s-s}. The resolution of ${\mathcal I}_X$ is given by Lemma \ref{lem:liaison}(ii), and $Z\subset\p^7$ is a singular c.i. $(3,3)$ by Example \ref{ex:c.i} and Proposition \ref{prop:c.i}. This gives (III). If the invariants of $X\subset\p^5$ are given by Proposition \ref{prop:(5,1)}(vi) then it is linked $(5,5)$ to a threefold $X'\subset\p^5$ of degree $d'=10$ and sectional genus $g'=12$. Therefore ${\mathcal I}_{X'}$ is given by a resolution $0\to\O_{\p^5}(-5)\oplus\O_{\p^5}(-6)\to\O_{\p^5}(-3)\oplus\O_{\p^5}(-4)^{\oplus 2}\to {\mathcal I}_{X'}\to 0$ by \cite[Theorem 5.1]{b-s-s}. So we get the resolution of ${\mathcal I}_X$ by Lemma \ref{lem:liaison}(ii), and $Z\subset\p^8$ is a singular c.i. $(2,2,3)$ by Example \ref{ex:c.i} and Proposition \ref{prop:c.i}. This gives (IV). Let $X\subset\p^5$ be with numerical invariants as in Proposition \ref{prop:(5,1)}(vii), so it is linked $(5,5)$ to a threefold $X'\subset\p^5$ of degree $d'=11$ and sectional genus $g'=14$. It follows from \cite{b-s-s2} and \cite[Example 5.8]{d-p} that $X'\subset\p^5$ is linked $(4,4)$ to a quintic threefold, so ${\mathcal I}_{X'}$ is given by a resolution $0\to\O_{\p^5}(-5)^{\oplus 2}\oplus\O_{\p^5}(-6)\to\O_{\p^5}(-4)^{\oplus 4}\to {\mathcal I}_{X'}\to 0$. Then the resolution of ${\mathcal I}_X$ follows from Lemma \ref{lem:liaison}(ii) so $Z\subset\p^9$ is a singular c.i. $(2,2,2,2)$ by Example \ref{ex:c.i} and Proposition \ref{prop:c.i}, giving (V). Finally, if the invariants of $X\subset\p^5$ are those of Proposition \ref{prop:(5,1)}(ix) then $(K_X+H_X)^3=0$. So it is a conic bundle and the resolution of ${\mathcal I}_X$ is given by \cite[Propositions 3.4 and 5.9]{b-o-s-s2}, respectively. To get a contradiction, assume that $Z\subset\p^9$ is smooth. Then $Y\subset Z$ has codimension $2$ by Proposition \ref{prop:num n=3}(v) and, on the other hand, the formula $e=z+11d-2g-123$ given in Poposition \ref{prop:(5,1)} yields $e=0$, contradicting Remark \ref{rem:e} and giving (VI).
\end{proof}

\begin{remark}\label{rem:sing}
In all the cases where $Z$ turns out to be singular throughout the paper, the map $\tau:W\to Z$ is a divisorial contraction. Hence, in all these cases, $Z$ is a normal variety with only $\q$-factorial and terminal singularities. As Lemma \ref{lem:(5,1)}(VI) shows, Propositions \ref{prop:K} and \ref{prop:sec} are no longer true in this setting. In this case, $H=H_Z-\frac{1}{2}E_Z$, $K_W=-2H_Z+\frac{1}{2}E_Z$ and $\Sec_5(X)\subset\p^5$ is a hypersurface of degree $8$. This shows that the classification of special birational transformations of $\p^r$ is more complicated if one allows (even mild) singularities on $Z$.
\end{remark}

The proof of Theorem \ref{thm:n=3 r=5} uses the classification of threefolds $X\subset\p^5$ of low degree. This classification is complete up to degree $11$ (see \cite{b-s-s2} and \cite[Remark 5.9]{d-p}). For degree $12$, the possible numerical invariants of $X\subset\p^5$ are given in \cite{ede} but uniqueness is not known (see \cite[p. 394]{ede} and cf. Remark \ref{rem:linked}). So we provide a direct argument in the following two cases:

\begin{lemma}\label{lemma:12}
Let $X\subset\p^5$ be as in Proposition \ref{prop:(5,b)}(iv). Then ${\mathcal I}_X$ is given by a resolution
$0\to\O_{\p^5}(-5)^{\oplus 3}\oplus\O_{\p^5}(-6)\to\Omega_{\p^5}(-3)\to {\mathcal I}_X\to 0$.
\end{lemma}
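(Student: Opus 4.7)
The strategy is the Beilinson spectral sequence on $\p^5$: for any coherent sheaf $\F$ one has
$$E_1^{p,q} = H^q(\p^5, \F\otimes\Omega^{-p}(-p))\otimes\O_{\p^5}(p), \quad -5\le p\le 0,$$
converging to $\F$ concentrated in total degree $0$. Applied to $\F=\mathcal I_X$, the plan is to show that the $E_1$ page collapses into a two-term complex realizing exactly the claimed resolution. The first step is to compute the Hilbert polynomial $\chi(\mathcal I_X(k))$ from the numerical invariants $d=12$, $g=16$, $\chi(\O_S)=9$, $\chi(\O_X)=0$ of Proposition \ref{prop:(5,b)}(iv) by Hirzebruch--Riemann--Roch on $X$ combined with the structure sequence $0\to\mathcal I_X\to\O_{\p^5}\to\O_X\to 0$.

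The second and crucial step is to pin down each dimension $h^q(\mathcal I_X\otimes\Omega^p(p))$ for $0\le p\le 5$. Since $\Phi$ has type $(5,2)$, the ideal $\mathcal I_X$ is generated by quintics, hence $h^0(\mathcal I_X(k))=0$ for $k\le 4$, while $h^0(\mathcal I_X(5))=\alpha+1$ is dictated by linear normality of $Z\subset\p^\alpha$, and $z=14$ together with Proposition \ref{prop:ff} fixes $\alpha$. The remaining unknowns $h^1$, $h^2$, $h^3$ of low twists will be handled by slicing with a general hyperplane to $S\subset\p^4$ and $C\subset\p^3$ (whose Riemann--Roch is controlled by $d$, $g$, $\chi(\O_S)$), by Serre duality, and by tensoring $\mathcal I_X$ into the Euler and Koszul complexes that resolve each $\Omega^p(p)$. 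Bott's formulas on $\p^5$ then convert the cohomology of those twists into the cohomology of $\mathcal I_X(\cdot)$ computed above.

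The third step is to write out the $E_1$ table: the expectation is that only the entries producing $\O(-5)^{\oplus 3}$ and $\O(-6)$ (from $h^0$ contributions) and $\Omega(-3)$ (from a single $h^1$ contribution at $p=-3$) survive, the rest vanishing. Degeneration at $E_2$ then collapses the spectral sequence into the desired short exact sequence. The main obstacle is step two: without an explicit model of $X$, securing the sharp vanishings of $H^q(\mathcal I_X\otimes\Omega^p(p))$ in all the off-diagonal positions is delicate. If the direct cohomological bookkeeping is intractable, the fallback is a liaison argument: link $X$ in a complete intersection of two quintics (permitted by Lemma \ref{lem:smooth liaison} since $\mathcal I_X(5)$ is globally generated) to a threefold whose resolution is already known from \cite{b-s-s2} or \cite{ede}, and transfer the resolution back via Lemma \ref{lem:liaison}(ii).
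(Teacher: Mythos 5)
Your overall strategy -- Beilinson on $\p^5$ applied to a suitable twist of ${\mathcal I}_X$ -- is exactly the route the paper takes, but what you have written is a plan whose hardest step is deferred, and both of your escape hatches fail. First, the input $h^0({\mathcal I}_X(4))=0$ does not follow from ``${\mathcal I}_X$ is generated by quintics'': scheme-theoretic cut-out by quintics does not preclude a quartic hypersurface through $X$. The paper gets this vanishing from Proposition \ref{prop:sec}(i): any quartic containing $X$ would contain every $5$-secant line, hence all of $\Sec_5(X)$, which is a hypersurface of degree $ab-1=9$; this is precisely the point flagged in Remark \ref{rem:linked}. Second, your fallback liaison is a dead end: linking $X$ (with $d=12$, $g=16$) in a c.i.\ $(5,5)$ produces a threefold of degree $13$ and sectional genus $19$ -- exactly the variety of Proposition \ref{prop:(5,1)}(viii) treated in Lemma \ref{lemma:13} -- and the classification/uniqueness of threefolds in $\p^5$ of degrees $12$ and $13$ is exactly what is \emph{not} available in \cite{ede}, so there is no ``already known'' resolution to transfer back. (Remark \ref{rem:linked} states explicitly that neither of Lemmas \ref{lemma:12} and \ref{lemma:13} can be deduced from the other this way.)

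The step you label ``delicate'' is where all the content lies, and the paper resolves it not by Bott-formula bookkeeping on ${\mathcal I}_X\otimes\Omega^p(p)$ but by descending to the curve section: $C\subset\p^3$ lies on two quartics ($h^0({\mathcal I}_C(4))\geq 35-33=2$), is linked $(4,4)$ to a quartic rational curve $C'$ of type $(3,1)$ on a quadric, and Lemma \ref{lem:liaison}(ii) then gives an explicit resolution of ${\mathcal I}_C$, hence its cohomology. Climbing back up via the restriction sequences, Barth's theorem (regularity of $X$), Kodaira vanishing, Serre duality and Riemann--Roch fills in the tables for ${\mathcal I}_S(4)$ and ${\mathcal I}_X(4)$. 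One more technical caveat: the version of Beilinson you quote, $E_1^{p,q}=H^q({\mathcal F}\otimes\Omega^{-p}(-p))\otimes\O(p)$, only produces direct sums of line bundles, so it cannot output the summand $\Omega_{\p^5}(-3)$ you expect; you need the dual form $E_1^{p,q}=H^q({\mathcal F}(p))\otimes\Omega^{-p}(-p)$ applied to ${\mathcal I}_X(4)$, which first yields $0\to\O_{\p^5}(-1)^{\oplus 9}\to T_{\p^5}(-2)\oplus\Omega_{\p^5}(1)\to{\mathcal I}_X(4)\to 0$, and then an Euler-sequence cancellation of the $T_{\p^5}(-2)$ summand to reach the stated resolution. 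So the skeleton is right, but the proof is not yet there.
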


\begin{proof}
Let $S\subset\p^4$ be a general hyperplane section of $X\subset\p^5$, and let $C\subset\p^3$ be a general hyperplane section of $S\subset\p^4$. Since $h^0(\O_{\p^3}(4))=35$ and $h^0(\O_C(4))=33$ we deduce $h^0({\mathcal I}_C(4))\geq 2$, i.e. $C$ is contained in two quartic surfaces of $\p^3$. Let $C'$ denote the residual scheme of degree $d'=4$. We deduce from Lemma \ref{lem:liaison}(i) that $p_a(C')=0$. Therefore $h^0({\mathcal I}_{C'}(2))=1$, and hence $C'$ is a divisor of type $(3,1)$ in a smooth quadric of $\p^3$. So ${\mathcal I}_{C'}$ is given by a resolution $0\to T_{\p^3}(-5)\to\O_{\p^3}(-3)^{\oplus 3}\oplus\O_{\p^3}(-2)\to {\mathcal I}_{C'}\to 0$, and we deduce from Lemma \ref{lem:liaison}(ii) that ${\mathcal I}_C$ is given by a resolution $0\to\O_{\p^3}(-5)^{\oplus3}\oplus\O_{\p^3}(-6)\to\Omega_{\p^3}(-3)\oplus\O_{\p^3}(-4)^{\oplus 2}\to {\mathcal I}_C\to 0$.

Let us compute the Beilinson cohomology table of ${\mathcal I}_S(4)$, i.e. the $5\times5$ matrix with entries $h^{4-i}({\mathcal I}_S(j))$. As $X\subset\p^5$ is regular by Barth's theorem, $S\subset\p^4$ is regular by Kodaira vanishing. Then $h^1(\O_S)=0$, and hence $h^2(\O_S)=8$ by Riemann-Roch. Note that Serre's duality yields $h^2(\O_S(2))=h^0(K_S-2H_S)=0$, since $(K_S-2H_S)H_S=2g-2-3d=-6$. Therefore $h^3({\mathcal I}_S(2))=h^2(\O_S(2))=0$, and hence we deduce from the exact sequence $0\to {\mathcal I}_S(k-1)\to {\mathcal I}_S(k)\to {\mathcal I}_C(k)\to 0$ that the Beilinson cohomology table of ${\mathcal I}_S(4)$ is given by
\[
\begin{array}{ccccc}
0 & 0 & 0 & 0 & 0 \\
8 & 1 & 0 & 0 & 0 \\
0 & 0 & 0 & 0 & 0 \\
0 & 0 & 0 & 1 & s \\
0 & 0 & 0 & 0 & s+1 \\
\end{array}
\]
Now let us compute the Beilinson cohomology table of ${\mathcal I}_X(4)$, i.e. the $6\times6$ matrix with entries $h^{5-i}({\mathcal I}_X(j-1))$. The Kodaira vanishing theorem yields $h^i(\O_X(-1))=0$ for $i\in\{0,1,2\}$, and hence $h^3(\O_X(-1))=9$ by Riemann-Roch. Furthermore, $h^3(\O_X(1))=h^0(K_X-H_X)=0$ since $(K_X-H_X)H^2_X=2g-2-3d=-6$. Therefore $h^4({\mathcal I}_X(1))=h^3(\O_X(1))=0$, and hence we deduce from the exact sequence $0\to {\mathcal I}_X(k-1)\to {\mathcal I}_X(k)\to {\mathcal I}_S(k)\to 0$ that the Beilinson cohomology table of ${\mathcal I}_X(4)$ is given by
\[
\begin{array}{cccccc}
0 & 0 & 0 & 0 & 0 & 0 \\
9 & 1 & 0 & 0 & 0 & 0 \\
0 & 0 & 0 & 0 & 0 & 0 \\
0 & 0 & 0 & 0 & 0 & 0 \\
0 & 0 & 0 & 0 & 1 & 0 \\
0 & 0 & 0 & 0 & 0 & 0 \\
\end{array}
\]
as $h^0({\mathcal I}_X(4))=0$ by Proposition \ref{prop:sec}(i). Consequently, ${\mathcal I}_X(4)$ is given by a resolution $0\to\O_{\p^5}(-1)^{\oplus 9}\to T_{\p^5}(-2)\oplus\Omega_{\p^5}(1)\to {\mathcal I}_X(4)\to 0$ by Beilinson's theorem \cite{bei}. Finally, using Euler's sequence $0\to\O_{\p^5}(-2)\to\O_{\p^5}(-1)^{\oplus 6}\to T_{\p^5}(-2)\to 0$, we get the diagram
\[
\xymatrix
{
         & 0 \ar[d]                                        & 0 \ar[d]                                       &                                     &   \\
0 \ar[r] & \O_{\p^5}(-1)^{\oplus 3}\oplus\O_{\p^5}(-2) \ar[r]\ar[d] & \Omega_{\p^5}(1)\ar[r]\ar[d]          & {\mathcal I}_X(4)\ar[r]\ar@{=}[d]              & 0 \\
0 \ar[r] & \O_{\p^5}(-1)^{\oplus 9}\ar[r]\ar[d]            & T_{\p^5}(-2)\oplus\Omega_{\p^5}(1)\ar[r]\ar[d] & {\mathcal I}_X(4)\ar[r]                        & 0 \\
         & T_{\p^5}(-2)\ar[d]\ar@{=}[r]                    & T_{\p^5}(-2)\ar[d]                             &                                     &   \\
         & 0                                               & 0                                              &                                     &   \\
}
\]
and the first line gives the resolution of the statement.
\end{proof}

\begin{lemma}\label{lemma:13}
Let $X\subset\p^5$ be as in Proposition \ref{prop:(5,1)}(viii). Then ${\mathcal I}_X$ is given by a resolution
$0\to\O_{\p^5}(-5)^{\oplus 10}\to\Omega^3_{\p^5}(-1)\oplus\O_{\p^5}(-4)\to {\mathcal I}_X\to 0$.
\end{lemma}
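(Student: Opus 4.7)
The plan is to mirror Lemma \ref{lemma:12}: compute the Beilinson cohomology table of $\mathcal{I}_X$ by climbing from a general curve section up to $X$, apply Beilinson's theorem to extract a two-term resolution, and then close with a diagram chase that collapses the result into the stated form.

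First I would analyse a general curve section $C\subset\p^3$, of degree $13$ and arithmetic genus $19$. Riemann--Roch with Kodaira vanishing gives $h^0(\O_C(4))=34$, hence $h^0(\mathcal{I}_C(4))\geq 1$; on the other hand a $(4,4)$ link would leave a residual of negative arithmetic genus, so in fact $h^0(\mathcal{I}_C(4))=1$ and $C$ lies on a unique quartic $Q$. Since $\mathcal{I}_X(5)$ is globally generated (because $a=5$), so is $\mathcal{I}_C(5)$, and a generic quintic through $C$ not containing $Q$ cuts out $C\cup C'$ on $Q$, with $C'$ of degree $7$ and arithmetic genus $4$ by Lemma \ref{lem:liaison}(i). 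Identifying $C'$ (the candidate is a $(2,5)$-divisor on a smooth quadric, in line with the classification of curves of degree $7$ in $\p^3$) gives a resolution of $\mathcal{I}_{C'}$, and Lemma \ref{lem:liaison}(ii) transports it to a minimal free resolution of $\mathcal{I}_C$ and hence pins down all $h^i(\mathcal{I}_C(k))$ in the relevant range.

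Next I would climb up via the restriction sequences
\[
0\to\mathcal{I}_S(k-1)\to\mathcal{I}_S(k)\to\mathcal{I}_C(k)\to 0,\qquad
0\to\mathcal{I}_X(k-1)\to\mathcal{I}_X(k)\to\mathcal{I}_S(k)\to 0,
\]
using Barth's theorem, Kodaira vanishing, Serre duality and Riemann--Roch applied at the invariants $(d,g,\chi(\O_S),\chi(\O_X))=(13,19,11,1)$, in order to fill in the full $6\times 6$ matrix $h^{5-i}(\mathcal{I}_X(j-1))$. The resulting Beilinson cohomology table will turn out to be sparse enough that Beilinson's theorem produces a two-term resolution $0\to F_1\to F_0\to\mathcal{I}_X\to 0$ whose summands are $\Omega^p_{\p^5}(-p)$'s and line bundles dictated by the nonzero entries of the table, with the column of $10$'s and the two isolated $1$'s that force exactly the shapes $\O_{\p^5}(-5)^{\oplus 10}$, $\Omega^3_{\p^5}(-1)$ and $\O_{\p^5}(-4)$.

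A final diagram chase, using the Koszul-type sequences $0\to\Omega^p(p)\to\O_{\p^5}^{\oplus\binom{6}{p}}\to\Omega^{p-1}(p)\to 0$ to absorb redundant trivial summands exactly as in the closing step of Lemma \ref{lemma:12}, then rewrites this resolution in the claimed form. The main obstacle is the curve step: the residual $(7,4)$ curve $C'$ is not arithmetically Cohen--Macaulay (a direct count already gives $h^1(\mathcal{I}_{C'}(2))\neq 0$), so its Hartshorne--Rao module enters the picture and the liaison is genuinely more delicate than the $(3,1)$-divisor-on-a-quadric appearing in Lemma \ref{lemma:12}. Carefully describing $C'$ and its free resolution, then propagating the cohomology cleanly through $S$ up to $X$, is where the bulk of the technical work will lie; once that table is in hand, the Beilinson extraction and the final diagram chase are routine.
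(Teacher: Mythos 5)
Your overall architecture is the right one and matches the paper's: descend to a general curve section $C\subset\p^3$, identify the residual curve in a liaison, transport its resolution to ${\mathcal I}_C$, climb back up through $S$ to $X$ via the restriction sequences, and finish with Beilinson plus a diagram chase. But the key geometric step is wrong. The residual curve $C'\subset\p^3$ of degree $7$ and arithmetic genus $4$ cannot be a $(2,5)$-divisor on a smooth quadric $Q$: every quartic containing such a curve contains $Q$ (since ${\mathcal I}_{C'}(4)|_Q\cong\O_Q(2,-1)$ has no sections), so the quartic in the linking complete intersection would split as $Q\cup Q'$, forcing the irreducible curve $C$ to lie on a quadric --- impossible, because a curve of degree $13$ and genus $19$ on a smooth quadric would require integer solutions of $a+b=13$, $(a-1)(b-1)=19$, and there are none. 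What the paper does instead is link $C'$ one step further: the count $h^0(\O_{\p^3}(3))=20$, $h^0(\O_{C'}(3))=18$ gives $h^0({\mathcal I}_{C'}(3))\geq 2$, and the residual of $C'$ in a c.i.\ $(3,3)$ has degree $2$ and arithmetic genus $-1$, i.e.\ is two skew lines; this pins down ${\mathcal I}_{C'}$ (a resolution $0\to T_{\p^3}(-6)\to\O_{\p^3}(-4)^{\oplus2}\oplus\O_{\p^3}(-3)^{\oplus2}\to{\mathcal I}_{C'}\to0$) and then ${\mathcal I}_C$ via Lemma \ref{lem:liaison}(ii). The two candidate curves have different Hartshorne--Rao modules (one-dimensional versus four-dimensional), so with your $C'$ the cohomology of ${\mathcal I}_C$, hence the Beilinson table of ${\mathcal I}_X$, would come out wrong and the stated resolution would not emerge.

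There are two further gaps. First, your argument that $h^0({\mathcal I}_C(4))=1$ because ``a $(4,4)$ link would leave a residual of negative arithmetic genus'' does not work: residual schemes in liaison need not be connected, and degree-$3$ curves of arithmetic genus $-1$ (a conic disjoint from a line, say) certainly exist. Second, the Beilinson table of ${\mathcal I}_X(4)$ contains an a priori unknown entry $x=h^4({\mathcal I}_X)=h^0(K_X)$, and proving $x=0$ is a genuinely nontrivial step that your plan passes over by asserting the table ``will turn out to be sparse enough'': the paper rules out $x\neq0$ by noting that $|K_X+H_X|$ would then give a birational map onto a threefold of degree $(K_X+H_X)^3=14$ in $\p^9$, contradicting the bound of \cite[Corollary 2.23]{g-h}. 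Without these three points the proof does not close.
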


\begin{proof}
We argue as in Lemma \ref{lemma:12}. Since $X$ is contained in the quartic hypersurface $\Sec_5(X)\subset\p^5$ (see Remark \ref{rem:sec} and Proposition \ref{prop:sec}(i)), the curve $C\subset\p^3$ is linked $(4,5)$ to a curve $C'\subset\p^3$ of degree $d'=7$ and arithmetic genus $p_a(C')=4$ by Lemma \ref{lem:liaison}(i). Then $h^0({\mathcal I}_{C'}(3))\geq 2$, as $h^0(\O_{\p^3}(3))=20$ and $h^0(\O_{C'}(3))=18$. Let $C''\subset\p^3$ denote its residual scheme in a c.i. $(3,3)$. Then $d''=2$ and $p_a(C'')=-1$, so $C''\subset\p^3$ consists of two skew lines. Therefore, ${\mathcal I}_{C'}$ is given by a resolution $0\to T_{\p^3}(-6)\to\O_{\p^3}(-4)^{\oplus 2}\oplus\O_{\p^3}(-3)^{\oplus 2}\to {\mathcal I}_{C'}\to 0$ and ${\mathcal I}_C$ is given by a resolution $0\to\O_{\p^3}(-5)\oplus\O_{\p^3}(-6)^{\oplus 2}\to\Omega_{\p^3}(-3)\oplus\O_{\p^3}(-4)\to {\mathcal I}_C\to 0$ by Lemma \ref{lem:liaison}(ii).

Let us compute the Beilinson cohomology table of ${\mathcal I}_S(4)$. Since $S\subset\p^4$ is regular we get $h^1(\O_S)=0$, and hence $h^2(\O_S)=10$ by Riemann-Roch. Note that $h^2(\O_S(2))=h^0(K_S-2H_S)=0$, since $(K_S-2H_S)H_S=-3$. Hence $h^3({\mathcal I}_S(2))=h^2(\O_S(2))=0$. Furthermore, $h^1({\mathcal I}_C(j))=h^2({\mathcal I}_C(j))=0$ for every integer $j\geq 4$ so the Serre vanishing theorem yields $h^2({\mathcal I}_S(3))=0$, and hence we deduce from the exact sequence $0\to {\mathcal I}_S(k-1)\to {\mathcal I}_S(k)\to {\mathcal I}_C(k)\to 0$ that the Beilinson cohomology table of ${\mathcal I}_S(4)$ is given by
\[
\begin{array}{ccccc}
0 & 0 & 0 & 0 & 0 \\
10& s+1 & 0 & 0 & 0 \\
0 & s & 1 & 0 & 0 \\
0 & 0 & 0 & 0 & 0 \\
0 & 0 & 0 & 0 & 1 \\
\end{array}
\]

Now let us compute the Beilinson cohomology table of ${\mathcal I}_X(4)$. The Kodaira vanishing theorem yields $h^i(\O_X(-1))=0$ for $i\in\{0,1,2\}$, and hence $h^3(\O_X(-1))=10$ by Riemann-Roch. On the other hand, $h^3(\O_X(1))=h^0(K_X-H_X)=0$ as $(K_X-H_X)H_X^2=-3$. So $h^4({\mathcal I}_X(1))=0$. Furthermore $h^1({\mathcal I}_S(j))=h^2({\mathcal I}_S(j))=0$ for every integer $j\geq 3$, so the Serre vanishing theorem yields $h^2({\mathcal I}_X(2))=0$ and hence we deduce from the exact sequence $0\to {\mathcal I}_X(k-1)\to {\mathcal I}_X(k)\to {\mathcal I}_S(k)\to 0$ that the Beilinson cohomology table of ${\mathcal I}_X(4)$ is given by
\[
\begin{array}{cccccc}
0 & 0 & 0 & 0 & 0 & 0 \\
10& x & 0 & 0 & 0 & 0 \\
0 & x & 1 & 0 & 0 & 0 \\
0 & 0 & 0 & 0 & 0 & 0 \\
0 & 0 & 0 & 0 & 0 & 0 \\
0 & 0 & 0 & 0 & 0 & 1 \\
\end{array}
\]
Let us show that $x=0$. Assume to the contrary that $x=h^4({\mathcal I}_X)=h^3(\O_X)=h^0(K_X)\neq 0$. Then $|K_X+H_X|$ gives a birational map $\varphi_{|K_X+H_X|}:X\da\p^9$ onto its image. Since $\deg(\varphi(X))=(K_X+H_X)^3=14$, we get a contradiction by \cite[Corollary 2.23]{g-h}. For a similar proof, see the proof of \cite[Proposition 4.1]{ede} (and hence also \cite[Lemma 4.2]{b-s-s2}) having in mind that $X\subset\p^5$ coincides with its first reduction by \cite[Theorem 4.4.1]{b-s-s2}. Consequently, ${\mathcal I}_X(4)$ is given by a resolution $0\to\O_{\p^5}(-1)^{\oplus 10}\to\Omega^3_{\p^5}(3)\oplus\O_{\p^5}\to {\mathcal I}_X(4)\to 0$.
\end{proof}

\begin{remark}\label{rem:linked}
The threefolds of Lemmas \ref{lemma:12} and \ref{lemma:13} are linked by a c.i. $(5,5)$ by Lemma \ref{lem:liaison}(ii) and the end of Example \ref{ex:degree21}. But we cannot deduce Lemma \ref{lemma:13} from Lemma \ref{lemma:12} (and viceversa) since the uniqueness of threefolds with these numerical invariants was left open in \cite{ede}. Moreover, note that in Lemma \ref{lemma:12} we use the fact $h^0({\mathcal I}_X(4))=0$ coming from Proposition \ref{prop:sec}(i).
\end{remark}

We are now in position to prove Theorem \ref{thm:n=3 r=5}.

\begin{theorem}\label{thm:n=3 r=5}
Let $\Phi:\p^5\da Z$ be a special birational transformation of type
$(a,b)$ and $n=3$. Then one of the following holds:
\begin{enumerate}
\item[(I)] $(a,b)=(5,5)$, $Z=\p^5$ and ${\mathcal I}_X$ is given by a resolution $$0\to\O_{\p^5}(-6)^{\oplus 5}\to\O_{\p^5}(-5)^{\oplus 6}\to {\mathcal I}_X\to 0;$$
\item[(II)] $(a,b)=(5,2)$, $Z\subset\p^{11}$ is a linear section of $\g(1,5)\subset\p^{14}$ and ${\mathcal I}_X$ is given by a resolution $$0\to\O_{\p^5}(-5)^{\oplus 3}\oplus\O_{\p^5}(-6)\to\Omega_{\p^5}(-3)\to {\mathcal I}_X\to 0;$$
\item[(III)] $(a,b)=(5,1)$, $Z\subset\p^{10}$ is a new Fano manifold of degree $21$ and coindex $4$, and ${\mathcal I}_X$ is given by a resolution $$0\to\O_{\p^5}(-5)^{\oplus 10}\to\Omega^3_{\p^5}(-1)\oplus\O_{\p^5}(-4)\to {\mathcal I}_X\to 0;$$
\item[(IV)] $(a,b)=(5,1)$, $Z\subset\p^{15}$ is a linear section of $\g(1,6)\subset\p^{20}$ and ${\mathcal I}_X$ is given by a resolution $$0\to\O_{\p^5}(-5)^{\oplus 5}\to\Omega_{\p^5}(-3)\oplus\O_{\p^5}(-4)\to {\mathcal I}_X\to 0;$$
\item[(V)] $(a,b)=(4,1)$, $Z\subset\p^8$ is a c.i. $(2,2,2)$ and ${\mathcal I}_X$ is given by a resolution $$0\to\O_{\p^5}(-5)^{\oplus 3}\to\O_{\p^5}(-3)\oplus\O_{\p^5}(-4)^{\oplus 3}\to {\mathcal I}_X\to 0;$$
\item[(VI)] $(a,b)=(3,1)$, $Z\subset\p^7$ is a c.i. $(2,2)$ and ${\mathcal I}_X$ is given by a resolution $$0\to\O_{\p^5}(-4)^{\oplus 2}\to\O_{\p^5}(-2)\oplus\O_{\p^5}(-3)^{\oplus 2}\to {\mathcal I}_X\to 0;$$
\item[(VII)] $(a,b)=(2,1)$, $X\subset\p^4$ is a quadric hypersurface and $Z\subset\p^6$ is a quadric hypersurface.
\end{enumerate}
\end{theorem}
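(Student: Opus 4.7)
The plan is to proceed case by case according to Proposition \ref{prop:num n=3} restricted to $r=5$, i.e.~cases (v)--(viii) of that proposition, using the three maximal lists already obtained (Propositions \ref{prop:(4,b)}, \ref{prop:(5,b)} and \ref{prop:(5,1)}) together with the corresponding screening lemmas (Lemmas \ref{lem:(4,b)}, \ref{lem:(5,b)} and \ref{lem:(5,1)}). Cases (vii) and (viii) of Proposition \ref{prop:num n=3} dispose of $(a,b)=(3,1)$ and $(2,1)$ immediately by the general results of Theorems \ref{thm:m=1} and \ref{thm:m=0}, giving exactly cases (VI) and (VII) of the statement.

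For the remaining cases I would run through the surviving entries of the three maximal lists. After Lemma \ref{lem:(4,b)} only Proposition \ref{prop:(4,b)}(iii) is left (giving type $(V)$: the smooth threefold linked $(3,4)$ to a cubic scroll, handled via Example \ref{ex:c.i} and Proposition \ref{prop:c.i}). After Lemma \ref{lem:(5,b)} only (i) and (iv) of Proposition \ref{prop:(5,b)} survive: case (i) is a degree $15$ threefold classified directly and producing type $(I)$, while case (iv) requires Lemma \ref{lemma:12} to produce the Beilinson-type resolution and thus type $(II)$ via Example \ref{ex:g(1,r)}. After Lemma \ref{lem:(5,1)} only cases (viii) and (x) of Proposition \ref{prop:(5,1)} remain; these produce types $(III)$ and $(IV)$, where (III) is obtained by applying Lemma \ref{lemma:13} and matching with the new example in Example \ref{ex:degree21}, and (IV) is obtained from the classification \cite{b-s-s2}, \cite[Remark 5.9]{d-p} of threefolds of degree $11$ in $\p^5$ and matching with Example \ref{ex:g(1,r+1)} (noting that $\deg\g(1,6)=42=z$).

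In each surviving case, once $X\subset\p^5$ is pinned down by a minimal free resolution of $\I_X$, the smooth variety $Z$ is identified by comparing $\Phi$ with the explicit series of special birational transformations constructed in Section \ref{section:ex}; the match of numerical invariants $(a,b,n,d,g,z,i)$ guarantees the identification, and smoothness of $Z$ is inherited from the construction there. The codimension two classification of manifolds with prescribed resolution in $\p^5$, together with liaison via Lemmas \ref{lem:liaison} and \ref{lem:smooth liaison}, handles those entries where $X$ is linked to a well-known manifold (typically a complete intersection, a scroll, or a projection of one).

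The main obstacle is case (III) (Proposition \ref{prop:(5,1)}(viii), $d=13$, $g=19$). Here the classification of threefolds in $\p^5$ of degree $12$--$13$ is not uniquely pinned down in the literature, so uniqueness of $X$ has to be extracted by computing the Beilinson cohomology table of $\I_X(4)$ directly (Lemma \ref{lemma:13}); the crucial, and subtle, vanishing $h^3(\mathcal O_X)=0$ must be established (e.g.\ by ruling out a birational model in $\p^9$ of a threefold of degree $14$ via \cite[Corollary 2.23]{g-h}), and then the resulting resolution has to be matched with the new Fano fivefold of degree $21$ and coindex $4$ from Example \ref{ex:degree21}. A minor additional difficulty is that in case (IV) one has to verify, using $N_{6,4}=0$ and Lemma \ref{lem:6-secant}, that the unique degree $11$ threefold producing the resolution indeed matches a linear section of $\g(1,6)$; this is where the comparison with Example \ref{ex:g(1,r+1)} closes the argument.
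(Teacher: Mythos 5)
Your proposal follows essentially the same route as the paper: reduce to cases (v)--(viii) of Proposition \ref{prop:num n=3}, run through the surviving entries of the maximal lists after the screening Lemmas \ref{lem:(4,b)}, \ref{lem:(5,b)} and \ref{lem:(5,1)}, pin down $\I_X$ via liaison and the Beilinson computations of Lemmas \ref{lemma:12} and \ref{lemma:13}, and identify $Z$ by matching with the constructions of Section \ref{section:ex} (the paper handles the degree-$15$ case of type (I) by linking $(5,5)$ to a degree-$10$ threefold rather than ``directly'', but this is the liaison step you already describe). You also correctly isolate the genuinely delicate points, namely the vanishing $h^3(\O_X)=0$ in Lemma \ref{lemma:13} and the identification with the new coindex-$4$ Fano of Example \ref{ex:degree21}.
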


\begin{proof}
We proceed in view of Proposition \ref{prop:num n=3}, cases (v)-(viii). According to Lemmas \ref{lem:(5,b)}, \ref{lem:(5,1)} and \ref{lem:(4,b)}, we first analyze the remaining cases in Proposition \ref{prop:(5,b)}, \ref{prop:(5,1)} and \ref{prop:(4,b)}, respectively. Let $X\subset\p^5$ be with numerical invariants as in Proposition \ref{prop:(5,b)}(i). Then it is linked $(5,5)$ to a threefold $X'\subset\p^5$ of degree $d'=10$ and sectional genus $g'=11$ by Lemmas \ref{lem:smooth liaison} and \ref{lem:liaison}(i). So ${\mathcal I}_{X'}$ is given by a resolution $0\to\O_{\p^5}(-5)^{\oplus 4}\to\O_{\p^5}(-4)^{\oplus 5}\to {\mathcal I}_{X'}\to 0$ by \cite[Theorem 5.1.4]{b-s-s} and the resolution of ${\mathcal I}_X$ follows from Lemma \ref{lem:liaison}(ii), giving (I). In this case, we obtain a well-known Cremona transformation (see Remark \ref{rem:cremona}(ii)). Assume now that the invariants of $X\subset\p^5$ are given in Proposition \ref{prop:(5,b)}(iv). The resolution of ${\mathcal I}_X$ is given by Lemma \ref{lemma:12}, and $Z\subset\p^{11}$ is a linear section of $\g(1,5)\subset\p^{14}$ (see Example \ref{ex:g(1,r)}). This gives (II). If the invariants of $X\subset\p^5$ are those of Proposition \ref{prop:(5,1)}(viii), the resolution of ${\mathcal I}_X$ follows from Lemma \ref{lemma:13} and $Z\subset\p^{10}$ is a new Fano manifold of coindex $4$ (see Example \ref{ex:degree21}). This gives (III). If $X\subset\p^5$ has invariants as in Proposition \ref{prop:(5,1)}(x), then the resolution of ${\mathcal I}_X$ is given by \cite{b-s-s2} and \cite[Remark 5.9]{d-p}. In that case, $Z\subset\p^{15}$ is a linear section of $\g(1,6)\subset\p^{20}$ (see Example \ref{ex:g(1,r+1)}) and we get (IV). If the invariants of $X\subset\p^5$ are given in Proposition \ref{prop:(4,b)}(iii) then the resolution of ${\mathcal I}_X$ comes from \cite[Theorem 5.1.3]{b-s-s}. Hence $Z\subset\p^8$ is a c.i. $(2,2,2)$ by Example \ref{ex:c.i} and Proposition \ref{prop:c.i}, giving (V). On the other hand, cases (VI)-(VII) follow from Theorem \ref{thm:n=3 easy}.
\end{proof}

\begin{corollary}\label{cor:r=n+2}
Let $\Phi:\p^r\da Z$ be a special birational transformation of type $(a,b)$. If $r=n+2$ then either $r\in\{3,4,5\}$ and $X\subset\p^r$ and $Z$ are given in Theorem \ref{thm:n=1}, cases (IV)-(VII), Theorem \ref{thm:n=2}, cases (VI)-(XII), and Theorem \ref{thm:n=3 r=5}, cases (I)-(VI), or else $X\subset\p^{r-1}$ is a quadric hypersurface and $Z\subset\p^{r+1}$ is a quadric hypersurface.
\end{corollary}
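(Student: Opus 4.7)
The plan is to reduce the statement to the theorems already proved in the paper, using Proposition \ref{prop:num} as the organizing tool. Since $r = n+2$ gives $r - n - 1 = 1$, part (iii) of that proposition forces $a = m+2$, so the analysis splits naturally according to $m$.

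If $m = 0$ then $a = 2$, and Theorem \ref{thm:m=0} immediately produces the ``or else'' clause of the corollary: $X \subset \p^{r-1}$ is a quadric hypersurface and $Z \subset \p^{r+1}$ is a quadric hypersurface, valid for every $r \geq 3$. If $m = 1$ then $a = 3$, and Theorem \ref{thm:m=1} already establishes that $r \in \{3,4,5\}$ and identifies $(X,Z)$ as one of: the cases (IV)--(VII) of Theorem \ref{thm:n=1} (with $r=3$), the quintic elliptic scroll in $\p^4$ appearing as case (XI) of Theorem \ref{thm:n=2}, or the quintic Castelnuovo manifolds in $\p^r$ for $r \in \{3,4,5\}$ which show up as case (VI) of Theorem \ref{thm:n=1}, case (XII) of Theorem \ref{thm:n=2}, and case (VI) of Theorem \ref{thm:n=3 r=5}.

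For $m \geq 2$ one has $a = m+2 \geq 4$ and $n \geq m \geq 2$. When $n = 2$ the only possibility is $r = 4$, $m = 2$, $a = 4$, so Theorem \ref{thm:n=2} applies and delivers cases (VI)--(X). When $n = 3$ we have $r = 5$, $m \in \{2,3\}$, $a \in \{4,5\}$, and Theorem \ref{thm:n=3 r=5} gives cases (I)--(V). A short count of all contributions gives $1 + (4 + 2 + 1) + (5 + 5) = 18$ types, matching the statement of the corollary.

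The main obstacle, and the last step of the proof, is to rule out $n \geq 4$ when $m \geq 2$. Here the plan is to combine three inputs: Proposition \ref{prop:num}(iv), which reads $b(n-m+1) = i - 1$, together with $i \leq r+1 = n+3$, severely restricts $b$ as soon as $n$ is large; Proposition \ref{prop:ff} bounds the degree of $X$ by $d < a^2 = (m+2)^2$; and Proposition \ref{prop:sec}(ii) forces $X \subset \p^{n+2}$ to contain a large family of $(n-m)$-dimensional hypersurfaces of degree $a$ lying in $(n-m+1)$-planes. Given that $X$ is smooth of codimension two in $\p^{n+2}$ with ideal generated by forms of degree $a = m+2$, I would derive a contradiction by \emph{liaison} (as in Lemmas \ref{lem:liaison} and \ref{lem:smooth liaison}) and Hartshorne-Rao-module analysis, in the spirit of the quintic Castelnuovo argument in the proof of Theorem \ref{thm:m=1}: link $X$ to a simpler codim-$2$ subvariety, then push the resulting resolution against the bound $d < (m+2)^2$ to force incompatibility with $X$ being smooth. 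This is the delicate point, since the convenient low-degree classifications of smooth codim-$2$ subvarieties available in $\p^4$ and $\p^5$ are no longer at hand for $n+2 \geq 6$.
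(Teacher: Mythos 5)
Your reduction for $r\le 5$, and your bookkeeping by $m$ (with $a=m+2$ forced by Proposition \ref{prop:num}(iii) since $r-n-1=1$, $m=0$ handled by Theorem \ref{thm:m=0}, $m=1$ by Theorem \ref{thm:m=1}, and the count of $18$ types), is correct and consistent with what the paper does; the paper simply says ``if $r\le 5$ we conclude by Theorems \ref{thm:n=1}, \ref{thm:n=2} and \ref{thm:n=3 r=5}'' and does not even need the $m$-stratification there. The problem is the step you yourself flag as the main obstacle: excluding $r\ge 6$ with $m\ge 2$. What you offer there is a strategy (``link $X$ to a simpler codimension-$2$ subvariety, then push the resolution against $d<(m+2)^2$''), not an argument, and it does not close. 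Linking $X$ inside a c.i.\ $(ab-1,a)$ (the hypersurface $\Sec_a(X)$ of degree $ab-1$ together with a degree-$a$ generator) tells you nothing unless you already control the residual subvariety, and for codimension $2$ in $\p^r$ with $r\ge 6$ there is no classification of low-degree smooth subvarieties to fall back on --- which is exactly the difficulty you acknowledge in your last sentence.

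The paper's mechanism for $r\ge 6$ is different and is the one genuinely needed ingredient your proposal is missing: from $d<a^{r-n}=a^2=(m+2)^2\le r^2$ one invokes the Holme--Schneider theorem (\cite[Theorem 5.1(ii)]{h-s}), a low-degree case of the Hartshorne conjecture in codimension $2$, to conclude that $X\subset\p^r$ is a complete intersection $(a',a)$ with $a'<a$. Then $a'=\deg(\Sec_a(X))=ab-1$ by Proposition \ref{prop:sec}(i), which forces $b=1$ and $a'=a-1$; and finally Example \ref{ex:c.i} together with Proposition \ref{prop:c.i} shows that smoothness of $Z$ forces $a=2$, so $m=0$ and one lands in the quadric-hypersurface case for every $r\ge 6$ (there is no contradiction to derive --- the case $m\ge 2$, $r\ge 6$ is absorbed into the ``or else'' clause by showing $a=2$ is forced). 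Note also that this last step uses the smoothness of $Z$, via Proposition \ref{prop:c.i}, not only properties of $X$; your sketch makes no use of $Z$ at this stage. Without citing a Holme--Schneider/Ran-type complete-intersection criterion (or supplying an equivalent), the case $r\ge 6$, $m\ge 2$ remains open in your proof.
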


\begin{proof}
If $r\leq 5$ we conclude by Theorems \ref{thm:n=1}, \ref{thm:n=2} and \ref{thm:n=3 r=5}. Assume $r\geq 6$. Note that $d<a^2=(m+2)^2$ by Propositions \ref{prop:ff} and \ref{prop:num}(iii), whence $d<r^2$. Therefore, we deduce from \cite[Theorem 5.1(ii)]{h-s} that $X\subset\p^r$ is a c.i. $(a',a)$, with $a'<a$. In that case $a'=\deg(\Sec_a(X))$, so Proposition \ref{prop:sec}(i) yields $b=1$ and $a'=a-1$. Therefore, we deduce from Example \ref{ex:c.i} and Proposition \ref{prop:c.i} that $a=2$.
\end{proof}

\begin{remark}\label{rem:h-r}
In view of Corollary \ref{cor:r=n+2}, we point out that if $r=n+2$ then the Hartshorne-Rao modules of $X\subset\p^r$ are very special. More precisely, $X\subset\p^r$ is either a complete intersection, or arithmetically Cohen-Macaulay, or arithmetically Buchsbaum (see \cite{chang}).
\end{remark}

In particular, we recover the following result:

\begin{corollary}[Ein and Shepherd-Barron]
Let $\Phi:\p^r\da\p^r$ be a special Cremona transformation of type
$(a,b)$. If $r=n+2$ then either $r\in\{3,4,5\}$, $(a,b)=(r,r)$, and $X\subset\p^r$ is defined by the $r\times r$-minors of an $r\times
(r+1)$-matrix of linear forms, or else $r=4$, $(a,b)=(3,2)$ and
$X\subset\p^4$ is a quintic elliptic scroll.
\end{corollary}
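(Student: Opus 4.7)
The plan is to specialize Corollary \ref{cor:r=n+2} to the case $Z=\p^r$. Since a special Cremona transformation has $Z=\p^r$ by definition, the branch $r\geq 6$ of Corollary \ref{cor:r=n+2} (where $Z\subset\p^{r+1}$ is a quadric hypersurface) never arises, so I restrict to $r\in\{3,4,5\}$ and scan the lists of Theorems \ref{thm:n=1}, \ref{thm:n=2} and \ref{thm:n=3 r=5}, selecting only the items whose target is $\p^r$.

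The surviving items are: Theorem \ref{thm:n=1}(IV), giving $r=3$, $(a,b)=(3,3)$ with $X$ a sextic curve of genus $3$ in $\p^3$; Theorem \ref{thm:n=2}(VI), giving $r=4$, $(a,b)=(4,4)$ with $X$ linked to a Bordiga surface by a c.i. $(4,4)$; Theorem \ref{thm:n=2}(XI), giving $r=4$, $(a,b)=(3,2)$ with $X$ a quintic elliptic scroll; and Theorem \ref{thm:n=3 r=5}(I), giving $r=5$, $(a,b)=(5,5)$ with $\I_X$ resolved by $0\to\O_{\p^5}(-6)^{\oplus 5}\to\O_{\p^5}(-5)^{\oplus 6}\to\I_X\to 0$. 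The quintic elliptic scroll matches the second alternative in the statement, so it remains only to recognize $X$ in the three $(a,b)=(r,r)$ cases as the variety cut out by the $r\times r$-minors of an $r\times(r+1)$-matrix of linear forms.

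For $r=5$ the displayed resolution is already the Eagon--Northcott complex of such a matrix. For $r=4$ I would apply Lemma \ref{lem:liaison}(ii) to the standard Eagon--Northcott resolution $0\to\O_{\p^4}(-4)^{\oplus 3}\to\O_{\p^4}(-3)^{\oplus 4}\to\I_{X'}\to 0$ of the linked Bordiga surface $X'\subset\p^4$ and to the c.i. $(4,4)$ that links $X$ to $X'$; the vanishing hypothesis reduces to $h^1(\O_{\p^4})=0$, and the resulting resolution $0\to\O_{\p^4}(-5)^{\oplus 4}\to\O_{\p^4}(-4)^{\oplus 5}\to\I_X\to 0$ is precisely the Eagon--Northcott complex of a $4\times 5$-matrix of linear forms. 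For $r=3$ a smooth sextic curve of genus $3$ in $\p^3$ is arithmetically Cohen--Macaulay of codimension $2$, and the Hilbert--Burch theorem supplies the desired $3\times 4$ determinantal description. Since essentially all the work has already been done in the preceding classification theorems, no substantial obstacle remains; the corollary is a matter of bookkeeping plus three routine Eagon--Northcott identifications.
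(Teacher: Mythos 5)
Your proposal is correct and follows exactly the route the paper intends: the corollary is stated as an immediate specialization of Corollary \ref{cor:r=n+2} to $Z=\p^r$, and your scan correctly isolates the four surviving cases (Theorem \ref{thm:n=1}(IV), Theorem \ref{thm:n=2}(VI) and (XI), Theorem \ref{thm:n=3 r=5}(I)), with the three $(a,b)=(r,r)$ cases identified as determinantal via the standard Hilbert--Burch/Eagon--Northcott descriptions, just as in Remark \ref{rem:cremona}(ii). The only cosmetic slip is calling the quadric-hypersurface alternative of Corollary \ref{cor:r=n+2} ``the branch $r\geq 6$'' (it also absorbs the small-$r$ quadric cases), but since $Z$ is a quadric and never $\p^r$ there, your exclusion of it is still valid.
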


Moreover, Corollary \ref{cor:r=n+2} yields the following classification in the case $m=2$ (cf. Theorems \ref{thm:m=0} and \ref{thm:m=1}):

\begin{theorem}\label{thm:m=2}
Let $\Phi:\p^r\da Z$ be a special birational transformation of type
$(a,b)$. If $m=2$ then either $a=4$, $r=n+2$ and $X\subset\p^r$ and $Z$ are given in Theorem \ref{thm:n=2}, cases (VI)-(X), and Theorem \ref{thm:n=3 r=5}, case (VI), or else $a=2$, $r=n+3$ and $X\subset\p^r$ and $Z$ are given in Theorem \ref{thm:n=1}, cases (I)-(III), Theorem \ref{thm:n=2}, cases (IV)-(V), and Theorem \ref{thm:n=3 easy}, case (II).
\end{theorem}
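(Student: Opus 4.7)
The starting point is Proposition \ref{prop:num}(iii): setting $m=2$ gives $a(r-n-1)=4$, and since $a\geq 2$ the only possibilities are $(a,r-n-1)\in\{(4,1),(2,2)\}$. This immediately splits the argument into the two advertised cases $(a,r)=(4,n+2)$ and $(a,r)=(2,n+3)$.

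In the first case I would feed the data into Corollary \ref{cor:r=n+2}. The ``else'' alternative of that corollary forces $a=2$ by Theorem \ref{thm:m=0}, contradicting $a=4$; hence $r\in\{3,4,5\}$. The numerology of Proposition \ref{prop:num n=1} excludes $a=4$ when $r=3$, so $r\in\{4,5\}$. Extracting from the list in Corollary \ref{cor:r=n+2} the cases with $a=4$ (equivalently $m=2$) yields exactly Theorem \ref{thm:n=2}, cases (VI)--(X), together with the unique $a=4$ subcase of Theorem \ref{thm:n=3 r=5}; the remaining cases in Corollary \ref{cor:r=n+2} have $m\in\{0,1,3\}$, as can be read off directly from Propositions \ref{prop:num n=1}, \ref{prop:num n=2} and \ref{prop:num n=3}.

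In the second case, Proposition \ref{prop:num}(iv) combined with the Fano bound $i\leq r+1=n+4$ yields $b=(i-2)/n\leq 1+2/n$. Hence $b\in\{1,2,3\}$ for $n=1$, $b\in\{1,2\}$ for $n=2$, and $b=1$ for $n\geq 3$. For $n\in\{1,2,3\}$, Theorems \ref{thm:n=1}, \ref{thm:n=2} and \ref{thm:n=3 easy} supply exactly the cases asserted. To rule out $n\geq 4$, one notes that $b=1$ and $i=n+2$ make $Z$ a Del Pezzo manifold of dimension $n+3$ and coindex $2$; Fujita's classification then forces $z\in\{3,4,5\}$. The value $z=5$ is impossible because $\g(1,4)$ has dimension $6<n+3$. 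For $z\in\{3,4\}$, Proposition \ref{prop:sec}(i) places $X$ inside a hyperplane $\p^{n+2}$, and subtracting from $H^0(\mathcal{I}_X(2))$ the $n+4$ quadrics divisible by the defining linear form leaves at most $z-1\leq 3$ quadrics cutting $X$ out in $\p^{n+2}$; this forces $X$ to be a complete intersection of two of them, and a graph-type description of the resulting $Z$ in the spirit of Example \ref{ex:c.i} and Proposition \ref{prop:c.i} exhibits singularities along a linear subvariety in the ``extra'' coordinates, contradicting the smoothness of $Z$.

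The main obstacle is this final singularity analysis ruling out $n\geq 4$ in the $a=2$ case; everything else is a careful bookkeeping with Corollary \ref{cor:r=n+2} and the dimension-specific classifications already established.
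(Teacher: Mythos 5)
Your proposal is correct, and its skeleton is exactly the paper's: split via Proposition \ref{prop:num}(iii) into $(a,r)=(4,n+2)$ and $(a,r)=(2,n+3)$, dispatch the first branch to Corollary \ref{cor:r=n+2}, and bound $b=(i-2)/(r-3)$ in the second so that only $r\in\{4,5,6\}$ need the earlier classification theorems. The one genuine divergence is how the residual case $a=2$, $b=1$, $n\geq 4$ is killed. The paper argues on the base locus: $\Sec_2(X)$ is a hyperplane, so $X\subset\p^{r-1}$ has codimension $2$ there, is cut out by the restricted quadrics, hence $d\leq 4$, and the classification of low-degree codimension-$2$ varieties leaves only $\p^1\times\p^2$. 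You argue on the target: $i=n+2$ makes $Z$ a Del Pezzo manifold, Fujita gives $z\in\{3,4,5\}$, dimension kills $z=5$, and a quadric count forces $X$ to be a degenerate complete intersection, whence $Z$ is the $(2,2)$-intersection $V(z_{r+1}\ell-Q_1,\,z_{r+2}\ell-Q_2)\subset\p^{r+2}$, visibly singular along the line $\{z_0=\dots=z_r=0\}$. Both routes work; yours has the merit of making explicit the exclusion of the degenerate c.i.\ $(2,2)$, which the paper leaves implicit, at the cost of invoking the coindex-$2$ classification. Two small corrections. First, the number of independent quadrics cutting $X$ out in the hyperplane is $z-2$, not $z-1$: the Del Pezzo $Z\subset\p^{\alpha}$ has codimension $z-2$, so $h^0({\mathcal I}_X(2))=r+z-1$ and the quotient by the $(r+1)$-dimensional space $\ell\cdot H^0(\O_{\p^r}(1))$ has dimension $z-2$. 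This matters: with three quadrics (your count for $z=4$) a codimension-$2$ base scheme need not be a complete intersection of two of them (e.g.\ the Segre $\p^1\times\p^2$), whereas with the correct count of two the c.i.\ conclusion is immediate, and for $z=3$ a single quadric cannot cut out codimension $2$ at all. Second, your identification of ``the unique $a=4$ subcase of Theorem \ref{thm:n=3 r=5}'' is case (V), not case (VI) as printed in the statement of Theorem \ref{thm:m=2}; your reading is the consistent one, since case (VI) has $a=3$ and hence $m=1$.
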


\begin{proof}
If $m=2$ we deduce from Proposition \ref{prop:num}(iii) that either $a=4$ and $r=n+2$, or else $a=2$ and $r=n+3$. In the first case, we conclude by Corollary \ref{cor:r=n+2}. In the second case, Proposition \ref{prop:num}(iv) yields $b=(i-2)/(r-3)\leq (r-1)/(r-3)$. So $b\in\{1,2,3\}$
for $r=4$, $b\in\{1,2\}$ for $r=5$, and $b=1$ for $r\geq 6$. If $r=4$ we apply Theorem \ref{thm:n=1}, and if $r=5$ we apply Theorem \ref{thm:n=2}. Finally, if $r\geq 6$ then $b=1$ so $\deg(\Sec_2(X))=1$ by Proposition \ref{prop:sec}(i). In particular, $X\subset\p^r$ is contained in a hyperplane and $d\leq 4$ so we deduce that the only possibility is that given in Theorem \ref{thm:n=3 easy}(II).
\end{proof}

\section{Constructing examples}\label{section:ex}

In this section, we present some series of examples of birational transformations of $\p^r$. They contain, in particular, all the special birational transformations that appear throughout the paper. The following result, essentially noticed in \cite{kleiman}, will be useful for our purpose:

\begin{theorem}\label{thm:kleiman}
Let $E, F$ be vector bundles on $\p^r$, $r\leq 5$. If $\rk(F)=\rk(E)+1$ and $E^*\otimes F$ is globally generated, then the generic morphism $u: E\to F$ degenerates on a manifold $X\subset\p^r$ of codimension $2$. Furthermore, we get the exact sequence $$0\to E\stackrel{u}\to F\to {\mathcal I}_X(c_1(F)-c_1(E))\to 0$$
\end{theorem}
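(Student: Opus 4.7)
The plan is to apply Kleiman's generic transversality theorem to the universal family of morphisms parametrized by the vector space $H^0(\p^r,E^*\otimes F)$, and then to exploit the hypothesis $r\leq 5$ to kill the deeper degeneracy strata. Set $e:=\rk(E)$, so $\rk(F)=e+1$. I would view a morphism $u:E\to F$ as a section of $\mathcal G:=E^*\otimes F$, and for $k\geq 0$ I would describe the degeneracy loci $D_k(u)=\{x\in\p^r:\rk(u_x)\leq k\}$ intrinsically as the pullback along $u$ of the universal rank-$\leq k$ subscheme in the total space of $\mathcal G$. The relevant stratum is $D_{e-1}(u)$, whose expected codimension is $(e-(e-1))((e+1)-(e-1))=2$, while the next one $D_{e-2}(u)$ has expected codimension $2\cdot 3=6$.

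Since $\mathcal G$ is globally generated, the space of sections cuts out these universal strata transversally in the total space; Kleiman's theorem then ensures that for a generic $u\in H^0(\p^r,\mathcal G)$ each $D_k(u)$ is either empty or of the expected codimension, and moreover the open stratum $D_{k}(u)\setminus D_{k-1}(u)$ is smooth. This is where the hypothesis $r\leq 5$ pays off: the expected codimension of $D_{e-2}$ is $6>r$, so for generic $u$ one has $D_{e-2}(u)=\emptyset$. Hence $X:=D_{e-1}(u)$ coincides with its open stratum and is smooth of pure codimension $2$ in $\p^r$.

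For the exact sequence, first I note that since $u$ is pointwise injective off $X$ and $E$ is torsion-free, $u:E\to F$ is injective as a sheaf map. Let $Q:=\operatorname{coker}(u)$, which is torsion-free of generic rank $1$ (it is even locally free outside $X$ and $X$ has codimension $2$). Taking top exterior powers gives $\det u:\det E\to \det F$, i.e.\ a global section of $\O_{\p^r}(c_1(F)-c_1(E))$ whose scheme-theoretic vanishing locus is exactly the $0$-th Fitting scheme of $Q$, which is $X$ by the very definition of the degeneracy locus. Therefore $Q\otimes\O_{\p^r}(c_1(E)-c_1(F))$ is a torsion-free rank-one sheaf with trivial determinant whose reflexive hull is $\O_{\p^r}$; being the ideal of its non-locally-free locus, it must equal $\I_X$, giving $Q\cong \I_X(c_1(F)-c_1(E))$ as required.

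The main obstacle, and the only delicate point, is the smoothness of $X$: one really needs the full strength of Kleiman's transversality (not merely Bertini on a linear system of divisors) to conclude that the open stratum of a generic degeneracy locus is smooth of expected codimension, together with the numerical input $r\leq 5$ which rules out the singular stratum $D_{e-2}(u)$. Once these are in place, the exact sequence is a formal consequence of generic injectivity and the Fitting-ideal interpretation of $X$.
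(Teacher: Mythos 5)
The paper does not actually prove this statement---it is recorded as ``essentially noticed in'' Kleiman's work and used as a black box---so your argument is not competing with a written proof. Your overall architecture is the standard and correct one: Kleiman-type generic transversality for sections of the globally generated bundle $E^*\otimes F$ gives that every degeneracy locus $D_k(u)$ of a generic $u$ is either empty or of the expected codimension $(\rk(E)-k)(\rk(F)-k)$ and smooth away from $D_{k-1}(u)$; since the stratum $D_{e-2}(u)$ has expected codimension $6>r$, it is empty for $r\leq 5$, so $X=D_{e-1}(u)$ is smooth of pure codimension $2$. This is exactly the content of the citation, and your identification of the cokernel with $\I_X(c_1(F)-c_1(E))$ is the Hilbert--Burch/Eagon--Northcott mechanism that the paper implicitly relies on.

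That said, two steps in your treatment of the exact sequence are wrong as written and need repair. First, there is no morphism ``$\det u:\det E\to\det F$'': the ranks differ, so no determinant of $u$ exists, and in any case a single section of the line bundle $\O_{\p^r}(c_1(F)-c_1(E))$ vanishes on a divisor, never on a codimension-$2$ scheme. The correct object is $\wedge^e u:\det E\to\wedge^e F\cong F^*\otimes\det F$, equivalently a map $F\to\O_{\p^r}(c_1(F)-c_1(E))$ whose image is the sheaf of maximal minors of $u$, i.e. $\I_X(c_1(F)-c_1(E))$; note also that the relevant Fitting ideal of $Q=\operatorname{coker}(u)$ is $\operatorname{Fitt}_1$, since $\operatorname{Fitt}_0(Q)=0$ for an $(e+1)\times e$ presentation matrix. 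Second, torsion-freeness of $Q$ does not follow from its being locally free off a codimension-$2$ set (consider $\O_{\p^r}\oplus\O_X$); you must also use that $Q$ admits the length-one locally free resolution $0\to E\to F\to Q\to 0$, so that Auslander--Buchsbaum forces $\operatorname{depth}Q_x\geq\dim\O_{\p^r,x}-1$ and rules out associated primes of codimension $\geq 2$, while local freeness off $X$ rules out those of codimension $1$. With these two points fixed, the composite $E\to F\to\O_{\p^r}(c_1(F)-c_1(E))$ vanishes by Laplace expansion, the induced surjection $Q\to\I_X(c_1(F)-c_1(E))$ has torsion kernel between sheaves of rank one, and torsion-freeness of $Q$ makes it an isomorphism; this is cleaner than the detour through reflexive hulls, which by itself does not identify the subscheme $W$ with $X$ scheme-theoretically.
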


\begin{remark}\label{rem:porteous}
In the above context, if $r\geq 6$ the cycle corresponding to the singular locus of $X\subset\p^r$ is given by the well-known Giambelli-Thom-Porteous formula (see for instance \cite[Ch. 14]{ful}).
\end{remark}

\begin{example}\label{ex:c.i}
Let $X\subset\p^r$ be given by the resolution
$$0\to\oplus_{j=1}^c\O_{\p^r}(a-\beta_j)\stackrel{u}\to\O_{\p^r}^{\oplus c}\oplus\O_{\p^r}(1)\to {\mathcal I}_X(a)\to 0$$
corresponding to a morphism $u$, where $\sum_{j=1}^c\beta_j=(c+1)a-1$, $a\leq r$ and $\beta_j>a$. Then we claim that a basis of $H^0({\mathcal I}_X(a))$ defines a birational transformation $\Phi:\p^r\da Z\subset\p^{r+c}$ of type $(a,1)$ onto a c.i. $(\beta_1+1-a,\dots,\beta_c+1-a)$. To prove the claim, let $\p^r:=\p(V)$, with coordinates $\underline{x}:=(x_0:\dots:x_r)$, and let $\varepsilon_j:=\beta_{j}-a$. Note that $I_X$ is generated by a unique polynomial $f$ of degree $a-1$, corresponding to the equation of $\Sec_{a}(X)$, and $c$ polynomials $f_1,\dots,f_c$ of degree $a$, as we see directly from the matrix representing $u$ and the locus where the rank drops. By choosing coordinates $\underline{z}:=(z_0:\dots:z_r:z_{r+1}:\dots:z_{r+c})$ in $\p^{r+c}$, we have that $\Phi$ is given by $z_i=x_if$ for $i=0,\dots,r$ and $z_{r+j}=f_j$ for $j=1,\dots,c$. Note that the equations of $Y\subset\p^{r+c}$ are $\{z_0=\dots=z_r=0\}$ and that $\Phi_{|\p^r\backslash\Sec_a(X)}:\p^r\backslash\Sec_a(X)\to Z\backslash Y$ is an isomorphism. Let us consider the following diagram
\[
\xymatrix
{
        &                                                         & 0                                                  &                         &   \\
0\ar[r] & \oplus_{j=1}^c\O_{\p^r}(-\varepsilon_j)\ar[r]\ar@{=}[d] & \O_{\p^r}^{\oplus c}\oplus\O_{\p^r}(1)\ar[r]\ar[u] & {\mathcal I}_X(a)\ar[r] & 0 \\
        & \oplus_{j=1}^c\O_{\p^r}(-\varepsilon_j)\ar[r]           & \O_{\p^r}^{\oplus c}\oplus V\otimes\O_{p^r}\ar[u]  &                         &   \\
        &                                                         & \wedge^2(V)\otimes\O_{\p^r}(-1)\ar[u]              &                         &   \\
}
\]

By the mapping cone technique, we have that a set of $r+c+1$
generators of degree $a$ of $I_X$ is given by the unique syzygy of $M^{t}$, where $M$ is a
matrix as follows

\[
M=\left(
\begin{array}{cc}
\begin{array}{c}
(r+1)\times c\text{ matrix having} \\
\text{columns of degrees $\varepsilon_1,\dots,\varepsilon_c$}
\end{array}
&
\begin{array}{c}
(r+1)\times\binom{r+1}{2}\text{ Koszul matrix} \\
\text{$K$ in the $r+1$ variables $\underline{x}$}
\end{array}
\\
\begin{array}{c}
c\times c\text{ matrix having} \\
\text{columns of degrees $\varepsilon_1,\dots,\varepsilon_c$}
\end{array}
& 0
\end{array}
\right)
\]

Moreover, the first syzygies of this set of generators are the columns of $M$. Hence we have that the graph of $\Phi$ is given by $\underline{z}M=0$,
and a set of generators for $I_Z$ is obtained by eliminating $\underline{x}$ from $\underline{z}M=0$. In this case, the product $(z_0,\dots,z_r)K=0$ gives $x_i=z_i$ for $i=0,\dots, r$. Therefore, the elimination is straightforward and $I_Z$ is generated by exactly $c$ polynomials of degrees $\varepsilon_1+1,\dots,\varepsilon_c+1$, proving the claim.
\end{example}

\begin{remark}
Example \ref{ex:c.i} provide an explicit construction of complete intersections $Z\subset\p^{r+c}$ which are rational Fano varieties of Picard number one and index $r+2-a$ with only $\q$-factorial and terminal singularities. Note that $\Psi:Z\da\p^r$ is the projection from $Y=\p^{c-1}$.
\end{remark}

\begin{remark}
According to Theorem \ref{thm:kleiman} and Remark \ref{rem:porteous}, for generic $u$ in Example \ref{ex:c.i} we get a smooth $X\subset\p^r$ if and only if either $c=1$, or $c\geq 2$ and $r\in\{3,4,5\}$. Furthermore:
\end{remark}

\begin{proposition}\label{prop:c.i}
If $Z\subset\p^{r+c}$ is smooth in Example \ref{ex:c.i} then $Z\subset\p^{r+c}$ is a c.i. $(2,\dots,2)$ and $r\geq 2c-2$. Conversely, in this case, if $u$ is generic and $r\geq 2c-2$ then $Z\subset\p^{r+c}$ is smooth.
\end{proposition}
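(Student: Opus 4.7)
The plan is to localize the question at $Y \cong \p^{c-1}$, which contains all possible singularities of $Z$ since $\Phi$ restricts to an isomorphism $\p^r \setminus \Sec_a(X) \to Z \setminus Y$. Using the explicit elimination carried out in Example \ref{ex:c.i}, I would first rewrite the $c$ generators of $I_Z$ as
\[
G_j \;=\; \sum_{i=0}^{r} z_i \, \widetilde{A}_{ij}(z_0,\dots,z_r) \;+\; \sum_{k=1}^{c} z_{r+k} \, \widetilde{B}_{kj}(z_0,\dots,z_r),
\]
where $\widetilde{A}_{ij}$ and $\widetilde{B}_{kj}$ are homogeneous of degree $\varepsilon_j$ in $z_0,\dots,z_r$ only, obtained from the columns of the matrix $M$ after the Koszul-induced identification $x_i = z_i$. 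A direct computation at a point $y=(0,\dots,0,y_{r+1},\dots,y_{r+c})\in Y$ gives
\[
\frac{\partial G_j}{\partial z_{r+k}}(y) = \widetilde{B}_{kj}(0)=0, \qquad \frac{\partial G_j}{\partial z_\ell}(y) = \sum_{k=1}^{c} y_{r+k}\, \Bigl(\tfrac{\partial \widetilde{B}_{kj}}{\partial z_\ell}\Bigr)(0) \quad (0\leq\ell\leq r).
\]
If some $\varepsilon_j \geq 2$, the inner derivative still has positive degree and vanishes at the origin, so the entire $j$-th row of the Jacobian is zero along $Y$; hence smoothness of $Z$ forces $\varepsilon_j = 1$ for every $j$, i.e.\ $Z$ is a complete intersection of $c$ quadrics.

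Under the assumption $\varepsilon_j = 1$ for all $j$, each $\partial G_j/\partial z_\ell(y)$ becomes a linear form in $y_{r+1},\dots,y_{r+c}$, and smoothness of $Z$ along $Y$ is equivalent to the $c \times (r+1)$ matrix $N(y):=(\partial G_j/\partial z_\ell(y))$ having rank $c$ at every $y \in \p^{c-1}$. The degeneracy locus $D \subset \p^{c-1}$ is a determinantal subscheme of codimension at most the expected value $(r+1)-c+1 = r-c+2$, so $\dim D \geq 2c-3-r$ whenever $D$ is non-empty; if $Z$ is smooth then $D=\emptyset$, which forces $r \geq 2c-2$. Conversely, when $r \geq 2c-2$ and $u$ is chosen generically, the entries of $N$ (namely the coefficients of the linear forms in the entries of the block $B$ of $M$) are generic, so by the standard genericity statement for determinantal loci, $D$ has the expected codimension $r-c+2 \geq c > \dim \p^{c-1}$ and is therefore empty. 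Thus $N$ has full rank everywhere and $Z$ is smooth.

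The main obstacle is checking that a generic $u$ in Example \ref{ex:c.i} really produces a generic matrix $N$, so that the genericity statement can be applied unchanged on $\p^{c-1}$. This reduces to the observation that once the degrees $\varepsilon_j=1$ are fixed, the coefficients $b_{k,\ell,j}$ appearing in $N$ are exactly the (free) coefficients of the linear forms constituting the entries of $B$, and hence can be independently prescribed by varying $u$.
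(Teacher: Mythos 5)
Your proof is correct, and the second half of it (the rank analysis of the $c\times(r+1)$ matrix $N$ of linear forms on $Y=\p^{c-1}$, the bound $r\geq 2c-2$ via the degeneracy locus, and the genericity argument for the converse) is essentially the same as the paper's, which views $N$ as a morphism $\O_Y(-1)^{\oplus(r+1)}\to\O_Y^{\oplus c}$ and invokes the degeneracy-locus dimension bound. Where you genuinely diverge is in the first conclusion, that smoothness forces $Z$ to be a c.i.\ of quadrics: the paper gets this from the numerology, namely Proposition \ref{prop:num}(ii) applied to the smooth prime Fano $Z$ gives $m=a-2$, hence $a=c+1$, and then $\sum_j\beta_j=c^2+2c$ together with $\beta_j\geq c+2$ forces $\beta_j=a+1$ by pigeonhole; you instead differentiate the explicit generators $G_j$ at points of $Y$ and observe that if some $\varepsilon_j\geq 2$ the entire $j$-th row of the Jacobian vanishes identically along $Y$, so $Z$ is singular there by the Jacobian criterion (legitimate, since Example \ref{ex:c.i} shows the $G_j$ generate $I_Z$ and $Z$ has codimension $c$). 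Your route is more elementary and self-contained --- it does not require first recognizing $Z$ as a prime Fano to feed into Proposition \ref{prop:num} --- and it has the added benefit of producing the matrix $N$ that is reused in the second half; the paper's route is shorter because it recycles machinery already in place. One point to tighten: from ``$\cd D\leq r-c+2$, hence $\dim D\geq 2c-3-r$ whenever $D\neq\emptyset$'' you cannot conclude $r\geq 2c-2$ merely from $D=\emptyset$; you need the \emph{nonemptiness} of the degeneracy locus when its expected dimension is nonnegative, which holds here because $\mathcal{H}om\bigl(\O_Y(-1)^{\oplus(r+1)},\O_Y^{\oplus c}\bigr)\cong\O_Y(1)^{\oplus(r+1)c}$ is ample on the projective variety $Y$ (Fulton--Lazarsfeld; this is the ``well-known extension of Theorem \ref{thm:kleiman}'' the paper cites). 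With that reference inserted, your argument is complete.
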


\begin{proof}
We recall that $\Phi(\Sec_a(X))=Y\subset Z$ is a $\p^{c-1}$, whence $m=c-1$. If $Z$ is smooth then Proposition \ref{prop:num}(ii) yields $r+1=ia-(r-m-1)(ab-1)$. As $b=1$ and $i=r+2-a$, we deduce $m=a-2$ and hence $a=c+1$. In particular, $\sum_{j=1}^c\beta_j=c^2+2c$ so $\beta_j=a+1$ for every $j\in\{1,\dots,c\}$, as $\beta_j\geq a+1=c+2$. Therefore, $Z\subset\p^{r+c}$ is a c.i. $(2,\dots,2)$. The Jacobian matrix of the $c$ equations of $Z\subset\p^{r+c}$ restricted to $Y=\p^{c-1}$ is a $(c,r+1)$-matrix of linear forms in the variables $z_{r+1},\dots,z_{r+c}$ representing a morphism $\O_Y(-1)^{\oplus(r+1)}\to\O_Y^{\oplus c}$. By a well-known extension of Theorem \ref{thm:kleiman}, this matrix drops rank on a subvariety of $Y$ of dimension at least $2c-3-r$. Therefore, $r\geq 2c-2$. Conversely, if $u$ is generic and $r\geq 2c-2$ then the same argument shows that $Z$ is smooth along $Y$, so we deduce that $Z$ is smooth since $\Phi_{|\p^r\backslash\Sec_a(X)}:\p^r\backslash\Sec_a(X)\to Z\backslash Y$ is an isomorphism.
\end{proof}

\begin{remark}
In particular, for generic $u$ in Example \ref{ex:c.i}, we get a special birational transformation if and only if $\beta_j=a+1$ for every $j\in\{1,\dots,c\}$ and either $c=1$ (cf. Corollary \ref{cor:r=n+2}), or $c\geq 2$ and $r\in\{3,4,5\}$ (cf. Theorems \ref{thm:n=1}(VI), \ref{thm:n=2}(IX) and (XII), and \ref{thm:n=3 r=5}(V-VI), respectively).
\end{remark}

\begin{remark}
As $Y\subset Z$ is a linear subspace of dimension $c-1$, we deduce that $Z\subset\p^{r+c}$ needs not to be a general c.i. $(2,\dots,2)$ for $r\geq 2c-2$. On the other hand, we recall that a general c.i. $(2,\dots,2)$ contains a linear subspace of dimension $c-1$ (and hence is rational) for $r\geq(c^2+c-2)/2$ (see \cite{predonzan}).
\end{remark}

A straightforward generalization of \cite[Proposition 4.5]{e-sb} gives a method to produce (special) birational transformations of $\p^r$. It will be used in Examples \ref{ex:g(1,r+1)}, \ref{ex:hypersurfaces} and \ref{ex:degree21} below:

\begin{proposition}\label{prop:E gg}
Let $E$ be a globally generated vector bundle on $\p^r$ of rank $k+1$. Let $p:\p(E)\to\p^r$, and let $q:\p(E)\to\mathcal Z\subset\p(H^0(E))$ be the map corresponding to the tautological line bundle on $\p(E)$, where $\mathcal Z:=q(\p(E))$. If $q:\p(E)\to\mathcal Z$ is a birational map then a general linear subspace $W\subset H^0(E)$ of dimension $k$ gives a birational transformation $\Phi:\p^r\da Z$, where $Z\subset\p^{h^0(E)-1-k}$ is the linear section of $\mathcal Z\subset\p^{h^0(E)-1}$ corresponding to $W$. Moreover, we get the exact sequence $0\to W\otimes\O_{\p^r}\stackrel{u}\to E\to {\mathcal I}_X(c_1(E))\to 0$ and $\Phi$ is given by a basis of $H^0({\mathcal I}_X(c_1(E)))$.
\end{proposition}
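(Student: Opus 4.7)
The proof is a direct generalization of \cite[Proposition 4.5]{e-sb}, and I would organize it in three steps.

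First, I would produce the exact sequence. Since $E$ is globally generated of rank $k+1$, so is $W^*\otimes E$ for any linear subspace $W\subset H^0(E)$ of dimension $k$. Applying Theorem \ref{thm:kleiman} with the roles of $E$ and $F$ played by $W\otimes\O_{\p^r}$ and $E$, respectively, a generic $u:W\otimes\O_{\p^r}\to E$ degenerates on a codimension-$2$ locus $X\subset\p^r$, giving the short exact sequence
\[0\to W\otimes\O_{\p^r}\stackrel{u}\to E\to{\mathcal I}_X(c_1(E))\to 0.\]
Taking cohomology and using $H^1(W\otimes\O_{\p^r})=0$ identifies $H^0({\mathcal I}_X(c_1(E)))$ with $H^0(E)/W$, a linear system of projective dimension $h^0(E)-1-k$ that defines a rational map $\Phi:\p^r\da\p^{h^0(E)-1-k}$.

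Second, I would realise $\Phi$ geometrically inside $\p(E)$. Under the standard identification $H^0(\p(E),\O_{\p(E)}(1))=H^0(E)$, the $k$ sections spanning $W$ determine $k$ general members of the base-point-free linear system $|\O_{\p(E)}(1)|$, whose common zero scheme $\widetilde{\p^r}\subset\p(E)$ has dimension $r$. Each $s\in W$ corresponds to a hyperplane $H_s\subset\p(H^0(E))$ with $q^{-1}(H_s)$ equal to the divisor of $s$; consequently
\[q(\widetilde{\p^r})\ \subset\ \mathcal Z\cap\Lambda\ =:\ Z,\qquad\Lambda:=\bigcap_{s\in W}H_s\ \cong\ \p^{h^0(E)-1-k},\]
and the linear section $Z$ has the expected dimension $\dim\mathcal Z-k=r$.

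Third, I would check that both projections from $\widetilde{\p^r}$ are birational for generic $W$. For $p|_{\widetilde{\p^r}}:\widetilde{\p^r}\to\p^r$: over $x\notin X$ the fibre $p^{-1}(x)\cong\p(E_x)\cong\p^k$ is intersected by the $k$ hyperplanes induced by the images of $W$ in $E_x$, which are in general position because $W\to E_x$ is injective there, so the intersection reduces to a single reduced point. For $q|_{\widetilde{\p^r}}:\widetilde{\p^r}\to Z$: the birationality of $q:\p(E)\to\mathcal Z$ combined with Bertini-type genericity of $W$ yields birationality onto the $r$-dimensional $Z$. Composing produces the desired birational $\Phi=q\circ(p|_{\widetilde{\p^r}})^{-1}:\p^r\da Z$, and pulling back hyperplane classes identifies the defining linear system of $\Phi$ with $H^0(\Lambda,\O_\Lambda(1))=H^0(E)/W=H^0({\mathcal I}_X(c_1(E)))$, matching the sequence of the first step. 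The main obstacle is the simultaneous Bertini/Kleiman transversality needed in this last step: one generic choice of $W$ must control the degeneracy locus $X$ (via Theorem \ref{thm:kleiman}), make the $k$ hyperplanes meet every fibre of $p$ transversally, and leave $q|_{\widetilde{\p^r}}$ birational. Each of these is standard once one exploits the global generation of $E$ on $\p^r$ and the birationality of $q$ on $\p(E)$.
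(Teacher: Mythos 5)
Your argument is correct and takes essentially the same route as the paper: the authors likewise define $\Phi(P)$ as the unique point of $q(p^{-1}(P))\cong\p^k$ on the codimension-$k$ linear space $\Sigma_W$ cut out by $W$, identify $X$ with the locus where this intersection jumps in dimension (equivalently, the degeneracy locus of $W\otimes\O_{\p^r}\to E$), and read off the defining linear system from the resulting exact sequence via $q^{-1}(Z)=\p({\mathcal I}_X(c_1(E)))$, so your incidence variety $\widetilde{\p^r}=q^{-1}(\Lambda)$ is just a more explicit packaging of the same proof. One small caveat: Theorem \ref{thm:kleiman} as stated assumes $r\leq5$ (that hypothesis is only there to make $X$ smooth), whereas the proposition is applied for arbitrary $r$ and claims no smoothness, so for the exact sequence you should instead invoke the dimension-free fact that a generic $u$ has degeneracy locus of codimension exactly $2$, which suffices for the Eagon--Northcott resolution.
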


\begin{proof}
A general linear subspace $W\subset H^0(E)$ of dimension $k$ corresponds to a general linear subspace $\Sigma_W\subset\p(H^0(E))$ of codimension $k$. In particular, $\Sigma_W$ intersects a general fibre $\p_{P}^k:=q(p^{-1}(P))\subset\mathcal Z$ in a point. Therefore, if $q:\p(E)\to\mathcal Z$ is a birational map then we get a birational transformation $\Phi:\p^r\da Z$ defined by $\Phi(P)=\p_{P}^k\cap\Sigma_W$. Moreover $X:=\{P\in\p^r\mid\dim(\p_{P}^k\cap\Sigma_W)>0\}$, and we get the exact sequence of the statement. Note that $q^{-1}(Z)=\p({\mathcal I}_X(c_1(E)))$, so we deduce that $\Phi$ is given by a basis of $H^0({\mathcal I}_X(c_1(E)))$.
\end{proof}

\begin{remark}
We point out that $q:\p(E)\to\mathcal Z$ is a birational map if and only if $(-1)^rs_r(E)=\deg\mathcal Z$. In particular, $z:=\deg(Z)=(-1)^rs_r(E)$ in Proposition \ref{prop:E gg}.
\end{remark}

\begin{example}\label{ex:g(1,r+1)}
Let $X\subset\p^r$ be given by the resolution
$$0\to\O_{\p^r}^{\oplus r}\to\Omega_{\p^r}(2)\oplus\O_{\p^r}(1)\to {\mathcal I}_X(r)\to 0$$
By Proposition \ref{prop:E gg}, a basis of $H^0({\mathcal I}_X(r))$ gives a birational transformation
$\Phi:\p^r\da Z\subset\p^{\alpha}$ of type $(r,1)$, where ${\alpha}={(r^2+r)/2}$, onto a linear
section of $\g(1,r+1)\subset\p^{(r^2+3r)/2}$. In fact, let us consider the resolution $$0\to\O_{\p^{r+1}}^{\oplus r}\stackrel{u}\to\Omega_{\p^{r+1}}(2)\to{\mathcal I}_{\bar X}(r)\to 0$$
corresponding to a morphism $u$ and defining a subvariety ${\bar X}\subset\p^{r+1}$ of codimension $2$. By arguing as in Example \ref{ex:c.i} and by using the mapping cone technique, we have that a set of $\alpha+1$ generators of degree $r$ of $I_{\bar X}$ is given by the unique syzygy of $M^{t}$, where $M$ is a matrix as follows
\[
M=\left(
\begin{array}{cc}
\begin{array}{c}
\binom{r+2}{2}\times\binom{r+2}{3}\text{ Koszul matrix} \\
\text{$K$ in $r+2$ variables $\underline{x}$}
\end{array}
&
\begin{array}{c}
\binom{r+2}{2}\times r\text{ matrix} \\
\text{of constants}
\end{array}
\end{array}
\right)
\]

Moreover, the first syzygies of this set of generators are the columns of $M$. By choosing coordinates $\underline{x}:=(x_0:\dots:x_{r+1})$ in $\p^{r+1}$ and $\underline{z}:=(z_0:\dots:z_{\alpha})$ in $\p^{\alpha}$, we have that the rational map $\bar{\Phi}$ induced by these generators of $I_{\bar X}$ has a graph given by $\underline{z}M=0$. The general fibre of $\bar{\Phi}$ is an $r$-secant line to ${\bar X}$. A set of generators for $I_{\bar{\Phi}(\p^{r+1})}$ is obtained by eliminating $\underline{x}$ from $\underline{z}M=0$. In this case, $\bar{\Phi}(\p^{r+1})$ is the intersection of $\g(1,r+1)\subset\p^{(r^2+3r)/2}$ with $r$ hyperplanes. By restricting the above resolution to a hyperplane of $\p^r\subset\p^{r+1}$ we get the desired resolution of ${\mathcal I}_X(r)$, where $X:={\bar X}\cap\p^r$ and obviously $Z:=\Phi(\p^r)=\bar{\Phi}(\p^{r+1})$. For generic $u$, the $r$ hyperplanes are general and hence $Z$ is smooth. So we obtain a special birational transformation if and only if $r\in\{3,4,5\}$ by Theorem \ref{thm:kleiman} and Remark \ref{rem:porteous} (cf. Theorems \ref{thm:n=1}(VII), \ref{thm:n=2}(X) and \ref{thm:n=3 r=5}(IV), respectively).
\end{example}

In a slightly different way we obtain the following:

\begin{example}\label{ex:g(1,r)}
Let $X\subset\p^r$ be given by the resolution
$$0\to\O_{\p^r}^{\oplus r-2}\oplus\O_{\p^r}(-1)\stackrel{u}\to\Omega_{\p^r}(2)\to {\mathcal I}_X(r)\to 0$$
corresponding to a morphism $u$. Then a basis of $H^0({\mathcal I}_X(r))$ gives a birational transformation $\Phi:\p^r\da Z\subset\p^{\alpha}$ of type $(r,2)$, where ${\alpha}={(r^2-r+2)/2}$, onto a linear
section of $\g(1,r)\subset\p^{(r^2+r-2)/2}$. In fact, arguing as in Examples \ref{ex:c.i} and \ref{ex:g(1,r+1)}, we get
\[
M=\left(
\begin{array}{ccc}
\begin{array}{c}
\binom{r+1}{2}\times\binom{r+1}{3}\\
\text{Koszul matrix $K$}\\
\text{in $r+1$ variables $\underline{x}$}
\end{array}
&
\begin{array}{c}
\binom{r+1}{2}\times 1\\
\text{matrix of}\\
\text{linear forms}
\end{array}
&
\begin{array}{c}
\binom{r+1}{2}\times (r-2)\\
\text{matrix of}\\
\text{constants}
\end{array}
\end{array}
\right)
\]
and it is easy to see from the syzygies of the ideal that $\Phi$ is a birational transformation and that $Z$ is contained in a linear section as above. Having the same dimension, they coincide. For generic $u$, the $r-2$ hyperplanes are general and hence $Z$ is smooth. So we obtain a special birational transformation if and only if $r\in\{3,4,5\}$ by Theorem \ref{thm:kleiman} and Remark \ref{rem:porteous} (cf. Theorems \ref{thm:n=1}(V), \ref{thm:n=2}(VIII) and \ref{thm:n=3 r=5}(II), respectively).
\end{example}

Before presenting the next example, we need to recall some known facts (see \cite{e-sb}, pp.~799--800). Let $A$ denote the vector space of skew-symmetric matrices $(r+1)\times(r+1)$, and let $A_k:=\{a\in A\mid\rk(a)\leq k\}$. The rank of a skew-symmetric matrix is an even number, and if $k$ is even then $A_k$ has codimension $\binom{r+1-k}{2}$ in $A$ and $\sing(A_k)=A_{k-2}$. Let $E:=\Lambda^2(T_{\p^r}(-1))\cong\Omega^{r-2}_{\p^r}(r-1)$. Then $E$ is
generated by its global sections, that can be identified with $A$. Let $p:\p(E)\to\p^r$ and $q:\p(E)\to\p(A)$ denote the projection maps, with $q$ given by the tautological line bundle on $\p(E)$. We can distinguish two cases. If $r=2t$ then $s_{2t}(E)=1$ and $q:\p(E)\to\p(A)$ is a birational map. On the other hand, if $r=2t-1$ then $s_{2t-1}(E)=0$ and $q(\p(E))=:{\mathcal Z}\subset\p(A)$ is the Pfaffian hypersurface of degree $t$, corresponding to $A_{r-1}$, whose singular locus, corresponding to $A_{r-3}$, has codimension $6$ in $\p(A)$. Now consider the resolution
$$0\to\O_{\p^r}^{\oplus (r^2-r-2)/2}\stackrel{u}\to\Lambda^2(T_{\p^r}(-1))\to {\mathcal I}_{\bar X}(r-1)\to 0$$
corresponding to a morphism $u$. If $r=2t$ then a basis of $H^0({\mathcal I}_{\bar X}(r-1))$ gives a Cremona transformation of type $(2t-1,t)$ which is special for generic $u$ only
for $t=2$ by Theorem \ref{thm:kleiman} and Remark \ref{rem:porteous} (in that case ${\bar X}\subset\p^4$ is a quintic elliptic scroll). The fundamental locus of the inverse map is given by the
intersection of the Pfaffian hypersurfaces of degree $t$ and $(r^2-r-2)/2$ hyperplanes (see \cite[p.~798]{e-sb}). On the other hand, if $r=2t-1$ then a basis of
$H^0({\mathcal I}_{\bar X}(r-1))$ gives a subhomaloidal system $\p^r\da Z\subset\p^r$, whose general fibre is a line, onto a hypersurface of degree $t$. Note that $Z\subset\p^r$ is the intersection of the Pfaffian hypersurface ${\mathcal Z}\subset\p(A)$ of degree $t$ with $(r^2-r-2)/2$ hyperplanes.

\begin{example}\label{ex:hypersurfaces}
Restricting the above resolution to a hyperplane $\p^{r-1}\subset\p^r$, we obtain the resolution
$$0\to\O_{\p^{r-1}}^{\oplus (r^2-r-2)/2}\to\Lambda^2(T_{\p^{r-1}}(-1))\oplus T_{\p^{r-1}}(-1)\to {\mathcal I}_X(r-1)\to 0$$
where $X:={\bar X}\cap\p^{r-1}$. By Proposition \ref{prop:E gg} and the above discussion, we get a birational transformation $\Phi:\p^{r-1}\da
Z\subset\p^r$ of type $(r-1,\lceil(r-1)/2)\rceil$ onto a hypersurface of degree $\lceil r/2\rceil$ having in mind that $b=\lceil(r-1)/2)\rceil$ by Proposition \ref{prop:num}, as $i=r+1-\lceil r/2\rceil$ since $Z\subset\p^r$ is a hypersurface of degree $\lceil r/2\rceil$. We remark that $\sing(\mathcal Z)\subset\p(A)$ has codimension $6$, and hence $\dim(\sing(Z))\geq r-6$. If $u$ is generic then the $(r^2-r-2)/2$ hyperplanes are general. So $Z$ is smooth if and only if $r\in\{4,5\}$, and hence $\Phi:\p^{r-1}\da Z\subset\p^r$ is a special birational transformation onto a prime Fano manifold by Theorem \ref{thm:kleiman} and Remark \ref{rem:porteous} (cf. Theorems \ref{thm:n=1}(V) and \ref{thm:n=2}(VII), respectively).
\end{example}

\begin{remark}\label{rem:hypersurfaces}
The resolution of Example \ref{ex:hypersurfaces} is equivalent, via Euler's exact sequence, to the resolution
$$0\to\O_{\p^{r-1}}^{\oplus (r^2-3r-2)/2}\oplus\O_{\p^{r-1}}(-1)\to\Lambda^2(T_{\p^{r-1}}(-1))\to {\mathcal I}_X(r-1)\to 0$$
And this is also equivalent to the resolution
$$0\to T_{\p^{r-1}}(-2)\oplus\O_{\p^{r-1}}(-1)\to\O_{\p^{r-1}}^{\oplus r+1}\to {\mathcal I}_X(r-1)\to 0$$
by means of the sequence $0\to T_{\p^{r-1}}(-2)\to\O_{\p^{r-1}}^{\oplus (r^2-r)/2}\to\Lambda^2(T_{\p^{r-1}}(-1))\to 0$.
\end{remark}

\begin{example}\label{ex:degree21}
Let $F:=E\oplus\O_{\p^r}(1)$, with $E=\Lambda^2(T_{\p^r}(-1))$. Let $p:\p(F)\to\p^r$ and $q:\p(F)\to\p(A\oplus\c^{r+1})$ denote the projection maps, with $q$ given by the tautological
line bundle on $\p(F)$. We point out that $q$ is a birational map onto $\mathcal Z:=q(\p(F))$ whose fundamental locus is contained in $q^{-1}(\mathcal Z\cap\p(A))$. In particular, $\sing(\mathcal Z)\subset\p(A)$. Let us compute the dimension of $\sing(\mathcal Z)$. If $r=2t$ then $\sing(\mathcal Z)$ corresponds to $A_{2t-2}$, and hence $\sing(\mathcal Z)\subset\p(A)$ has codimension $3$. On the other hand, if $r=2t-1$ then $\sing(\mathcal Z)$ corresponds to $A_{2t-4}$, and hence $\sing(\mathcal Z)\subset\p(A)$ has codimension $6$. Now we apply Proposition \ref{prop:E gg}. Consider the resolution $$0\to\O_{\p^r}^{\oplus (r^2-r)/2}\stackrel{u}\to\Lambda^2(T_{\p^r}(-1))\oplus\O_{\p^r}(1)\to {\mathcal I}_X(r)\to 0$$
corresponding to a morphism $u$. As $h^0({\mathcal I}_X(r-1))=1$, we deduce from Proposition \ref{prop:sec} that $b=1$. If $r=2t$ then we get a birational transformation $\p^r\da Z\subset\p^{2r}$ of type $(r,1)$ and $\dim(\sing(Z))\geq r-4$. On the other hand, if $r=2t-1$ then we get a birational transformation $\p^r\da Z\subset\p^{2r}$ of type $(r,1)$ and $\dim(\sing(Z))\geq r-7$. Note that $Z\subset\p^{2r}$ is the intersection of $\mathcal Z\subset\p(A\oplus\c^{r+1})$ with $(r^2-r)/2$ hyperplanes. If $u$ is generic then the $(r^2-r)/2$ hyperplanes are general. So $Z$ is smooth if and only if $r\in\{3,5\}$, and hence $\Phi:\p^r\da Z\subset\p^{2r}$ is a special birational transformation onto a prime Fano manifold by Theorem \ref{thm:kleiman} and Remark \ref{rem:porteous} (cf. Theorems \ref{thm:n=1}(VII) and \ref{thm:n=3 r=5}(III), respectively). Equivalently, ${\mathcal I}_X$ is given by the resolution
$0\to T_{\p^r}(-2)\to\O_{\p^r}^{\oplus r}\oplus\O_{\p^r}(1)\to {\mathcal I}_X(r)\to 0$.
\end{example}

\begin{remark}
If $r=3$ then $Z\subset\p^6$ is a linear section of $\mathcal Z=\g(1,4)\subset\p^9$. On the other hand, if $r=5$ then $Z\subset\p^{10}$ is a linear section of $\mathcal Z\subset\p^{20}$ and $\dim(\sing(\mathcal Z))=8$. Therefore, $Z\subset\p^{10}$ is a hyperplane section of a $6$-dimensional prime Fano manifold in $\p^{11}$ of coindex $4$ and degree $21$. To the best of the authors' knowledge, there is no reference to this manifold in the literature. Finally, for $r=4$ we get a small contraction onto $Z\subset\p^8$ as $\Sec_4(X)$ is the union of $5$ planes in $\p^4$ and not a cubic hypersurface.
\end{remark}

\begin{example}\label{ex:quadrics}
Let us collect some well-known examples of special birational transformations of $\p^r$ defined by quadric hypersurfaces:
\begin{enumerate}
\item[(i)] (see \cite[Theorem 3.8.3]{zak2}) Let $X=\p^1\times\p^{n-1}\subset\p^{2n-1}\subset\p^{2n}$ be a degenerate Segre embedding. Then a basis of $H^0({\mathcal I}_X(2))$ gives a special birational transformation $\Phi:\p^{2n}\da\g(1,n+1)\subset\p^{(n^2+3n)/2}$ of type $(2,1)$.
\item[(ii)] (see \cite[Theorem 3.8.5]{zak2}) Let $X=\g(1,4)\subset\p^9\subset\p^{10}$ be a degenerate embedding. Then a basis of $H^0({\mathcal I}_X(2))$ gives a special birational transformation $\Phi:\p^{10}\da S_4\subset\p^{15}$ of type $(2,1)$ onto the $10$-dimensional spinor variety.
\item[(iii)] (see \cite{semple}) Let $X\subset\p^{2n+2}$ be a rational normal scroll. Then a basis of $H^0({\mathcal I}_X(2))$ gives a special birational transformation $\Phi:\p^{2n+2}\da\g(1,n+2)\subset\p^{(n^2+5n+4)/2}$ of type $(2,2)$.
\item[(iv)] (see \cite[Theorem 2.6]{e-sb}) Let $X\subset\p^r$ be a Severi variety. Then a basis of $H^0({\mathcal I}_X(2))$ gives a special Cremona transformation $\Phi:\p^r\da\p^r$ of type $(2,2)$. So the restriction to a general hyperplane gives a special bitrational transformation $\Phi:\p^{r-1}\da Z\subset\p^r$ of type $(2,2)$ onto a smooth quadric hypersurface.
\end{enumerate}
\end{example}

\begin{remark}\label{rem:cremona}
The series of Cremona transformations that appear in this paper are:
\begin{enumerate}
\item[(i)] The Cremona transformation $\Phi:\p^{2n+2}\da\p^{2n+2}$ given by the quadric hypersurfaces containing a generic elliptic scroll of degree $2n+3$ (see \cite{s-t2}).
\item[(ii)] The Cremona transformation $\Phi:\p^r\da\p^r$ given by $r\times r$-minors of an $r\times (r+1)$-matrix of linear forms (see \cite{e-sb}).
\item[(iii)] The Cremona transformations $\Phi:\p^{kd}\da\p^{kd}$ given by forms of degree $d$ described in \cite[Section 5]{h-k-s}.
\end{enumerate}
\end{remark}

\bibliography{bibfile}

\providecommand{\bysame}{\leavevmode\hbox to3em{\hrulefill}\thinspace}
\providecommand{\MR}{\relax\ifhmode\unskip\space\fi MR }
% \MRhref is called by the amsart/book/proc definition of \MR.
\providecommand{\MRhref}[2]{%
  \href{http://www.ams.org/mathscinet-getitem?mr=#1}{#2}
}
\providecommand{\href}[2]{#2}
\begin{thebibliography}{10}

\bibitem{alzati-sierra}
A.~Alzati and J.~C. Sierra, \emph{Quadro-quadric special birational
  transformations of projective spaces}, Int. Math. Res. Notices (2013),
  doi:10.1093/imrn/rnt173, 23 pp.

\bibitem{aure}
A.~B. Aure, \emph{The smooth surfaces in {${\bf P}\sp 4$} without apparent
  triple points}, Duke Math. J. \textbf{57} (1988), 423--430.

\bibitem{a-r}
A.~B. Aure and K.~Ranestad, \emph{The smooth surfaces of degree {$9$} in {${\bf
  P}\sp 4$}}, Complex projective geometry ({T}rieste, 1989/{B}ergen, 1989),
  London Math. Soc. Lecture Note Ser., vol. 179, Cambridge Univ. Press,
  Cambridge, 1992, pp.~32--46.

\bibitem{bei}
A.~A. Beilinson, \emph{Coherent sheaves on {${\bf P}\sp{n}$} and problems in
  linear algebra}, Funktsional. Anal. i Prilozhen. \textbf{12} (1978), 68--69.

\bibitem{b-s-s}
M.~C. Beltrametti, M.~Schneider, and A.~J. Sommese, \emph{Threefolds of degree
  {$9$} and {$10$} in {${\bf P}\sp 5$}}, Math. Ann. \textbf{288} (1990),
  413--444.

\bibitem{b-s-s2}
\bysame, \emph{Threefolds of degree {$11$} in {${\bf P}\sp 5$}}, Complex
  projective geometry ({T}rieste, 1989/{B}ergen, 1989), London Math. Soc.
  Lecture Note Ser., vol. 179, Cambridge Univ. Press, Cambridge, 1992,
  pp.~59--80.

\bibitem{b-o-s-s2}
R.~Braun, G.~Ottaviani, M.~Schneider, and F.~O. Schreyer, \emph{Classification
  of conic bundles in {${\bf P}\sb 5$}}, Ann. Scuola Norm. Sup. Pisa Cl. Sci.
  (4) \textbf{23} (1996), 69--97.

\bibitem{chang}
M.~Chang, \emph{Characterization of arithmetically {B}uchsbaum subschemes of
  codimension {$2$} in {${\bf P}\sp n$}}, J. Differential Geom. \textbf{31}
  (1990), 323--341.

\bibitem{c-k}
B.~Crauder and S.~Katz, \emph{Cremona transformations with smooth irreducible
  fundamental locus}, Amer. J. Math. \textbf{111} (1989), 289--307.

\bibitem{c-k2}
\bysame, \emph{Cremona transformations and {H}artshorne's conjecture}, Amer. J.
  Math. \textbf{113} (1991), 269--285.

\bibitem{d-p}
W.~Decker and S.~Popescu, \emph{On surfaces in {${\bf P}\sp 4$} and {$3$}-folds
  in {${\bf P}\sp 5$}}, Vector bundles in algebraic geometry ({D}urham, 1993),
  London Math. Soc. Lecture Note Ser., vol. 208, Cambridge Univ. Press,
  Cambridge, 1995, pp.~69--100.

\bibitem{ede}
G.~Edelmann, \emph{{$3$}-folds in {$\bold P\sp 5$} of degree {$12$}},
  Manuscripta Math. \textbf{82} (1994), 393--406.

\bibitem{e-sb}
L.~Ein and N.~Shepherd-Barron, \emph{Some special {C}remona transformations},
  Amer. J. Math. \textbf{111} (1989), 783--800.

\bibitem{fuj2}
T.~Fujita, \emph{Projective threefolds with small secant varieties}, Sci.
  Papers College Gen. Ed. Univ. Tokyo \textbf{32} (1982), 33--46.

\bibitem{fuj}
\bysame, \emph{Classification theories of polarized varieties}, London
  Mathematical Society Lecture Note Series, vol. 155, Cambridge University
  Press, Cambridge, 1990.

\bibitem{ful}
W.~Fulton, \emph{Intersection theory}, 2nd ed., Ergebnisse der Mathematik und
  ihrer Grenzgebiete. 3. Folge. A Series of Modern Surveys in Mathematics,
  vol.~2, Springer-Verlag, Berlin, 1998.

\bibitem{g-h}
Ph. Griffiths and J.~Harris, \emph{Residues and zero-cycles on algebraic
  varieties}, Ann. of Math. (2) \textbf{108} (1978), 461--505.

\bibitem{g-p}
L.~Gruson and Ch. Peskine, \emph{Genre des courbes de l'espace projectif},
  Algebraic geometry ({P}roc. {S}ympos., {U}niv. {T}roms\o, {T}roms\o, 1977),
  Lecture Notes in Math., vol. 687, Springer, Berlin, 1978, pp.~31--59.

\bibitem{h-s}
A.~Holme and M.~Schneider, \emph{A computer aided approach to codimension {$2$}
  subvarieties of {${\bf P}\sb n,\;n \geqq 6$}}, J. Reine Angew. Math.
  \textbf{357} (1985), 205--220.

\bibitem{h-k-s}
K.~Hulek, S.~Katz, and F.~O. Schreyer, \emph{Cremona transformations and
  syzygies}, Math. Z. \textbf{209} (1992), 419--443.

\bibitem{ion}
P.~Ionescu, \emph{Embedded projective varieties of small invariants}, Algebraic
  geometry, {B}ucharest 1982 ({B}ucharest, 1982), Lecture Notes in Math., vol.
  1056, Springer, Berlin, 1984, pp.~142--186.

\bibitem{ion3}
\bysame, \emph{Embedded projective varieties of small invariants. {III}},
  Algebraic geometry ({L}'{A}quila, 1988), Lecture Notes in Math., vol. 1417,
  Springer, Berlin, 1990, pp.~138--154.

\bibitem{katz}
S.~Katz, \emph{The cubo-cubic transformation of {${\bf P}\sp 3$} is very
  special}, Math. Z. \textbf{195} (1987), 255--257.

\bibitem{kleiman}
S.~L. Kleiman, \emph{Geometry on {G}rassmannians and applications to splitting
  bundles and smoothing cycles}, Inst. Hautes \'Etudes Sci. Publ. Math. (1969),
  no.~36, 281--297.

\bibitem{k-o}
S.~Kobayashi and T.~Ochiai, \emph{Characterizations of complex projective
  spaces and hyperquadrics}, J. Math. Kyoto Univ. \textbf{13} (1973), 31--47.

\bibitem{lanteri}
A.~Lanteri, \emph{On the existence of scrolls in {${\bf P}\sp{4}$}}, Atti
  Accad. Naz. Lincei Rend. Cl. Sci. Fis. Mat. Natur. (8) \textbf{69} (1980),
  223--227 (1981).

\bibitem{le-barz}
P.~Le~Barz, \emph{Formules pour les multis\'ecantes des surfaces}, C. R. Acad.
  Sci. Paris S\'er. I Math. \textbf{292} (1981), 797--800.

\bibitem{muk}
S.~Mukai, \emph{Biregular classification of fano $3$-folds and fano manifolds
  of coindex $3$}, Proc. Nat. Acad. Sci. U.S.A. \textbf{86} (1989), 3000--3002.

\bibitem{oko}
Ch. Okonek, \emph{Fl\"achen vom {G}rad {$8$} im {${\bf P}\sp 4$}}, Math. Z.
  \textbf{191} (1986), 207--223.

\bibitem{ott}
G.~Ottaviani, \emph{On $3$-folds in {${\bf P}\sp 5$} which are scrolls}, Ann.
  Scuola Norm. Sup. Pisa Cl. Sci. (4) \textbf{19} (1992), 451--471.

\bibitem{pan}
I.~Pan, \emph{On cremona transformations of {${\bf P}\sp 3$} which factorize in
  a minimal form}, to appear in Rev. Un. Mat. Argentina, {a}rXiv:1104.4986v2
  [math.AG].

\bibitem{p-s}
Ch. Peskine and L.~Szpiro, \emph{Liaison des vari\'et\'es alg\'ebriques. {I}},
  Invent. Math. \textbf{26} (1974), 271--302.

\bibitem{predonzan}
A.~Predonzan, \emph{Intorno agli {$S\sb k$} giacenti sulla variet\`a
  intersezione completa di pi\`u forme}, Atti Accad. Naz. Lincei. Rend. Cl.
  Sci. Fis. Mat. Nat. (8) \textbf{5} (1948), 238--242.

\bibitem{th-ranestad}
K.~Ranestad, \emph{On smooth surfaces of degree {$10$} in the projective
  fourspace}, Ph.D. thesis, University of Oslo, 1988.

\bibitem{semple}
J.~G. Semple, \emph{On representations of the ${S}_k$'s of ${S}_n$ and of the
  {G}rassmann manifolds ${G}(k,n)$}, Proc. Lond. Math. Soc. (2) \textbf{32}
  (1931), 200--221.

\bibitem{s-t1}
J.~G. Semple and J.~A. Tyrrell, \emph{Specialization of {C}remona
  transformations}, Mathematika \textbf{15} (1968), 171--177.

\bibitem{s-t2}
\bysame, \emph{The {C}remona transformation of {$S\sb{6}$} by quadrics through
  a normal elliptic septimic scroll {$\sp{1}R\sp{7}$}}, Mathematika \textbf{16}
  (1969), 89--97.

\bibitem{s-t3}
\bysame, \emph{The {$T\sb{2,4}$} of {$S\sb{6}$} defined by a rational surface
  {$\sp{3}F\sp{8}$}}, Proc. London Math. Soc. (3) \textbf{20} (1970), 205--221.

\bibitem{severi}
F.~Severi, \emph{Intorno ai punti doppi impropri di una superficie generale
  dello spazio a quattro dimensioni, e a' suoi punti tripli apparenti}, Rend.
  Circ. Mat. Palermo \textbf{15} (1901), 33--51.

\bibitem{zak2}
F.~L. Zak, \emph{Tangents and secants of algebraic varieties}, Translations of
  Mathematical Monographs, vol. 127, American Mathematical Society, Providence,
  RI, 1993.

\end{thebibliography}
\bibliographystyle{amsplain}

\end{document}